\documentclass[11pt]{article} 

\usepackage{amsmath,amssymb,amsfonts,mathabx,tikz,mathrsfs,float,amsthm,enumitem} 
\usepackage[all]{xy}

\setcounter{topnumber}{1}

\setlength{\parindent}{0pt} 

\addtolength{\textheight}{1.2in}  
\addtolength{\topmargin}{-.6in}
\addtolength{\textwidth}{1.5in} 
\addtolength{\oddsidemargin}{-.75in}
\addtolength{\evensidemargin}{-.75in}  


\numberwithin{equation}{section} 
\newtheorem{thm}{Theorem}[section]
\newtheorem{prp}[thm]{Proposition} 
\newtheorem{lmm}[thm]{Lemma}  

\newtheorem{cnj}[thm]{Conjecture}

\theoremstyle{remark}

\theoremstyle{definition}
\newtheorem{dfn}[thm]{Definition}
\newtheorem{eg}[thm]{Example}

\def\BE#1{\begin{equation}\label{#1}}  
\def\EE{\end{equation}}

\def\ti#1{\widetilde{#1}} 
\def\e_ref#1{(\ref{#1})} 
\def\lan{\langle} \def\ran{\rangle} 
\def\lr#1{\lan#1\ran} 
\def\llrr#1{\lan\!\lan#1\ran\!\ran}
\def\blr#1{\big\lan#1\big\ran}

\def\lra{\longrightarrow}
\def\sf#1{\textsf{#1}}
\def\ov#1{\overline{#1}}
\def\un#1{\underline{#1}}  
\def\wh#1{\widehat{#1}}
\def\wch#1{\widecheck{#1}}

\def\al{\alpha}
\def\be{\beta}
\def\ga{\gamma}
\def\de{\delta}
\def\ep{\epsilon} 
\def\io{\iota}
\def\ka{\kappa}
\def\la{\lambda}
\def\na{\nabla}
\def\om{\omega}
\def\si{\sigma}

\def\up{\upsilon}
\def\vt{\vartheta}
\def\ve{\varepsilon}
\def\vp{\varpi}
\def\vph{\varphi}

\def\ze{\zeta}

\def\A{\mathcal A}

\def\C{\mathbb C}

\def\cC{\mathcal C}

\def\cD{\mathcal D}
\def\fd{\mathfrak m}
\def\bE{\mathbb E}

\def\F{\mathcal F}
\def\fF{\mathfrak F}

\def\g{\mathfrak g}

\def\cJ{\mathcal J}

\def\M{\mathfrak M}
\def\fM{\mathfrak M}
\def\cM{\mathcal M}
\def\cN{\mathcal N}

\def\O{\mathcal O}
\def\Q{\mathbb Q}

\def\P{\mathbb P}
\def\cP{\mathcal P}

\def\R{\mathbb R}

\def\T{\mathcal T}
\def\ft{\mathfrak t}
\def\bT{\mathbb T}
\def\sT{\mathscr T}

\def\fU{\mathfrak U}
\def\X{\mathfrak X}

\def\cZ{\mathcal Z}
\def\Z{\mathbb Z}

\def\De{\Delta}
\def\Ga{\Gamma}
\def\La{\Lambda}
\def\Om{\Omega}
\def\Si{\Sigma}
\def\Th{\Theta}

\def\fc{\mathfrak c}
\def\d{\mathfrak d}
\def\f{\mathbf f}
\def\fI{\mathfrak i}
\def\fJ{\mathfrak j}
\def\u{\mathbf u}
\def\x{\mathbf x}
\def\fR{\mathfrak R}

\def\ff{\mathfrak f}
\def\fs{\mathfrak s}

\def\i{\infty}
\def\prt{\partial}
\def\dbar{\bar\partial}
\def\eset{\emptyset} 
\def\0{\mathbf 0}

\def\tn{\textnormal}
\def\tnd{\textnormal d}
\def\bfe{\mathbf e}

\def\E{\textnormal E}
\def\V{\textnormal V}
\def\GW{\textnormal{GW}}

\def\Aut{\tn{Aut}}

\def\Cntr{\textnormal{Cntr}}

\def\Edg{\textnormal{Edg}}

\def\ev{\textnormal{ev}}

\def\id{\textnormal{id}}

\def\Im{\textnormal{Im}}

\def\Obs{\tn{Obs}}

\def\reg{\textnormal{reg}}

\def\rk{\textnormal{rk}}
\def\Sym{\tn{Sym}}
\def\sgn{\textnormal{sgn}}

\def\st{\textnormal{s.t.}}
\def\v{\textnormal{w}}
\def\val{\tn{val}}
\def\Ver{\textnormal{Ver}}
\def\vir{\textnormal{vir}}

\def\tne{{e}}

\def\bc{\mathbf c}

\def\bu{\bullet}

\def\inn{\!\in\!}
\def\eq{\!=\!}

\begin{document}

\title{Lower Bounds for Enumerative Counts\\
of Positive-Genus Real Curves}

\author{Jingchen Niu\thanks{Partially supported by DMS Grants 0846978 and 1500875} 
~and Aleksey Zinger\thanks{Partially supported by DMS Grant 1500875 and MPIM}}
\date{\today}

\maketitle

\begin{abstract}
\noindent
We transform the positive-genus real  
Gromov-Witten invariants of many real-orientable symplectic threefolds
into signed counts of curves.
These integer invariants provide lower bounds for counts of real curves 
of a given genus that pass through conjugate pairs of constraints.
We conclude with some implications and related conjectures 
for one- and two-partition Hodge integrals.
\end{abstract}

\tableofcontents

\section{Introduction}
\label{intro_sec}

\noindent
Gromov-Witten invariants of a symplectic manifold~$(X,\om)$
are counts of $J$-holomorphic curves in~$X$; they are usually rational numbers.
However, it is well-known that the genus~0 Gromov-Witten invariants of
Fano manifolds are precisely counts of rational curves;
this observation is key to enumerating rational curves in the complex projective space~$\P^n$ 
in \cite[Section~5]{KoM} and \cite[Section~10]{RT}.
Positive-genus Gromov-Witten invariants of many symplectic threefolds 
are transformed into integer counts of $J$-holomorphic curves 
in~\cite{FanoGV} in the process of establishing the ``Fano" case of
the Gopakumar-Vafa prediction of \cite[Conjecure~2(i)]{P2}.
In this paper,  we obtain a similar result for the real Gromov-Witten invariants 
defined in~\cite{RealGWsI}; see Theorem~\ref{main_thm}.
The resulting integer invariants count genus~$g$ real curves with certain signs
and thus provide lower bounds for the actual counts of such curves.
For example, we find that there are at least~10 genus~2 degree~7 real curves
passing through 7~general pairs of conjugate points in~$\P^3$
and at least~40 genus~5 degree~8 real curves
passing through 8~general pairs of conjugate points in~$\P^3$.\\

\noindent 
A \sf{real symplectic manifold} is a triple $(X,\om,\phi)$ consisting 
of a symplectic manifold~$(X,\om)$ and an \sf{anti-symplectic} involution~$\phi$ such that 
$\phi^*\om\!=\!-\om$. 
A symmetric surface $(\Si,\si)$ is a closed connected oriented, possibly nodal, 
surface~$\Si$ ($\dim_{\R}\Si\!=\!2$) with an orientation-reversing involution~$\si$. 
The fixed locus of $\si$ is a disjoint union of circles. 
A \sf{real map}
$$u:(\Si,\si)\lra(X,\phi) $$
is a smooth map $u:\Si\!\lra\!X$ such that $u\!\circ\!\si\eq\phi\!\circ\!u$. 
Let $\cJ_\om$ be the space of $\om$-compatible almost complex structures on $X$ and define
$$\cJ^\phi_\om=\big\{J\!\in\!\cJ_{\om}\!:\,\phi^*J\!=\!-J\big\}.$$ 
For $g,l\!\in\!\Z^{\ge 0}$, $B\!\in\!H_2(X;\Z)$, and $J\!\in\!\cJ_\om^\phi$, 
we denote by $\ov\M_{g,l}(X,B;J)^{\phi}$ the moduli space of equivalence classes 
of stable degree~$B$ $J$-holomorphic real maps from genus~$g$ symmetric 
(possibly nodal) surfaces with $l$~pairs of conjugate marked points;
see \cite[Section~1.1]{RealGWsI}.
For each $i\!=\!1,\ldots,l$, let 
$$\ev_i\!: \ov\fM_{g,l}(X,B;J)^{\phi}\lra X, \qquad
\big[u,(z_1^+,z_1^-),\ldots,(z_l^+,z_l^-)\big]\lra u(z_i^+),$$
be the evaluation at the first point in the $i$-th pair of conjugate points.\\

\noindent
Suppose $(X,\om,\phi)$ is a compact real symplectic threefold.
By \cite[Theorem~1.3]{RealGWsI}, a real orientation on $(X,\om,\phi)$ 
in the sense of \cite[Definition~1.2]{RealGWsI} determines an orientation on
the moduli space $\ov\fM_{g,l}(X,B;J)^{\phi}$ and endows~it
with a virtual class of dimension
\BE{virdim_e}
\dim_\R\big[\ov\M_{g,l}(X,B;J)^\phi\big]^{\vir}= c_1(B)+2l,
\EE
where $c_1(B)\!\equiv\!\lr{c_1(TX),B}$.
If $\mu_1,\ldots,\mu_l\!\in\!H^*(X;\Z)$, let
\BE{GWdfn_e}
\GW_{g,B}^{X,\phi}\big(\mu_1,\ldots,\mu_l\big) \equiv 
\bigg\lan\prod_{i=1}^l
\ev_i^*\mu_i,\big[\ov\M_{g,l}(X,B;J)^\phi\big]^{\vir}\bigg\ran\EE
be the corresponding \sf{genus~$g$ degree~$B$ real  GW-invariant} of~$(X,\om,\phi)$.
By~\e_ref{virdim_e}, this number is zero unless
\BE{dimcond_e}
\sum_{i=1}^l\dim_\R\mu_i=c_1(B)+2l.\EE 
In general, this number is rational and includes contributions from lower-genus curves.\\

\noindent
In this paper, we determine the lower-genus contributions to the number~\e_ref{GWdfn_e}.
We find that the real genus~$h$ curves with $h\!<\!g$ do not contribute to this number
unless $g$ and~$h$ have the same parity; see Propositions~\ref{PrpNoContr} and~\ref{PrpOppPar}.
If  $h\!<\!g$ and $g$ and~$h$ have the same parity, the genus~$h$ curves do contribute 
to~\e_ref{GWdfn_e}; see Proposition~\ref{PrpSamPar}.
These contributions are closely related to the contributions from genus~$h$ curves 
to the genus $(g\!+\!h)/2$ GW-invariant in the complex setting computed in~\cite{P1};
see Theorem~\ref{main_thm}.
The integer invariants provided by Theorem~\ref{main_thm} extract direct lower bounds for 
the counts of real curves in~$X$ from the real GW-invariants of~\cite{RealGWsI}.\\

If $(X,\om,\phi)$ is a compact real symplectic threefold, $g,l\!\in\!\Z^{\ge 0}$, 
$B\!\in\!H_2(X;\Z)\!-\!0$, and $J\inn\cJ_\om^\phi$, let
$$\M_{g,l}^*(X,B;J)^\phi\subset\ov\M_{g,l}(X,B;J)^\phi$$
denote the subspace consisting of \sf{simple} real maps from smooth domains;
see~\cite[Section~2.5]{McS}.
If 
$$ f_i:\, Y_i\lra X,\qquad1\le i\le l, $$
are pseudocycle representatives for the Poincare duals of $\mu_1,\ldots,\mu_l$,
let
\begin{gather}
\label{MBarF_e}\begin{split}
 \ov\M_{g,\f}(X,B;J)^\phi=\Big\{\big([u,(z_i^+,z_i^-)_{i=1}^l],(y_i)_{i=1}^l\big)
\in \ov\M_{g,l}\big(X,B;J\big)^\phi\!\times\!\prod_{i=1}^l\!Y_i\!:~~&\\
u(z_i^+)\!=\!f_i(y_i)~\forall\,i&\Big\},
\end{split}\\
\notag
\M_{g,\f}^*(X,B;J)^\phi= \ov\M_{g,\f}(X,B;J)^\phi\cap
\bigg(\M_{g,l}^*(X,B;J)^\phi\times \prod_{i=1}^lY_i\bigg).
\end{gather}
If $\M_{g,\f}^*(X,B;J)^\phi$ is cut out transversely,
we denote its signed cardinality by $\E_{g,B}^{X,\phi}(J,\f)$.\\

The expected dimension of
$\ov\M_{g,l}(X,B;J)^\phi$ is independent of the genus~$g$ if the (real) 
dimension of $X$ is 6; see~\e_ref{virdim_e}.
Thus, one can mix curve counts of different genera passing through 
the same constraints.
If $c_1(B)\!<\!0$, then 
the moduli space of unmarked maps has a negative expected dimension
and all degree~$B$ GW-invariants vanish.
This leaves the ``Calabi-Yau" case, $c_1(B)\!=\!0$, and
the ``Fano" case, $c_1(B)\!>\!0$.
For $g,h\!\in\!\Z^{\ge 0}$, define $\ti C_{h,B}^X(g)\!\in\!\Q$ by
\BE{Cdfn_e} \sum_{g=0}^{\i}\ti C_{h,B}^X(g)t^{2g}
=\bigg(\frac{\sinh(t/2)}{t/2}\bigg)^{\!h-1+c_1(B)/2}\,.\EE
For example, 
\begin{equation*}\begin{split}
 &\ti C_{h,B}^X(0)\eq 1,\quad
 \ti C_{h,B}^X(1)\eq \frac{2h\!-\!2\!+\!c_1(B)}{48},\quad
 \ti C_{h,B}^X(2)\eq\frac{\big(2h\!-\!2\!+\!c_1(B)\big)
\big(5(2h\!-\!2\!+\!c_1(B))\!-\!4\big)}{23040}.
\end{split}\end{equation*}

\begin{thm}\label{main_thm}
Suppose $(X,\om,\phi)$ is a compact real-orientable symplectic threefold with a choice of real orientation, $g,l\!\in\!\Z^{\ge 0}$, $B\!\in\!H_2(X;\Z)$,
and $\mu_i\!\in\!H^*(X;\Z)$ for $1\!\le\!i\!\le\!l$ are
such that~$c_1(B)\!>\!0$ and~\e_ref{dimcond_e} is satisfied.
\begin{enumerate}[label=(\arabic*),leftmargin=*]
\item There exists a subset $\cJ^\phi_{\reg}(g,B)\!\subset\!\cJ_\om^\phi$ of the second category such that for all $h\!\le\!g$:
\begin{enumerate}[label=$\bu$,leftmargin=*]
\item the moduli space $\M_{h,l}^*(X,B;J)^\phi$ consists of regular maps;
\item for a generic choice of pseudocycle representatives
$f_i\!:Y_i\!\lra\!X$ for $\mu_i$, $\M_{h,\f}^*(X,B;J)^\phi$ is a finite set of
regular pairs $([u,(z_i^+,z_i^-)_{i=1}^l],(y_i)_{i=1}^l)$ such that $u$ is an embedding.
\end{enumerate}

\item The numbers $\E_{h,B}^{X,\phi}(\f,J)$, with $h\!\le\!g$, are independent of the choice of 
$J\!\in\!\cJ^\phi_{\reg}(g,B)$ and~$f_i$ and can thus be denoted 
$\E_{h,B}^{X,\phi}(\mu_1,\ldots,\mu_l)$.

\item With $\ti C_{h,B}^{X,\phi}(g)$ defined by~\e_ref{Cdfn_e},
\BE{FanoGV_e} 
\GW_{g,B}^{X,\phi}\big(\mu_1,\ldots,\mu_l\big)=
\sum_{\begin{subarray}{c}0\le h\le g\\ g-h\in 2\Z\end{subarray}}\!\!
\ti C_{h,B}^X\big(\tfrac{g-h}{2}\big) \E_{h,B}^{X,\phi}\big(\mu_1,\ldots,\mu_l\big).\EE

\end{enumerate}
\end{thm}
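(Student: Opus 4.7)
My plan is to treat parts (1) and (2) by standard transversality and cobordism arguments for simple real maps, and then to derive the expansion~\e_ref{FanoGV_e} in part (3) by stratifying $\ov\M_{g,l}(X,B;J)^\phi$ according to the arithmetic genus of the non-contracted subcurve of each stable real map, reducing the nontrivial enumerative analysis to Propositions~\ref{PrpNoContr}, \ref{PrpOppPar}, and~\ref{PrpSamPar}.

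For part (1), I would take $\cJ^\phi_\reg(g,B)\!\subset\!\cJ^\phi_\om$ to be the set of $J$ for which the linearized $\dbar$-operator is surjective at every simple real $J$-holomorphic map of arithmetic genus~$h\!\le\!g$ representing~$B$; that this is a second-category subset follows from standard real-transversality theory as developed in~\cite{RealGWsI} (see also~\cite{McS}). Since $\dim_\R X\eq 6$, a dimension count shows that the locus of non-embedded simple real maps has positive codimension, so intersecting with generic pseudocycle representatives of the $\mu_i$ forces $\M^*_{h,\f}(X,B;J)^\phi$ to be a finite set of regular pairs with $u$ an embedding. For part (2), given two admissible pairs $(J_0,\f_0),(J_1,\f_1)$, a generic path between them produces a compact oriented one-dimensional cobordism of the zero-dimensional constrained moduli, yielding independence of the signed count from the choices within $\cJ^\phi_\reg(g,B)$.

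For part (3), I would stratify $\ov\M_{g,l}(X,B;J)^\phi$ by the arithmetic genus~$h\!\le\!g$ of the unique non-contracted subcurve of a stable map, which carries the full class~$B$; the complementary contracted real trees account for the genus defect $g\!-\!h$. The $h\eq g$ stratum contributes $\E^{X,\phi}_{g,B}(\mu_1,\ldots,\mu_l)$ to~\e_ref{GWdfn_e} directly, while each $h\!<\!g$ stratum contributes $\E^{X,\phi}_{h,B}(\mu_1,\ldots,\mu_l)$ multiplied by a universal factor arising as the integral of an obstruction Euler class over a moduli of real contracted curves attached to the principal component. Propositions~\ref{PrpNoContr} and~\ref{PrpOppPar} then show that this universal factor vanishes when $g\!-\!h$ is odd, and Proposition~\ref{PrpSamPar} identifies it as $\ti C^X_{h,B}((g\!-\!h)/2)$ when $g\!-\!h$ is even, yielding~\e_ref{FanoGV_e}.

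The main obstacle lies in the Hodge-type computation underlying Proposition~\ref{PrpSamPar}. In the complex analogue~\cite{FanoGV}, the ghost-component contribution reduces via~\cite{P1} to a tautological integral over $\ov\cM_{g'}$ evaluating to the coefficient of $t^{2g'}$ in $(\sinh(t/2)/(t/2))^{h-1+c_1(B)/2}$. In the real setting one must integrate the analogous Euler class over a moduli of symmetric contracted curves and carefully track the interaction of the chosen real orientation on~$(X,\om,\phi)$ with the induced anti-holomorphic involutions on the Hodge bundle and on the obstruction sheaf. The appearance of $(g\!-\!h)/2$ rather than $g\!-\!h$ as the argument of $\ti C^X_{h,B}$ should be the signature of this reduction: a real symmetric contracted curve of genus defect $2g'$ is determined by a $\si$-fundamental domain of complex genus defect~$g'$, so the real Hodge integral collapses onto its complex analogue at half the arithmetic genus.
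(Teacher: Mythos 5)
Your treatment of parts (1) and (3) follows the paper's architecture: part (1) is the standard transversality statement (the paper invokes the proof of Theorem~3.1.5 in \cite{McS} for the second-category claim and Proposition~3.2 of \cite{FanoGV} for finiteness and embeddedness), and part (3) is exactly the reduction to Propositions~\ref{PrpNoContr}--\ref{PrpSamPar} carried out at the end of Section~\ref{SecNotation}. One caveat on (3): Proposition~\ref{PrpSamPar} only produces the contribution of a single stratum $\T_I$ as a Hodge integral over $\ov\cM_{g_i,1}\!\times\!\cC$; identifying the total genus-$h$ defect contribution with $\ti C^X_{h,B}\big(\tfrac{g-h}{2}\big)$ still requires Mumford's relation $\la_{g'}^2\!=\!0$ to reduce $c_{2g'}(\pi_{g'}^*\bE^*\!\otimes\!\pi_\cC^*\cN_X\cC)$ to $-\la_{g'-1}\la_{g'}\,\pi_\cC^*c_1(\cN_X\cC)$, the degree computation $\int_\cC c_1(\cN_X\cC)=c_1(B)-(2\!-\!2h)$, the sum over partitions with the weights $|\Aut(I)|/\ell(I)!$, and the generating-function identity of \cite{P1} converting $\exp(\sum\al_{g'}t^{2g'})$ into the $\sinh$ expression. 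These steps are routine, but they are where the exponent $h-1+c_1(B)/2$ actually arises, and your proposal does not mention them.

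The genuine gap is in part (2). The cobordism you invoke is not compact: a one-parameter family of embedded genus-$h$ real curves through the constraints can Gromov-converge, as $t\!\to\!t_0$, to a stable map whose image is a curve of genus $h'\!<\!h$ with contracted components of total genus $h\!-\!h'$ attached. Because $\dim_\R X=6$, the expected dimension~\e_ref{virdim_e} is independent of the genus, so these boundary strata have virtual codimension zero (and positive actual dimension); they are present for every $t$ along a generic path and cannot be avoided by transversality. Deciding whether the one-manifold of simple smooth-domain maps actually terminates in them, and whether such terminations cancel, is precisely the degenerate-contribution analysis of Sections~\ref{SecPrps}--\ref{SecAnalytic} -- it is not free. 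The paper sidesteps this entirely: since $\ti C^X_{h,B}(0)=1$, the relation~\e_ref{FanoGV_e} is triangular, so $\E^{X,\phi}_{g,B}=\GW^{X,\phi}_{g,B}-\sum_{0\le h<g}\ti C^X_{h,B}\big(\tfrac{g-h}{2}\big)\E^{X,\phi}_{h,B}$, and the invariance of the enumerative counts follows by induction on $g$ from the deformation invariance of the real GW-invariants; that is, part (2) is a corollary of part (3) rather than an independent input. You should either adopt this route or supply the wall-crossing cancellation that your cobordism argument implicitly assumes.
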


\noindent
With $\ep\!=\!0,1$, \e_ref{FanoGV_e} gives
\BE{lowgenusGV_e}\begin{split}
\GW_{\ep,B}^{X,\phi}\big(\un\mu\big)&=\E_{\ep,B}^{X,\phi}\big(\un\mu\big),\quad
\GW_{2+\ep,B}^{X,\phi}(\un\mu)=\E_{2+\ep,B}^{X,\phi}\big(\un\mu\big)
+\frac{c_1(B)\!-\!2\!+\!2\ep}{48}\,\E_{\ep,B}^{X,\phi}\big(\un\mu\big),\\
\GW_{4+\ep,B}^{X,\phi}\big(\un\mu\big)&=\E_{4+\ep,B}^{X,\phi}\big(\un\mu\big)
+\frac{c_1(B)\!+\!2\!+\!2\ep}{48}\,\E_{2+\ep,B}^{X,\phi}\big(\un\mu\big)\\
&\hspace{1.5in}+\frac{(c_1(B)\!-\!2+2\ep)(5c_1(B)\!-\!14+10\ep)}{23040}\,\E_{\ep,B}^{X,\phi}\big(\un\mu\big),
\end{split}\EE
where $\un\mu\!=\!(\mu_1,\ldots,\mu_l)$.
Different of the $g\eq 0$ case of~\e_ref{lowgenusGV_e} go back to \cite{Wel4,Wel6,Ge2,Teh};
the $g\eq 1$ case is~\cite[Theorem 1.4]{RealGWsII}.\\

\noindent
The analogue of Theorem~\ref{main_thm} in the complex setting is 
\cite[Theorem~1.5]{FanoGV};
it establishes the ``Fano" case of Gopakumar-Vafa prediction of \cite[Conjecure~2(i)]{P2}.
The GW- and enumerate invariants are then related by the formula
\BE{FanoGVC_e} 
\GW_{g,B}^{X}\big(\mu_1,\ldots,\mu_l\big)=
\sum_{h=0}^gC_{h,B}^X(g\!-\!h)\E_{h,B}^{X}\big(\mu_1,\ldots,\mu_l\big),\EE
with the coefficients defined~by
$$\sum_{g=0}^{\i} C_{h,B}^X(g)t^{2g}
=\bigg(\frac{\sin(t/2)}{t/2}\bigg)^{\!2h-2+c_1(B)}\,.$$
The relation~\e_ref{FanoGVC_e} is also invertible and determines the complex enumerative invariants from  
the GW-invariants.\\

The standard conjugation on $\P^3$ is given by
$$\tau_4:\, \P^3\lra\P^3,\quad
 [Z_1,\ldots,Z_4]\mapsto[\Bar Z_1,\ldots,\Bar Z_4].$$
Let $\ell\inn H_2(\P^3;\Z)$ denote the homology class of a line.
By~\cite[Theorem 1.6]{RealGWsII} and~\e_ref{FanoGV_e},
$\E_{g,d\ell}^{\P^3,\tau_4}\eq 0$ if $d\!-\!g\inn 2\Z$.
In general, the genus~$g$ real  GW-invariants with conjugate pairs of 
constraints can be computed using the virtual equivariant localization theorem of~\cite{GP};
see~\cite[Section~4.2]{RealGWsIII}.
The standard complex structure $J_{\P^3}$ is regular in the sense of Theorem~\ref{main_thm}
and \cite[Theorem~1.5]{FanoGV}
if $d\!\ge\!2g\!-\!1$;
it may be regular for some smaller degrees as well.
Thus, Theorem~\ref{main_thm} provides lower bounds for the number of 
positive-genus real curves in~$\P^3$.
Tables~\ref{TabInv} and~\ref{TabInvReal} 
show some complex and real GW- and enumerative invariants of~$\P^3$;
the complex numbers are obtained from~\cite{growi} and~\e_ref{FanoGVC_e}. 
The numbers in the two tables are consistent with each other as well as
with the Castelnuovo bounds.\\

\begin{table}[htp]
\renewcommand{\arraystretch}{1.5} \renewcommand{\tabcolsep}{0.2cm}
\begin{center}
\begin{tabular}{|c|c|c|c|c|c|c|c|c|} 
\hline $d$&1&2&3&4&5&6&7&8\\
\hline $\GW_{0,d}$&1&0&1&4&105&2576&122129&7397760\\
\hline $\GW_{1,d}$&$-\frac{1}{12}$&0&$-\frac{5}{12}$&$-\frac{4}{3}$&$-\frac{147}{4}$&$\frac{1496}{3}$&$\frac{1121131}{12}$&14028960\\
\hline $\GW_{2,d}$&$\frac{1}{360}$&0&$\frac{1}{12}$&$-\frac{1}{180}$&$-\frac{49}{8}$&$-\frac{7427}{5}$&$-\frac{4905131}{45}$&$-7022780$\\
\hline $\GW_{3,d}$&$-\frac{1}{20160}$&0&$-\frac{43}{4032}$&$\frac{103}{1080}$&$\frac{473}{64}$&$\frac{206873}{270}$&$\frac{283305113}{8640}$&$-\frac{110089487}{63}$\\
\hline $\GW_{4,d}$&$\frac{1}{1814400}$&0&$\frac{713}{725760}$&$-\frac{26813}{907200}$&$-\frac{833}{320}$&$-\frac{12355247}{56700}$&$-\frac{1332337}{34560}$&$\frac{117632950}{63}$\\
\hline $\E_{0,d}$&1&0&1&4&105&2576&122129&7397760\\
\hline $\E_{1,d}$&0&0&0&1&42&2860&225734&23276160\\
\hline $\E_{2,d}$&0&0&0&0&0&312&83790&18309660\\
\hline $\E_{3,d}$&0&0&0&0&0&11&10800&6072960\\
\hline $\E_{4,d}$&0&0&0&0&0&0&605&980100\\
\hline 
\end{tabular}
\end{center}
\caption{Complex GW- and enumerative invariants of $\P^3$ with point constraints \label{TabInv}}
\ \\
\begin{center}
\begin{tabular}{|c|c|c|c|c|c|c|c|c|} 
\hline $d$&1&2&3&4&5&6&7&8\\
\hline $\GW^\phi_{0,d}$&1&0&$-1$&0&5&0&$-85$&0\\
\hline $\GW^\phi_{1,d}$&0&0&0&$-1$&0&$-4$&0&$-1000$\\
\hline $\GW^\phi_{2,d}$&$\frac{1}{24}$&0&$-\frac{5}{24}$&0&$\frac{15}{8}$&0&$-\frac{1345}{24}$&0\\
\hline $\GW^\phi_{3,d}$&0&0&0&$-\frac{1}{3}$&0&$-3$&0&$-\frac{2840}{3}$\\
\hline $\GW^\phi_{4,d}$&$\frac{1}{1920}$&0&$-\frac{23}{1152}$&0&$\frac{43}{128}$&0&$-\frac{2475}{128}$&0\\
\hline $\GW^\phi_{5,d}$&0&0&0&$-\frac{19}{360}$&0&$-\frac{16}{15}$&0&$-\frac{1400}{3}$\\
\hline $\E^\phi_{0,d}$&1&0&$-1$&0&5&0&$-85$&0\\
\hline $\E^\phi_{1,d}$&0&0&0&$-1$&0&$-4$&0&$-1000$\\
\hline $\E^\phi_{2,d}$&0&0&0&0&0&0&$-10$&0\\
\hline $\E^\phi_{3,d}$&0&0&0&0&0&$-1$&0&$-280$\\
\hline $\E^\phi_{4,d}$&0&0&0&0&0&0&$-1$&0\\
\hline $\E^\phi_{5,d}$&0&0&0&0&0&0&0&$-40$\\
\hline 
\end{tabular}
\end{center}
\caption{Real GW- and enumerative invariants of $(\P^3,\tau_4)$ with conjugate pairs of point constraints \label{TabInvReal}}
\end{table}

The first claim of Theorem~\ref{main_thm} is standard; see the beginning of Section~\ref{SecNotation}.
The second claim follows immediately from~\e_ref{FanoGV_e} and 
$\ti C_{h,B}^X(0)\eq 1$ for all $h\inn\Z^+$.
Thus, the key part of Theorem~\ref{main_thm} is the last one.
Its analogue in the complex setting, i.e.~the last part of \cite[Theorem~1.5]{FanoGV},
is deduced~from 
\begin{enumerate}[label=$\bu$,leftmargin=*]

\item  \cite[Theorem~1.2]{FanoGV}, which compares the GW-invariants of a manifold
with the GW-invariants of a submanifold which contains all relevant curves,~and 

\item  \cite[Theorem~3]{P1}, which integrates the Euler classes of the relevant obstruction
bundles over the moduli spaces of stable maps into smooth curves.

\end{enumerate}
It should be fairly straightforward to establish the  analogue of \cite[Theorem~1.2]{FanoGV}
in the real setting using the approach of~\cite{FanoGV}.
It does not appear so straightforward to establish  the real analogue of 
\cite[Theorem~3]{P1} using the approach of~\cite{P1} though, 
as the topology of the Deligne-Mumford moduli spaces
is less well understood.\\

\noindent
We instead adapt a direct approach  to the last part of Theorem~\ref{main_thm} 
and determine the contribution of every 
(usually non-compact) stratum of the moduli space of the constrained maps to
the number~\e_ref{GWdfn_e}.
We show that only the simplest possible boundary strata in fact contribute.
The contributions of these boundary strata are given by the number of zeros 
of affine bundle maps from the associated virtual normal bundles to 
the obstruction bundles.
In the process, we establish the real analogue of \cite[Theorem~3]{P1}, 
but using a  different approach (as far as the applications of Hodge integrals~go).\\

Propositions~\ref{PrpNoContr}-\ref{PrpSamPar} in the next section
 decompose~\e_ref{GWdfn_e} into contributions 
from individual strata of the moduli space~\e_ref{MBarF_e}.
We deduce~\e_ref{FanoGV_e} from these propositions and~\cite[(1)]{P1}
at the end of Section~\ref{SecNotation}. 
In Section~\ref{SecPrps}, we describe the contribution of each curve~$\cC$ of genus~$h\!<\!g$
to~\e_ref{GWdfn_e} in terms of the deformation-obstruction complex for each stratum
of~\e_ref{MBarF_e} associated with~$\cC$; see Proposition~\ref{PrpSection}.
We obtain Propositions~\ref{PrpNoContr}-\ref{PrpSamPar}  from Proposition~\ref{PrpSection}
in Section~\ref{SecPfPrps} and establish the latter in Section~\ref{SecAnalytic}.
In Section~\ref{LowDegree_subs}, we compute  low-degree real GW-invariants of~$(\P^3,\tau_4)$ 
by equivariant localization.
As a corollary of Theorem~\ref{main_thm} and these computations,
we obtain closed formulas for one- and two-partition Hodge integrals
and formulate related conjectures;
see Proposition~\ref{Hodge_prp} and Conjectures~\ref{F1_cnj} and~\ref{F2_cnj}.\\ 

We would like to thank J.~Bryan, R.~Cavalieri, C.-C.~Liu, and A.~Oblomkov for discussions 
on Hodge integrals. 
The second author is grateful to MPIM for its hospitality while this project 
was being completed.

\section{Overview of the proof}
\label{SecNotation}

Let $(X,\om,\phi)$ be a compact real-orientable symplectic threefold and $J\inn\cJ^\phi_\om$.
If $(\Si,\si)$ is a symmetric surface,
$\fJ$ is a complex structure on $\Si$ such that $\si^*\fJ\eq-\fJ$,
and $u\!:(\Si,\si)\!\lra\!(X,\phi)$ is a real map,
we define
\begin{equation*}\begin{split}
 &\Ga(u)^{\phi,\si}\equiv\big\{\xi\inn \Ga(\Si;u^*TX)\!:\,\tnd\phi\!\circ\!\xi=\xi\!\circ\!\si\big\},\\
 &\Ga^{0,1}(u)^{\phi,\si}\equiv\big\{\eta\inn\Ga\big(\Si;(T^*\Si,\fJ)^{0,1}\!\otimes_\C\!u^*TX\big)\!:\,
 \tnd\phi\!\circ\!\eta=\eta\!\circ\!\tnd\si\big\}.
\end{split}\end{equation*}
If in addition $u$ is $(J,\fJ)$-holomorphic, let
\BE{Ddfn_e} D_u^{\phi}:\,\Ga(u)^{\phi,\si}\lra\Ga^{0,1}(u)^{\phi,\si}\EE
be the linearization of the real $\Bar\prt_J$-operator at $u$.\\

We take the subset $\cJ_{\reg}^\phi(g,B)$ to consist of~$J$ in $\cJ_\om^\phi$ 
so that $D_u^{\phi}$ is surjective for every simple real $J$-holomorphic 
map $u$ from a genus $h$ symmetric surface $(\Si,\si)$ such that
$$h\le g\qquad\tn{and}\qquad \blr{\om,u_*[\Si]}\le\blr{\om,B}. $$
By the proof of~\cite[Theorem 3.1.5]{McS}, $\cJ_{\reg}^\phi(g,B)$ 
is of the second category in $\cJ_\om^\phi$.\\

Let $g,$ $B$, and $\mu_1,\ldots,\mu_l$ be as in Theorem~\ref{main_thm},
and $J\inn\cJ^\phi_{\reg}(g,B)$.
With $\f\!\equiv\!(f_1,\ldots,f_l)$ as before, define
\BE{cMhfdfn_e}\cM_{h,\f}^{\phi}(B;J)\equiv \big\{\Im\,u\!:\,
\big([u,(z_i^+,z_i^-)_{i=1}^l],(y_i)_{i=1}^l\big)\!\in\!\M_{h,\f}^*(X,B;J)^\phi\big\}.\EE
If $\f$ is chosen generically, 
then $\M_{h,\f}^*(X,B;J)^\phi$ is cut out transversely and is 
a finite set for all $h\!\le\!g$ by~\cite[Proposition 3.2]{FanoGV};
see also~\cite[Section 1.2]{FanoGV}.
Furthermore, the collections~\e_ref{cMhfdfn_e} with $h\!\le\!g$ consist of 
smooth disjoint curves meeting the pseudocycles~$f_i$ at distinct non-real points.
Under these assumptions, we denote by $\sgn(\u)$ the sign~of 
\BE{u_e}[\u]\equiv\big([u,(z_i^+,z_i^-)_{i=1}^l],(y_i)_{i=1}^l\big)\in\M_{h,\f}^*(X,B;J)^\phi\EE
and define $\sgn_{\f}(\Im\,u)\!=\!\sgn(\u)$.\\

\noindent
Under the assumptions of the previous paragraph,
$$\ov\M_{g,\f}(X,B;J)^\phi=
\bigsqcup_{h=0}^{g}\,\bigsqcup_{\cC\in\cM_{h,\f}^{\phi}(B;J)}
\hspace{-0.25in} \ov\M_{g,\f}(\cC)^\phi,
\qquad\tn{where}\quad \ov\M_{g,\f}(\cC)^\phi\equiv\ov\M_{g,\f}(\cC,[\cC];J)^\phi.$$
The domain $(\Si_u,\si_u)$ of an element~$[\u]$ of $\ov\M_{g,\f}(\cC)^\phi$
has a unique smooth irreducible component~$\Si_{u;0}$,
called the \sf{center component}, so that $u_0\!\equiv\!u|_{\Si_{u;0}}$ is non-constant.
Each of the remaining components, called \sf{bubble components},
has at most one node in common with~$\Si_{u;0}$ 
because the elements of~\e_ref{cMhfdfn_e} are smooth curves in~$X$.
We denote by $\T_{\u}\!\equiv\!(\ft_{\u},\fd_{\u})$ the pair consisting of the topological type
of the real symmetric surface~$(\Si_u,\si_u)$  and by $\fd_{\u}$
the assignment of $l$ conjugate pairs of marked points to the irreducible components 
of~$\Si_u$.\\

\noindent
For each pair $\T\!\equiv\!(\ft,\fd)$ consisting of a topological type of  
genus~$g$ symmetric surfaces~$(\Si,\si)$ 
and an assignment of $l$ conjugate pairs of marked points to the irreducible components 
of~$\Si$, let
\BE{Cstrata_e}
 \M_{\T,\f}(\cC)^\phi\equiv\big\{[\u]\!\in\!\ov\M_{g,\f}(\cC)^\phi\!:\,\T_{\u}=\T\big\}.\EE
The subspaces $\M_{\T,\f}(\cC)^\phi$ then partition $\ov\M_{g,\f}(\cC)^\phi$ 
into smooth, but generally non-compact, strata.
We denote by $\sT_{g,l}(\cC)^\phi$ the collection of all pairs $\T\!\equiv\!(\ft,\fd)$ 
for which the space~\e_ref{Cstrata_e} is non-empty.\\

We will call a topological type $\ft$ as above \sf{basic}
if it describes nodal symmetric surfaces consisting of smooth bubble components 
attached directly to the center component.
For any topological type~$\ft$, let $\fd_0(\ft)$ be the distribution 
that assigns all marked points to the center component.
We denote by $\sT_{g,l}^*(\cC)^\phi\!\subset\!\sT_{g,l}(\cC)^\phi$ the set of all tuples 
$\T\eq\big(\ft,\fd_0(\ft)\big)$ such that $\ft$ is basic.\\

Given $J\inn\cJ^\phi_\reg(g,B)$, 
let $\A^\phi_{g,l}(J)$ be the set of $\phi$-invariant $\Bar\prt_J$-admissible perturbations~$\nu$ 
of $\Bar\prt_J$, as in \cite[Section~1]{Ge2} and \cite[Section~3.1]{RealGWsIII}.
For $\nu\inn\A^\phi_{g,l}(J)$,
we denote by $\M^*_{g,l}(X,B;J,\nu)^\phi$
the moduli space of equivalence classes 
of simple degree~$B$ $(J,\nu)$-holomorphic real maps from smooth genus~$g$ symmetric surfaces with $l$~pairs of conjugate marked points.
Let
\BE{MJnu_e}\begin{split}
\M^*_{g,\f}(X,B;J,\nu)^\phi\eq
 \Big\{\big([u,(z_i^+,z_i^-)_{i=1}^l],(y_i)_{i=1}^l\big)
\in \M^*_{g,l}\big(X,B;J,\nu\big)^\phi\!\times\!\prod_{i=1}^l\!Y_i\!:&\,\\
u(z_i^+)\!=\!f_i(y_i)~\forall\,i&\Big\}.
\end{split}\EE
If $\f$ and $\nu$ are generic, this is a compact 0-dimensional manifold.
A real orientation on $(X,\om,\phi)$ determines a sign for each element of~\e_ref{MJnu_e}.
The signed cardinality $^\pm\!|\M^*_{g,\f}(X,B;J,\nu)^\phi|$ of~\e_ref{MJnu_e}
is the  number~\e_ref{GWdfn_e}.
We establish Theorem~\ref{main_thm} by splitting~\e_ref{MJnu_e}  into contributions from
the strata~$\M_{\T,\f}(\cC)^\phi$ of $\ov\M_{g,\f}(X,B;J)^\phi$ 
as described by Definition~\ref{DfnContri} and Proposition~\ref{PrpNoContr} below.\\

\noindent
Let $\X_{g,l}(X,B)^{\phi}$ be the configuration space of equivalence classes 
of stable degree~$B$ real maps from genus~$g$ symmetric 
(possibly nodal) surfaces with $l$~pairs of conjugate marked points.
This space is topologized as in \cite[Section~3]{LiT}. 
Define 
$$\X_{g,\f}(X,B)^{\phi}=
 \Big\{\big([u,(z_i^+,z_i^-)_{i=1}^l],(y_i)_{i=1}^l\big)
\in \X_{g,l}\big(X,B\big)^\phi\!\times\!\prod_{i=1}^l\!Y_i\!:\,
u(z_i^+)\!=\!f_i(y_i)~\forall\,i\Big\}.$$

\begin{dfn}[{\cite[Definition 2.4]{LiZ}}]\label{DfnContri}
Let $(X,\om,\phi)$, $B$, $g,$ $l$, and $\f$ be as in Theorem~\ref{main_thm}
and $J\inn\cJ^\phi_{\reg}(g,B)$.
A subset $\cZ\!\subset\!\ov\M_{g,\f}(X,B;J)^\phi$ is \textsf{$\dbar_J$-regular}
if there exist $\Cntr_{\f}^{\phi}(\cZ)\!\in\!\Q$ and 
a dense open subset $\A_{\cZ}^{\phi}(J)\!\subset\!\A_{g,l}^{\phi}(J)$ with 
the following property.
For every $\nu\!\in\!\A_{\cZ}^{\phi}(J)$, there exist 
\begin{enumerate}[label=(\alph*),leftmargin=*]
\item a compact subset $K_\nu\!\subset\!\cZ$,
\item  an open neighborhood $U_\nu(K)\!\subset\!\X_{g,\f}(X,B)^{\phi}$
of each compact subset $K\!\subset\!\cZ$, and 
\item $\ep_\nu(U)\inn\R^+$ for each open subset $U$ of $\X_{g,\f}(X,B)^{\phi}$
\end{enumerate}
such that 
$$ {}^\pm\big|\M^*_{g,\f}(X,B;J,t\nu)^\phi\cap U\big|=\Cntr_\f^\phi(\cZ)
 \qquad
 \tn{if}~t\inn\big(0,\ep_\nu(U)\big),~ K_\nu\!\subset\!K\!\subset\!U\!\subset\!U_\nu(K).$$
If $\cZ\!\subset\!\ov\M_{g,\f}(X,B;J)^\phi$ is $\dbar_J$-regular,
the corresponding number $\Cntr_\f^\phi(\cZ)$ is the \sf{$\dbar_J$-contribution} 
of~$\cZ$ to~\e_ref{GWdfn_e}.
\end{dfn}

\begin{prp}\label{PrpNoContr}
Let $(X,\om,\phi)$, $B$, $g,l$, and $\f$ be as in Theorem~\ref{main_thm}, 
$J\inn\cJ^\phi_{\reg}(g,B)$, 
and $\cC$ be an element of $\cM_{h,\f}^{\phi}(B;J)$.
For each $\T\!\in\!\sT_{g,l}(\cC)^\phi$, $\M_{\T,\f}(\cC)^\phi$ is a $\dbar_J$-regular
subspace of $\ov\M_{g,\f}(X,B;J)^\phi$.
Furthermore,  $\Cntr_\f^\phi(\M_{\T,\f}(\cC)^\phi)\!=\!0$ if 
$\T\!\not\in\!\sT_{g,l}^*(\cC)^\phi$.
\end{prp}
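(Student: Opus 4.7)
The plan is to deduce Proposition~\ref{PrpNoContr} from the stratum-by-stratum analysis in Proposition~\ref{PrpSection}, which (as the overview indicates) expresses the $\dbar_J$-contribution of each $\M_{\T,\f}(\cC)^\phi$ as a signed count of zeros of an affine bundle map from the virtual normal bundle of the stratum in $\X_{g,\f}(X,B)^\phi$ into the obstruction bundle built from the deformation-obstruction complex of the stratum. The $\dbar_J$-regularity assertion and the existence of $\Cntr_\f^\phi(\M_{\T,\f}(\cC)^\phi)\!\in\!\Q$ are then immediate outputs of Proposition~\ref{PrpSection}: the zero-counting description supplies the required compact subsets $K_\nu$, the neighborhoods $U_\nu(K)$, and the error thresholds $\ep_\nu(U)$ of Definition~\ref{DfnContri}.

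The vanishing of $\Cntr_\f^\phi(\M_{\T,\f}(\cC)^\phi)$ when $\T\eq(\ft,\fd)\!\notin\!\sT_{g,l}^*(\cC)^\phi$ splits into two cases, treated independently. Suppose first that $\ft$ is non-basic, so that some bubble component is attached to another bubble rather than directly to the center $\Si_{u;0}$. Since $\cC$ is smoothly embedded, the restriction of $u$ to every bubble is constant, so the linearization $D_u^\phi$ decomposes over the dual tree of $(\Si_u,\si_u)$ and the obstruction/infinitesimal deformation spaces on the bubble subtree reduce to linear data at the attaching node $q\!\in\!\cC$. The smoothing parameters for any internal node between two bubbles then contribute directions to the virtual normal bundle on which the obstruction bundle map of Proposition~\ref{PrpSection} is independent; hence the affine section factors through a proper linear quotient and its signed count of zeros vanishes.

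Suppose instead that $\ft$ is basic but $\fd\!\neq\!\fd_0(\ft)$, so that some marked point $z_i^+$ lies on a bubble component of constant value $u(q_i)\!\in\!\cC$, where $q_i$ is the attaching node on $\Si_{u;0}$. The incidence condition $u(z_i^+)\eq f_i(y_i)$ reduces to $u(q_i)\eq f_i(y_i)$, forcing $f_i(y_i)\!\in\!\cC$. For a generic choice of $\f$, imposing this on top of the remaining incidence conditions at the center component yields a mismatch between the ranks of the virtual normal bundle and the obstruction bundle associated with the bubble stratum, as computed by the deformation-obstruction complex in Proposition~\ref{PrpSection}. The affine section therefore has no zeros, and $\Cntr_\f^\phi(\M_{\T,\f}(\cC)^\phi)\eq 0$.

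The main obstacle will be the first case: pinning down the tree-combinatorial description of the virtual normal bundle on strata with deeply nested bubble trees and verifying, compatibly with the anti-holomorphic involution, that the factorization through a proper quotient is preserved by~$\si$ so that the signs prescribed by the real orientation cancel cleanly rather than accumulate. The parallel treatment of conjugate pairs of bubbles and $\si$-invariant bubbles, each with their own sign and automorphism contributions, must be carried out uniformly in~$\T$ in order to conclude that the signed zero count is identically zero for every non-basic~$\T$.
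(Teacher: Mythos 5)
Your overall strategy coincides with the paper's: reduce everything to the stratumwise description of Proposition~\ref{PrpSection}, handle non-basic $\ft$ by showing the affine map factors through the smaller bundle of smoothing parameters (the paper's $\rho_\T^\si\!:\ti\F\T^\si\!\to\!\ti\fF\T^\si$), and then dispose of the basic strata with $\fd\!\neq\!\fd_0(\ft)$ by a separate dimension argument. The non-basic case is essentially right, though note the correct conclusion is stronger than ``the signed count vanishes'': for generic $\nu$ the factored affine map has \emph{no} zeros at all, so the perturbed moduli space is empty near the stratum (this is exactly~\e_ref{esetinter_e}), and no sign-cancellation analysis of the kind you worry about in your last paragraph is needed.

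The gap is in the basic case with $\fd\!\neq\!\fd_0(\ft)$. There is no ``mismatch between the ranks of the virtual normal bundle and the obstruction bundle'' on the stratum itself: the identity $\dim\ti\M_{\T,\f}(\cC)^\phi+\rk\ti\F\T^\si=\rk\Obs_\T^\phi$ holds for \emph{every} basic $\T$, independently of $\fd$. Moving a conjugate pair of marked points from the center to a contracted bubble adds two real dimensions to the corresponding Deligne--Mumford factor but simultaneously freezes the attaching point $z_e$ to be $z_i^\pm(\cC)$, removing two dimensions from $\cC_\T^{*,\si}$ --- a wash. So a generic affine bundle map on this stratum has finitely many zeros, and nothing in your argument forces their signed count to vanish. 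The actual mechanism is that both $\cD_\T^\phi$ and the perturbation term $\ov\nu_\T$ are pullbacks under the map $\ff$ forgetting the marked points carried by the contracted components; $\ff$ strictly drops the dimension of the base (by two per forgotten pair) while preserving the bundles and the homomorphism, so one may take the generic perturbation to be a pullback as well, and downstairs the total space of the normal bundle has dimension strictly less than the rank of the obstruction bundle, whence the affine map has empty zero locus. Without identifying this pullback structure, the vanishing $N(\cD_\T^\phi)=0$ does not follow. (Your reduction of the incidence condition to $f_i(y_i)\in\cC$ is also not a genericity obstruction: $\cC$ meets $f_i(Y_i)$ at the prescribed points by construction, and these strata are nonempty.)
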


\noindent
The implication of Proposition~\ref{PrpNoContr} is that 
\BE{ContrSplit_e}\begin{split}
\GW_{g,B}^{X,\phi}\big(\mu_1,\ldots,\mu_l\big) 
&= \sum_{h=0}^g\sum_{\cC\in\cM_{h,\f}^{\phi}(B;J)}
\sum_{\T\in\sT_{g,l}(\cC)^\phi}
\!\!\!\!\!\!\!\Cntr_\f^\phi\big(\M_{\T,\f}(\cC)^\phi\big)\\
&= \sum_{h=0}^g\sum_{\cC\in\cM_{h,\f}^{\phi}(B;J)}
\sum_{\T\in\sT_{g,l}^*(\cC)^\phi}
\!\!\!\!\!\!\!\Cntr_\f^\phi\big(\M_{\T,\f}(\cC)^\phi\big)\,.
\end{split}\EE
It thus remains to reduce the last expression in~\e_ref{ContrSplit_e} 
to the right-hand side of~\e_ref{FanoGV_e}.\\

For a tuple $I\!\equiv\!(g_1,\ldots,g_m)$ of positive integers,
let 
$$\ell(I)\equiv m,\qquad\quad|I|\equiv g_1\!+\!\ldots\!+\!g_m\,.$$
For $I\eq(g_1,\ldots,g_m)$ and $I'\eq(g'_1,\ldots,g'_{m'})$,
denote by $\ft_{I,I'}(\cC)$ the topological type of nodal symmetric surfaces consisting 
of the symmetric surface $(\cC,\phi)$ as the center component,
\begin{enumerate}[label=$\bu$,leftmargin=*]
\item $m$ pairs of smooth  surfaces of genera $g_1,\ldots,g_m$ (the same genus in each pair) attached at conjugate points of $\cC$, and
\item $m'$ smooth symmetric surfaces attached at real points of $\cC$.
\end{enumerate}
The set $\sT_{g,l}^*(\cC)^\phi$ then consists of the pairs
$$ \T_{I,I'}\equiv \Big(\ft_{I,I'}(\cC),\fd_0\big(\ft_{I,I'}(\cC)\big)\Big) $$
such that $2|I|\!+\!|I'|\eq g\!-\!h$, where $h$ is the genus of~$\cC$.

\begin{prp}\label{PrpOppPar}
Let $(X,\om,\phi)$, $B$, $g,l$, $\f$, $J$, and~$\cC$ be as in Proposition~\ref{PrpNoContr}. If
\hbox{$I\inn(\Z^+)^m$} and $I'\inn(\Z^+)^{m'}$ are such that $2|I|\!+\!|I'|\eq g\!-\!h$ and $m'\!>\!0$, 
then $\Cntr_\f^\phi(\M_{\T_{I,I'},\f}(\cC)^\phi)\eq 0$.
\end{prp}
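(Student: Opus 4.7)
The plan is to invoke Proposition~\ref{PrpSection}, which expresses $\Cntr_\f^\phi(\M_{\T_{I,I'},\f}(\cC)^\phi)$ as the signed number of zeros of an affine bundle map from the virtual normal bundle to the obstruction bundle over the stratum, and then to exhibit a sign-reversing free involution on this data whenever $m'\!>\!0$.

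A generic element of $\M_{\T_{I,I'},\f}(\cC)^\phi$ consists of the real parametrization of~$\cC$ carrying the constraint marked points, together with $m$ conjugate pairs of smooth bubbles of genera prescribed by~$I$ attached at conjugate nodes of~$\cC$, and $m'$ real bubbles of genera prescribed by~$I'$ attached at real (i.e., $\phi$-fixed) nodes of~$\cC$. The virtual normal bundle $\cN$ to this stratum inside $\ov\M_{g,\f}(X,B;J)^\phi$ contains, for each real node, a real $1$-dimensional smoothing parameter $s_j\!\in\!\R$, in contrast with the $\C$-valued smoothing parameter attached to each conjugate pair of nodes; the obstruction bundle decomposes along the center and the bubbles correspondingly.

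The crucial step is to locate the sign-reversing involution. For each of the $m'$ real nodes, negating the smoothing parameter $s_j$ produces an isomorphic real map after composing with the canonical real automorphism of the bubble that negates its tangent direction at the node. I would show, via the standard comparison of node-smoothing orientations for conjugate versus real nodes, that this $\Z/2$-action, extended to the affine bundle map $\Phi\!:\,\cN\!\to\!\Obs$, is orientation-reversing on the total space of $\cN\!\oplus\!\Obs^\vee$, so that zeros of~$\Phi$ pair off with opposite signs. Grouping the $m'$ real nodes by this involution and applying Proposition~\ref{PrpSection} then yields $\Cntr_\f^\phi(\M_{\T_{I,I'},\f}(\cC)^\phi)\eq 0$.

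The principal obstacle is the orientation calculation. The real orientation of $\ov\M_{g,\f}(X,B;J)^\phi$ from~\cite{RealGWsI} is assembled from the real-orientation data on $X^\phi$ together with gluing-orientation conventions at real and conjugate nodes, and tracking how these interact with the node-flipping involution requires careful bookkeeping of the spin-type data on the fixed locus and of how the affine bundle map from Proposition~\ref{PrpSection} transforms. Once the sign of this transformation is verified to be $-1$ at each real node (a local calculation on a neighborhood of a generic nodal map), the vanishing follows immediately from the pairwise cancellation of zeros.
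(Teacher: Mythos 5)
Your reduction of the contribution to the signed count of zeros of the affine bundle map $\up\mapsto\cD_{\T}^{\phi}(\up)+\ov\nu_{\T}(\u)$ from $\ti\F\T^{\si}$ to $\Obs_\T^{\phi}$ is the same first step as the paper's. The cancellation mechanism you propose after that, however, does not work, for two separate reasons.

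First, the involution you want to act with is not defined. The bubbles attached at the $m'$ real nodes have positive genus $g'_j$, and a generic smooth real curve of genus $g'_j\ge 2$ with one real marked point has trivial automorphism group; there is no ``canonical real automorphism of the bubble that negates its tangent direction at the node.'' (Even for $g'_j=1$ the elliptic involution need not respect the chosen real structure and marked point.) So negating the real smoothing parameter $s_j$ at a real node does not return an isomorphic stable map: it moves you to a genuinely different point of the glued family. Second, even if you only negate $s_j$ fiberwise in $\ti\F\T^{\si}$ without trying to compensate on the base, this does not pair off zeros of the relevant map: the linear part $\cD_{\T}^{\phi}$ is odd under $s_j\mapsto -s_j$ but the affine term $\ov\nu_{\T}$ is even, so $\Psi(-s_j)\ne\pm\Psi(s_j)$ and the zero set carries no induced free $\Z/2$-action. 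Your plan therefore has a genuine gap at its central step.

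The vanishing actually comes from a different source, tied to the \emph{position} of each real node on the fixed locus $\cC^{\phi}$ rather than to the sign of its smoothing parameter. Over the circle factors $(\cC^{\phi})^{\E_{\bu;\R}^{\si}(\T)}$ of the stratum, the bundles $T\cC$ and $TX$ restrict to trivial bundles (every orientable vector bundle over a circle is trivial), so the entire triple $(\ti\F\T^{\si},\Obs_\T^{\phi},\cD_{\T}^{\phi})$ is pulled back under the projection that forgets the positions of the $m'$ real nodes. The target of that projection has dimension smaller by $m'>0$ while the ranks of the two bundles are unchanged, so a generic affine bundle map downstairs has empty zero set; since the signed count is independent of the generic affine term, $N(\cD_{\T}^{\phi})=0$. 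If you want to salvage an ``involution'' picture, the correct statement is that the zero set of the pulled-back map would consist of whole $(S^1)^{m'}$-orbits rather than isolated points, which is exactly why a generic perturbation eliminates it entirely; but the clean argument is the dimension count, not a sign-reversing pairing.
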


For each $I\eq(g_1,\ldots,g_m)$,
let $\T_I\!\equiv\!\T_{I,(\,)}$ 
and denote by $\Aut(I)\!\subset\!S_m$ the stabilizer of~$I$.
For each $g'\inn\Z^+$, let 
$$\cP(g')=\big\{(g_1,\ldots,g_m)\inn(\Z^+)^m\!:\,
 g_1\!+\!\ldots\!+\!g_m\eq g',~m\inn\Z^{\ge0}\big\}.$$
By~\e_ref{ContrSplit_e} and Proposition~\ref{PrpOppPar},
\BE{ContrSplit_e2}\begin{split}
  \GW_{g,B}^{X,\phi}\big(\mu_1,\ldots,\mu_l\big) 
 =\sum_{\begin{subarray}{c}0\le h\le g\\ g-h\in 2\Z\end{subarray}}\,
 \sum_{\cC\in\cM_{h,\f}^{\phi}(B;J)}
\sum_{I\in\cP((g-h)/2)}\!\!\!\!\!\frac{|\Aut(I)|}{\ell(I)!}
\Cntr_\f^\phi\big(\M_{\T_I,\f}(\cC)^\phi\big)\,.
\end{split}\EE
The last contribution is described Proposition~\ref{PrpSamPar} below.\\

For $g'\inn\Z^+$,
we denote by $\ov\cM_{g',1}$ the Deligne-Mumford moduli space of stable 
genus~$g'$ one-marked curves.
Let 
\BE{bELdfn_e}\bE,L\lra\ov\cM_{g',1}\EE
be the Hodge vector bundle of harmonic differentials
and the universal tangent line bundle at the marked point,
respectively.
Let $\la_k\eq c_k(\bE)$, $\psi\eq c_1(L^*)$,  and
\BE{pidfn_e}\pi_{g'},\pi_\cC\!:\,\ov\cM_{g',1}\!\times\!\cC\lra\ov\cM_{g',1},\,\cC\EE
be the component projection maps.
Let
$$\al_{g'} = \int_{\ov\cM_{g',1}}\!\!\!\!\!\la_{g'-1}\la_{g'}\!
\bigg(\sum_{r=0}^{g'-1}(-1)^r\la_r\psi^{g'-1-r}\bigg).$$
For each $g_c\!\in\!\Z^{\ge0}$,  define
\BE{tiCdfn_e} 
\wh C^X_{h,B}(g_c)= 
\sum_{(g_1,\ldots,g_m)\in\cP(g_c)}\!\!\!\!\!\!\!\!\!\!
\frac{(2\!-\!2h\!-\!c_1(B))^m}{2^m m!}\prod_{i=1}^m\!\big((-1)^{g_i}\al_{g_i}\big)\,.\EE

\begin{prp}\label{PrpSamPar}
Let $(X,\om,\phi)$, $B$, $g,l$, $\f$, $J$, and~$\cC$ be as in Proposition~\ref{PrpNoContr}.
For each element $I\!\equiv\!(g_1,\ldots,g_m)$ of $\cP((g\!-\!h)/2)$,
\begin{equation*}\begin{split}
 &\Cntr_\f^\phi\big(\M_{\T_I,\f}(\cC)^\phi\big)\\
 &\qquad= \frac{\sgn_{\f}(\cC)}{2^m|\Aut(I)|}
 \prod_{i=1}^{m}\!\!\Bigg(\!\!(-1)^{g_i}\!\!
\int_{\ov\cM_{g_i,1}\times\cC}\!\!
 c_{2g_i}\big(\pi_{g_i}^*\bE^*\!\otimes\!\pi_\cC^*\cN_X\cC\big)
\bigg(\sum_{r=0}^{g_i-1}\!(-1)^r\la_r\psi^{g_i-1-r}\bigg)\!\!\Bigg)\,,
\end{split}\end{equation*}
where $\cN_X\cC$ is the normal bundle of $\cC$ in~$X$.
\end{prp}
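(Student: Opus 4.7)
The plan is to apply Proposition~\ref{PrpSection}, which expresses $\Cntr_\f^\phi(\M_{\T_I,\f}(\cC)^\phi)$ as the signed count of zeros of an affine bundle map from the virtual normal bundle of the stratum to an obstruction bundle, and to reduce that count to the Hodge integral on the right-hand side. The key point is that for the basic topological type~$\T_I$ the stratum factors as a product over the $m$ conjugate pairs of bubbles, so the section factors accordingly and the count becomes a product of integrals over $\ov\cM_{g_i,1}\!\times\!\cC$, with the center-component sign providing the overall $\sgn_\f(\cC)$, and $|\Aut(I)|$ and $2^m$ accounting respectively for permutations of equal-genus pairs and for the $\Z/2$ exchange within each conjugate pair.

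First I would unpack the stratum explicitly. An element of $\M_{\T_I,\f}(\cC)^\phi$ is determined by the (fixed) real embedding $\cC\!\hookrightarrow\!X$, an unordered conjugate pair of points on $\cC$ for each~$i$, and a constant real map from a genus-$g_i$ symmetric surface onto that pair. Since $\phi$ swaps the two bubbles in each conjugate pair, this data is equivalent, up to the $\Z/2$ exchange, to a complex one-pointed curve $[(\Si_i,p_i)]\!\in\!\ov\cM_{g_i,1}$ together with a point $p_i\!\in\!\cC$. Hence the stratum is the quotient of $\prod_i(\ov\cM_{g_i,1}\!\times\!\cC)$ by the semidirect product of $(\Z/2)^m$ (exchanging the two bubbles in each pair) with $\Aut(I)$ (permuting pairs of equal genus), and the integration over the stratum carries the prefactor $1/(2^m|\Aut(I)|)$.

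Second, I would identify the relevant pieces of the deformation-obstruction complex along this stratum. For each~$i$, the $\phi$-real part of the two conjugate smoothing parameters at the nodes is the single complex line $\pi_{g_i}^*L^*\!\otimes\!\pi_\cC^*T\cC$. Similarly, the $\phi$-real part of $\cok\,D_u$ restricted to the pair of bubbles is the complex bundle of rank~$2g_i$
$$\Obs_i\;\equiv\;\pi_{g_i}^*\bE^*\!\otimes\!\pi_\cC^*\cN_X\cC$$
on $\ov\cM_{g_i,1}\!\times\!\cC$: the $\bE^*$ factor records $H^1(\Si_i,\O_{\Si_i})$ for the bubble, while the $\cN_X\cC$ factor records the normal directions into which the bubble can be deformed, deformations along $\cC$ being already tangent to the stratum. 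The obstruction contributions from the center component vanish by the transversality assumption $J\!\in\!\cJ^\phi_{\reg}(g,B)$ at~$\cC$.

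Third, by Proposition~\ref{PrpSection} and the excess-intersection argument used in~\cite{FanoGV} and~\cite[Theorem~3]{P1}, the signed zero count factors as a product over $i\eq1,\ldots,m$ of an integral over $\ov\cM_{g_i,1}\!\times\!\cC$ of a universal polynomial in $\la_r$, $\psi$, and the Chern classes of $\cN_X\cC$. Expanding the Segre class of the smoothing line bundle and invoking the Mumford relation $c(\bE)c(\bE^*)\eq1$, as in~\cite[Theorem~3]{P1}, rewrites this polynomial as
$$c_{2g_i}\!\bigl(\pi_{g_i}^*\bE^*\!\otimes\!\pi_\cC^*\cN_X\cC\bigr)\cdot\sum_{r=0}^{g_i-1}(-1)^r\la_r\psi^{g_i-1-r},$$
up to the sign $(-1)^{g_i}$ arising from the comparison of the real orientation on $\Obs_i$ induced by the real orientation of $(X,\om,\phi)$ with its natural complex orientation. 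Multiplying these contributions over~$i$ and combining with the prefactor $\sgn_\f(\cC)/(2^m|\Aut(I)|)$ from the first step yields the claimed formula.

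The main obstacle will be step three: rigorously justifying the product splitting of the obstruction section along conjugate pairs at the level of real-oriented virtual bundles, and tracking the signs $(-1)^{g_i}$, the factors of $2^{-1}$ per pair, and $\sgn_\f(\cC)$ that emerge from the real-orientability data. The Hodge-integral manipulation itself is a direct real analogue of~\cite[Theorem~3]{P1}, but the orientation bookkeeping---comparing the real orientations on $\Obs_i$ and on the smoothing line with their natural complex ones, and matching ordered with unordered labelings of the bubble pairs---is the new ingredient and must be executed carefully from the real orientation on $(X,\om,\phi)$.
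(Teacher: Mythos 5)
Your overall strategy coincides with the paper's: apply Proposition~\ref{PrpSection} to reduce the contribution to the signed count $N(\cD_\T^\phi)$ of zeros of an affine bundle map, factor the stratum and the deformation--obstruction data as a product over the $m$ conjugate pairs of bubbles, identify each factor with a count over $\ov\cM_{g_i,1}\!\times\!\cC$ computed by the excess-intersection/Mumford-relation argument of \cite[Theorem~3]{P1} (this is exactly Lemma~\ref{NcDg_lmm} in the paper), and account for $2^m|\Aut(I)|$ via the covering group $\ti\Aut^*(\T)$. Two imprecisions in your setup are harmless but worth noting: the smoothing parameter is $\pi_{g_i}^*L\!\otimes\!\pi_\cC^*T\cC$ (tangent line, not $L^*$), and the obstruction bundle along the stratum is $\bE^*\!\otimes\!TX|_\cC$ of rank $3g_i$, not $\bE^*\!\otimes\!\cN_X\cC$ of rank $2g_i$; the reduction to $\cN_X\cC$ happens only after cutting down by a generic affine term, which is how the dimension count closes and how the factor $c_{2g_i}(\pi_{g_i}^*\bE^*\!\otimes\!\pi_\cC^*\cN_X\cC)$ enters.

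The genuine gap is the one you yourself flag: the signs. You assert that comparing the real orientation with the complex one produces $(-1)^{g_i}$ per pair, $\sgn_\f(\cC)$ overall, and the correct interaction with the factor $2^{-m}$, but you give no mechanism for establishing this, and it is not something that can be read off from the local structure of the affine map. In the paper these signs come from three separate inputs: (i) the orientation that the real orientation on $(X,\om,\phi)$ induces on the family $\cC_\T$ of maps with extra conjugate marked pairs differs from the complex orientation by $\sgn_\f(\cC)$; (ii) by \cite[Theorem~1.4]{RealGWsII}, the real orientation on the total space of the obstruction bundle over the moduli of degree-$0$ maps from $g'$-doublets differs from the complex orientation by $(-1)^{g'-1}$; and (iii) by \cite[Theorem~1.2]{RealGWsII}, the node-identifying immersion $\io_{\T,\f}$ is orientation-preserving iff $m\in2\Z$, contributing $(-1)^m$. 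Only the product of these three comparisons yields $\sgn_\f(\cC)\prod_i(-1)^{g_i}$. Without some substitute for (ii) and (iii) --- which are substantive theorems about real orientations, not bookkeeping --- the proposal cannot determine even the sign of the answer, so this step must be supplied rather than deferred.
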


We now proceed as at the end of~\cite[Section 2.3]{P1}.
Since $\la_{g'}^2\eq 0$ by~\cite[(5.3)]{Mu},
\BE{ECexpand_e}c_{2g'}\big(\pi_{g'}^*\bE^*\!\otimes\!\pi_\cC^*\cN_X\cC\big)
 =-\pi_{g'}^*\big(\la_{g'-1}\la_{g'}\big)\cdot \pi_{\cC}^*\,c_1(\cN_X\cC).\EE
Since
\BE{ECexpand_e2}\int_\cC c_1(\cN_X\cC)=c_1(B)-(2\!-\!2h),\EE
Proposition~\ref{PrpSamPar} thus gives
$$\Cntr_\f^\phi\big(\M_{\T_I,\f}(\cC)^\phi\big)
 =\sgn_{\f}(\cC)
 \frac{(2\!-\!2h\!-\!c_1(B))^m}{2^m|\Aut(I)|}
\prod_{i=1}^m\!\big((-1)^{g_i}\al_{g_i}\big)\,.$$
Combining this with~\e_ref{ContrSplit_e2} and~\e_ref{tiCdfn_e}, we obtain
\begin{equation*}\begin{split}
  \GW_{g,B}^{X,\phi}\big(\mu_1,\ldots,\mu_l\big) 
 =\sum_{\begin{subarray}{c}0\le h\le g\\ g-h\in 2\Z\end{subarray}}\,
 \sum_{\cC\in\cM_{h,\f}^{\phi}(B;J)}
\hspace{-0.25in}\wh C_{h,B}^X\big(\tfrac{g-h}{2}\big)\sgn_{\f}(\cC)
 =\sum_{\begin{subarray}{c}0\le h\le g\\ g-h\in 2\Z\end{subarray}}\!\!\!\!
\wh C_{h,B}^X\big(\tfrac{g-h}{2}\big) \E_{h,B}^{X,\phi}(\f,J)\,.
\end{split}\end{equation*}
We show below that $\wh C^X_{h,B}(g)$ satisfies~\e_ref{Cdfn_e}.
This implies~\e_ref{FanoGV_e} and  thus establishes Theorem~\ref{main_thm}.\\

By the $g\eq 0$ case of~\cite[(26)]{P1} and~\cite[(1)]{P1},
$$\exp\bigg(\sum_{g'=1}^{\i}\!\al_{g'}t^{2g'}\bigg)=\bigg(\frac{\sin(t/2)}{t/2}\bigg)^{-1}.$$
Thus, by~(\ref{tiCdfn_e}),
\begin{equation*}\begin{split}
 \sum_{g_c=0}^{\i}\!\!\wh C^X_{h,B}(g_c)t^{2g_c}&=
 \exp\bigg(\!\big(1\!-\!h\!-\!c_1(B)/2\big)\!\sum_{g'=1}^{\i}\!\al_{g'}(\fI t)^{2g'}\bigg)\\
 &=\bigg(\frac{\sin(\fI t/2)}{\fI t/2}\bigg)^{\!h-1+c_1(B)/2}
 =\bigg(\frac{\sinh(t/2)}{t/2}\bigg)^{\!h-1+c_1(B)/2}.
\end{split}\end{equation*}
Thus, $\wh C^X_{h,B}(g_c)\eq\ti C^X_{h,B}(g_c)$.

\section{Stratawise contributions}
\label{SecPrps}

Proposition~\ref{PrpSection} below relates the contribution of each stratum 
$\M_{\T,\f}(\cC)^\phi$
in Proposition~\ref{PrpNoContr} to the zeros of a bundle map from
the space~$\ti\F\T^{\si}$ of smoothing parameters
to the obstruction bundle~$\Obs_{\T}^{\phi}$.
For the purposes of Proposition~\ref{PrpSection}, we describe the structure of $\M_{\T,\f}(\cC)^\phi$
in detail in terms of graphs.

\subsection{Rooted decorated graphs}
\label{RDG_subs}

A \sf{graph} $(\Ver,\Edg)$ is pair consisting of a finite set~$\Ver$ of \sf{vertices}
and an element 
$$\Edg\in \Sym^m(\Sym^2\Ver)$$ 
for some $m\!\in\!\Z^{\ge0}$.
We will view~$\Edg$ as a collection of two-element subsets of~$\Ver$, called \sf{edges},
but some of these subsets may contain the same element of~$\Ver$ twice and 
$\Edg$ may contain several copies of the same two-element subset.
Hereafter we use $w$ and $e$ to denote vertices and edges, respectively.
\\

An \sf{$S$-marked decorated graph} or simply \sf{decorated graph}
\BE{Dual_e} \T=\big(\Ver,\Edg,S,\g,\fd\big) \EE
consists of a graph $(\Ver,\Edg)$, a finite set $S$, and~maps
$$\g\!:\,\Ver\lra\Z^{\ge 0}\qquad\hbox{and}\qquad \fd\!:\,S\lra\Ver.$$
We define \sf{the arithmetic genus $g_a(\T)$ of~$\T$} as in~\e_ref{Dual_e}~by
$$g_a(\T)= p_a+\sum_{w\in\Ver}\!\g(w),$$
where $p_a$ is the arithmetic genus of the graph $(\Ver,\Edg)$.
For each $w\!\in\!\Ver$, let
$$\val_w(\T)=2\g(w)+\big|\fd^{-1}(w)\big|+\big|\{e\!\in\!\Edg\!:\,w\!\in\!e\big\}\big|,$$
with each $e\!=\!(w,w)$ counted twice above.
A decorated graph~$\T$ as in~\e_ref{Dual_e} is  called \sf{trivalent} if $\val_w(\T)\!\ge\!3$
for every $w\!\in\!\Ver$.\\

\noindent
A  \sf{decorated subgraph} of a decorated graph~$\T$ as in~\e_ref{Dual_e} is 
a decorated graph 
\BE{SubDual_e}\T'\eq(\Ver',\Edg',S',\g',\fd')\EE
such that $\Ver'\!\subset\!\Ver$ and
\begin{enumerate}[label=$\bu$,leftmargin=*]

\item $\Edg'\!\subset\!\Edg$ is the subcollection of the edges with both vertices in~$\Ver'$,

\item $S'$ is the disjoint union of $\fd^{-1}(\Ver')$ and 
the subcollection $\Edg'^{\bu}\!\subset\!\Edg$ of the  edges with one vertex in~$\Ver'$
and the other in $\Ver\!-\!\Ver'$, 

\item $\g'=\g|_{\Ver'}$,

\item $\fd'|_{\fd^{-1}(\Ver')}=\fd|_{\fd^{-1}(\Ver')}$ and
$\fd'(e)\!=\!w$ 
if $e\!\in\!\Edg'^{\bu}$ and $e\!\cap\!\Ver'\!=\!\{w\}$.
\end{enumerate}

Thus,
we choose the vertices $\Ver'\!\subset\!\Ver$ to be contained in~$\T'$ and
then cut the edges starting in~$\Ver'$, but ending in $\Ver\!-\!\Ver'$, in half
and thus convert them to marked points.
We define \sf{the complement} of a decorated subgraph~$\T'$ as in~\e_ref{SubDual_e} to be 
the decorated subgraph
\BE{Complement_e}
 (\T')^c= (\Ver'^c,\Edg'^c,S'^c,\g'^c,\fd'^c) \EE
of $\T$ with $\Ver'^c\!=\!\Ver\!-\!\Ver'$.\\

An \sf{involution} $\si$ on a decorated graph~$\T$ as in~\e_ref{Dual_e} is an automorphism of 
the graph~$(\Ver,\Edg)$ and the set~$S$ such~that
\BE{Invol_e}
 \si\circ\si=\id,\qquad 
 \g\circ\si=\g,\qquad \si\circ\fd=\fd\circ\si.\EE
In such a case, let $\V_{\R}^{\si}(\T)\!\subset\!\Ver$ and $\E_\R^{\si}(\T)\!\subset\!\Edg$ be 
the subsets consisting of the fixed points of~$\si$ and 
$$\V_\C^{\si}(\T)\equiv\Ver\!-\!\V_\R^{\si}(\T),\qquad  \E_\C^{\si}(\T)\equiv\Edg\!-\!\E_\R^{\si}(\T).$$
A \sf{rooted decorated graph} 
\BE{RDG_e} \T=\big(\Ver,\Edg,S,\g,\fd;\si,\v_0\big) \EE
is a connected decorated graph with an involution~$\si$ and  a special vertex $\v_0\!\in\!\Ver$, 
called the \sf{root}, such~that $\si(\v_0)\eq\v_0$ and 
\begin{enumerate}[label=$\bu$,leftmargin=*]

\item there are no loops in~$(\Ver,\Edg)$  passing through~$\v_0$
(i.e.~removing any edge containing $\v_0$ disconnects this graph), and 

\item $\val_w(\T)\!\ge\!3$ for all $w\!\in\!\Ver\!-\!\{\v_0\}$;

\end{enumerate}
see Figure~\ref{FigDualGraph}.
In such a case, let $\E_0(\T)\!\subset\!\Edg$ be the subset of edges containing the root~$\v_0$ and
$$\E_{0;\C}^\si(\T)=\E_0(\T)\cap\E_{\C}^{\si}(\T), \qquad 
\E_{0;\R}^\si(\T)=\E_0(\T)\cap\E_{\R}^{\si}(\T)\,.$$
Since $\E_0(\T)$ is preserved by the action of~$\si$, so is~$\E_{0;\C}^\si(\T)$.\\

A \sf{rooted decorated subgraph} of a rooted decorated graph~$\T$ as in~\e_ref{RDG_e} 
is a rooted decorated graph
\BE{subRDG_e} \T'=\big(\Ver',\Edg',S',\g',\fd';\si',\v_0\big) \EE
such that $(\Ver',\Edg',S',\g',\fd')$ is a decorated subgraph 
of $(\Ver,\Edg,S,\g,\fd)$ and
$$\si'=\si|_{\Ver'\sqcup\Edg'\sqcup S'}.$$
The \sf{complement}~$(\T')^c$ of a rooted decorated subgraph~$\T'$ as in~\e_ref{subRDG_e} 
is the complement of the decorated graph
$(\Ver',\Edg',S',\g',\fd')$ in $(\Ver,\Edg,S,\g,\fd)$.
This is a decorated subgraph preserved by~$\si$.\\

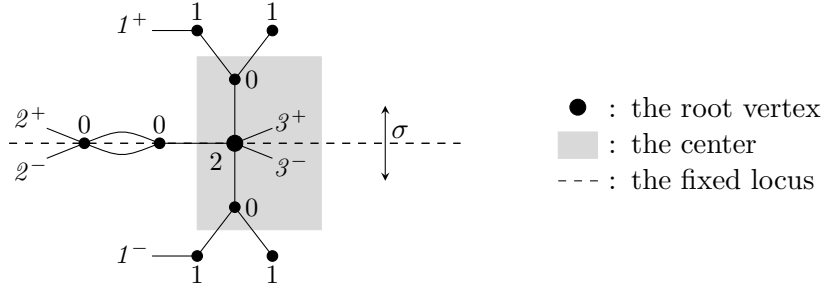
\begin{figure}
\begin{center}
\begin{tikzpicture}
 \filldraw[color=black!15!white] (1.5,1.15) rectangle (3.15,-1.15)
  (6.3,0.15) rectangle (6.82,-0.2);
 \filldraw (0,0) circle (2pt)
 (1,0) circle (2pt)
 (2,0) circle (3pt)
 (2,0.85) circle (2pt)
 (2,-0.85) circle (2pt)
 (1.5,1.5) circle (2pt)
 (2.5,1.5) circle (2pt)
 (1.5,-1.5) circle (2pt)
 (2.5,-1.5) circle (2pt)
 (6.56,0.48) circle (3pt);
 \draw[dashed] (-1,0)--(5,0);
 \draw[>=stealth,<->] (4,0.5)--(4,-0.5);
 \draw (4.2,0.15) node {$\si$};
 \draw (0,0) node[above] {\small $0$}
 (1,0) node[above] {\small $0$}
 (1.75,-0) node[below] {\small $2$}
 (2,0.85) node[right] {\small $0$}
 (2,-0.85) node[right]{\small $0$}
 (1.5,1.5) node[above]{\small $1$}
 (2.5,1.5) node[above]{\small $1$}
 (1.5,-1.5) node[below]{\small $1$}
 (2.5,-1.5) node[below]{\small $1$}
 (1,1.6) node[left] {\small $\mathit 1^+$}
 (1,-1.45) node[left] {\small $\mathit 1^-$}
 (-0.35,0.35) node[left] {\small $\mathit 2^+$}
 (-0.35,-0.35) node[left] {\small $\mathit 2^-$}
 (2.75,0.3) node {\small $\mathit 3^+$}
 (2.75,-0.3) node {\small $\mathit 3^-$};
 \draw[dashed] (-1,0)--(5,0);
 \draw (0,0)..controls (0.5,0.2)..(1,0)
 (0,0)..controls (0.5,-0.2)..(1,0)
 (1,0)--(2,0)
 (2,0)--(2,0.85)
 (2,0)--(2,-0.85)
 (1.5,1.5)--(2,0.85)
 (2.5,1.5)--(2,0.85)
 (1.5,-1.5)--(2,-0.85)
 (2.5,-1.5)--(2,-0.85);
 \draw (0,0)--(-0.5,0.2)
 (0,0)--(-0.5,-0.2)
 (2,0)--(2.5,0.2)
 (2,0)--(2.5,-0.2)
 (1.5,1.5)--(0.9,1.5)
 (1.5,-1.5)--(0.9,-1.5);
 \draw[dashed] (6.3,-0.5)--(6.9,-0.5);
 \draw (6,0) node[right,text width=5cm]
 {~~~~~\,~: the root vertex\\
 ~~~~~~\,: the center\\ 
 ~~~~~~\,: the fixed locus};
\end{tikzpicture}
\end{center}
\caption{A rooted decorated graph representing an element $\T$ of $\sT_{7,3}(\cC)^\phi$}
\label{FigDualGraph}
\end{figure}

\noindent
We define \sf{the center} of a rooted decorated graph~$\T$ as in~\e_ref{RDG_e}
to be the maximal rooted  decorated subgraph
\BE{CsubRDG_e} \T^0\equiv\big(\Ver^0,\Edg^0,S^0,\g^0,\fd^0;\si^0,\v_0\big) \EE
such that 
\BE{CsubRGDcond_e} g_a(\T)=\g(\v_0)+ g_a\big((\T^0)^c\big). \EE
The assumption that no loop in~$(\Ver,\Edg)$ contains~$\v_0$ implies that 
each rooted decorated graph has a well-defined center.
The complement of the one-vertex decorated subgraph 
\BE{Cs0ubRDG_e} \T_0\equiv\big(\{\v_0\},\eset,S_0,\g_0,\{S_0\!\to\!\v_0\};\si_0,\v_0\big) \EE
of~$\T^0$ containing~$\v_0$
consists of connected decorated subgraphs $\T'_{0;e}$
of $\T^0$ indexed by the edges $e$ in $\E_0(\T)$.
If $e$ is in~$\E_0(\T)\!\cap\!S^0$, we define $\T'_{0;e}\!=\!\eset$.
The assumption that $\val_w(\T)\!\ge\!3$ for all $w\!\in\!\Ver\!-\!\{\v_0\}$ 
and~\e_ref{CsubRGDcond_e} imply that each $\T'_{0;e}$ 
is a trivalent graph and $g(\T'_{0;e})\!=\!0$.
For each $e\inn\E_0(\T)$, let 
\BE{T0_e}
 \T^\si_{0;e} = \big(\Ver^\si_{0;e},\Edg^\si_{0;e},S^\si_{0;e},\g^\si_{0;e},\fd^\si_{0;e}\big)
\EE
be the decorated subgraph determined by $\Ver^\si_{0;e}\eq\Ver(\T'_{0;e})\!\cup\!\si(\Ver(\T'_{0;e})).$
\\

\noindent
Let $\T$ be a rooted decorated graph as in~\e_ref{RDG_e},
$\T^0$ be its center as in~\e_ref{CsubRDG_e}, and
\BE{TComplement_e}\T^c\equiv \big(\Ver^c,\Edg^c,S^c,\g^c,\fd^c\big)
=\big((\Ver^0)^c,(\Edg^0)^c,(S^0)^c,(\g^0)^c,(\fd^0)^c\big)
\equiv (\T^0)^c\EE
be its complement.
Let
$$ \E_{\bu}(\T)\equiv\Edg-\big(\Edg^0\!\sqcup\!\Edg^c\big),\quad
\E_{\bu;\C}^\si(\T)\equiv\E_{\bu}(\T)\!\cap\E^\si_\C(\T),\quad
\E_{\bu;\R}^\si(\T)\equiv\E_{\bu}(\T)\!\cap\E^\si_\R(\T)$$
be the edges separating the center $\T^0$ of $\T$ from its complement $\T^c$. Thus,
\BE{EdgDecomp_e}
\Edg=\Edg^0\sqcup\E_\bullet(\T)\sqcup\Edg^c.
\EE
The decorated subgraph $\T^c$ is the disjoint union of the connected decorated subgraphs $\T_{\bu;e}'$ indexed by the edges $e\inn\E_{\bu}(\T)$.
The assumption that $\val_w(\T)\!\ge\!3$ for all $w\!\in\!\Ver\!-\!\v_0$ implies
that each $\T'_{\bu;e}$ is a trivalent graph.
For each $e\inn\E_{\bu}(\T)$, let
\BE{Tbu_e}
\T_{\bu;e}^\si\eq \big(\Ver_{\bu;e}^\si,\Edg_{\bu;e}^\si,S_{\bu;e}^\si,\g_{\bu;e}^\si,\fd_{\bu;e}^\si\big)
\EE
be the decorated subgraph determined by 
$\Ver_{\bu;e}^\si\eq\Ver(\T_{\bu;e}')\!\cup\!\Ver(\si(\T_{\bu;e}'))$.
Define
\begin{gather}
 \lr{\cdot}\!: \,\E_\bullet(\T)\lra \E_0(\T)\qquad\quad
 \tn{by}\qquad
 \tne\in S_{0;\lr{e}}^\si\!\cup\!\{\lr{e}\}\quad\forall~e\inn\E_\bu(\T)\,,\nonumber\\
 \Aut_{\bu}(\T)\eq\big\{g\inn\Aut(\T)\!: g\!\cdot\! e\eq e~\forall 
e\inn\E_\bu(\T)\big\},\quad\!
 \Aut^*(\T)\eq\Aut(\T)\big/\Aut_{\bu}(\T)\,.\label{Aut*_e}
\end{gather}

\subsection{The strata of $\ov\fM_{g,\f}(\cC)^{\phi}$}
\label{Strata_subs}

We recall the notion of a \sf{nodal $g_0$-doublet} introduced in~\cite{RealGWsIII}.
It is
a two-component nodal symmetric surface~$(\Si,\si)$ of the~form
\BE{SymSurfDbl_e}\Si\equiv \Si_1\!\sqcup\!\Si_2
\equiv \{1\}\!\times\!\Si_0 \sqcup \{2\}\!\times\!\ov\Si_0, \qquad
\si(i,z)=\big(3\!-\!i,z\big)~~\forall~(i,z)\!\in\!\Si,\EE
where $\Si_0$ is a connected oriented, possibly nodal, genus~$g_0$ surface 
and  $\ov\Si_0$ denotes $\Si_0$ with the opposite orientation.
The arithmetic genus of a $g_0$-doublet is $2g_0\!-\!1$.
The components of~$\Si$ are ordered.\\

\noindent
If $S_1$ and $S_2$ are finite sets with a fixed bijection~$\si_S$ between them,
let $\ov\cM_{2g_0-1,(S_1\sqcup S_2,\si_S)}^{\bu}$
denote the moduli space of stable
nodal $g_0$-doublets with the first component carrying the $S_1$-marked points 
and with the marked points interchanged by the involution~$\si_S$.
Similarly, if $S$ is a finite set with an involution~$\si_S$ and~$g\!\in\!\Z^{\ge0}$ is such that 
$2g\!+\!|S|\!\ge\!3$, we denote by $\R\ov\cM_{g,(S,\si_S)}$
the Deligne-Mumford space of real genus~$g$ curves with marked points indexed by~$S$
and their type (real vs.~conjugate) characterized by~$\si_S$.
Let
\BE{LE_e}
L_i,~\bE\,\lra\,\ov\cM_{2g_0-1,(S_1\sqcup S_2,\si_S)}^{\bu},~\R\ov\cM_{g,(S,\si_S)}
\EE
be the universal tangent line bundle for the $i$-th marked point for each
$i\inn S_1\!\sqcup\!S_2,S$ and the Hodge vector bundles of holomorphic differentials, respectively.\\

\noindent
For each $e\!\in\!\E^\si_{0;\C}(\T),\E^\si_{\bu;\C}(\T)$, 
we denote~by
$$\cM_{\T,\si;e}\subset\ov\cM^\bu_{g_a(\T^\si_{0;e}),(S^\si_{0;e},\si)},
\ov\cM^\bu_{g_a(\T^\si_{\bu;e}),(S^\si_{\bu;e},\si)} $$
the open stratum consisting of the symmetric surfaces $(\Si,\si)$ with $\si$-compatible complex 
structure whose dual graph is $\T^\si_{0;e}$ or $\T^\si_{\bu;e}$ with involution~$\si$.
For each $e\!\in\!\E^\si_{0;\R}(\T),\E^\si_{\bu;\R}(\T)$, let
$$\cM_{\T,\si;e}\subset
 \R\ov\cM_{g_a(\T^\si_{0;e}),(S^\si_{0;e},\si)},\R\ov\cM_{g_a(\T^\si_{\bu;e}),(S^\si_{\bu;e},\si)}$$
be the analogous open stratum.
If~$e\inn\E_0(\T)$ and~$\T^\si_{0;e}\eq\eset$,
we define $\cM_{\T,\si;e}$ to consist of one point.\\

\noindent
We now return to the setting of Proposition~\ref{PrpNoContr}. 
If $\f$ is generic, the smooth embedded real curve~$\cC$ in~$(X,\phi)$
intersects the images of $f_1,\ldots,f_l$ at distinct non-real points
$$z_1^+(\cC)\equiv\cC\cap f_1(Y_1),\qquad\ldots,\qquad z_l^+(\cC)\equiv\cC\cap f_l(Y_l).$$
Let $z_i^-(\cC)\eq\phi(z_i^+(\cC))$.
The dual graph of an element~$[\u]$ of $\M_{\T,\f}(\cC)^\phi$
is a rooted decorated graph  which is completely determined
by the pair $\T\!=\!(\ft,\fd)$.
The center~$\T^0$ of~$\T$ corresponds to the maximal connected union~$\Si_u^0$ 
of the irreducible components 
of the domain~$\Si_u$ of each element of $\M_{\T,\f}(\cC)^\phi$ 
such that $\Si_u^0$ contains the center component~$\Si_{u;0}$.
The topological components of the complement of $\Si_{u;0}$ in~$\Si_u^0$
are the trees~$\wh\Si_{u;e}^0$ of spheres
corresponding to the graphs $\T'_{0;e}$ with $e\!\in\!\E_0(\T)\!-\!S^0$;
we define $\wh\Si_{u;e}^0\!=\!\eset$ if $e\!\in\!\E_0(\T)\!\cap\!S^0$.
The topological components of the complement of~$\Si_u^0$ in~$\Si$
are the unions~$\wh\Si_{u;e}^c$ of the irreducible 
components of~$\Si_u$ corresponding to the graphs $\T'_{\bu;e}$
with $e\!\in\!\E_{\bu}(\T)$;
the sum of their arithmetic genera is~$g\!-\!h$.
For now on,  we will not distinguish a pair $\T\!=\!(\ft,\fd)$ from 
the associated rooted decorated graph.\\

For each \hbox{$e\!\in\!\E_0(\T)$}, let
$S_e(\T)\!\subset\!S$ be the subset of the marked points of~$\T$ carried by the connected component
of the graph obtained by cutting~$\T$ at the edge~$e$ which does not contain~$\v_0$.
If $\T\!\in\!\sT_{g,l}(\cC)^\phi$ is as in~\e_ref{RDG_e},
then
\begin{gather}
\label{SeTcond_e0}
S=\big\{1^+,1^-,\ldots,l^+,l^-\big\}, \qquad
g_a(\T)=g, \qquad \g(\v_0)=h, \\
\label{SeTcond_e}
\big|S_e(\T)\big|\le1~~~\forall~e\!\in\!\E_{0;\C}^\si(\T), \quad
S_e(\T)=\eset~~\forall~e\!\in\!\E_{0;\R}^\si(\T);
\end{gather}
the last two properties hold because the points $z_1^+(\cC),\ldots,z_l^+(\cC)$ 
are non-real and distinct.
Define
$$\cC^*_\T\equiv\big\{(z_e)_{e\in\E_0(\T)}\inn\cC^{\E_0(\T)}\!:~
 z_e\!\ne\! z_{e'}~\tn{if}~e\!\ne\! e',~
 z_e\eq z_i^\pm(\cC)~\tn{if}~S_e(\T)\eq\{i^\pm\}\big\}\subset\cC^{\E_0(\T)}.$$
The involution $\si$ of $\T$ acts on $\cC^*_\T$ by
\BE{Csidfn_e}\big(\si(\mathbf{z})\big)_e=\phi(z_{\si(e)})\qquad
\forall~\mathbf{z}\eq(z_e)_{e\in\E_0(\T)}\,.\EE
We denote by $\cC^{*,\si}_\T\!\subset\!\cC^*_\T$ the fixed locus.\\

\noindent
By~\e_ref{SeTcond_e}, we can choose $\E_{0;+}^\si(\T)\!\subset\!\E_{0;\C}^\si(\T)$ such~that 
$$\E^\si_{0;\C}(\T)=\E_{0;+}^\si(\T)\sqcup\si\big(\E_{0;+}^\si(\T)\big)
\qquad\tn{and}\qquad
S_e(\T)\!\cap\!\{1^-,\ldots,l^-\}=\eset \quad\forall~e\inn\E_{0;+}^\si(\T).$$
We then choose $\E_{\bu;+}^\si(\T)\!\subset\!\E_{\bu;\C}^\si(\T)$ such that
$$\E_{\bu;\C}^\si(\T)=\E_{\bu;+}^\si(\T)\sqcup\si\big(\E_{\bu;+}^\si(\T)\big)
\qquad\tn{and}\qquad
\lr{e}\in \E_{0;+}^\si(\T)\cup\E_{0;\R}^\si(\T)\quad\forall\,e\inn\E_{\bu;+}^\si(\T)\,.$$
Let
$$\E^\si_{0;\ge}(\T)\equiv\E^\si_{0;+}(\T)\sqcup\E^\si_{0;\R}(\T),\qquad
 \E^\si_{\bu;\ge}(\T)\equiv\E^\si_{\bu;+}(\T)\sqcup\E^\si_{\bu;\R}(\T)\,.$$
We take 
\BE{tiM_e}
  \ti\M_{\T,\f}(\cC)^\phi = 
	\prod_{e\in\E_{0;\ge}^\si(\T)\sqcup\E^\si_{\bu;\ge}(\T)}\hspace{-.45in}\cM_{\T,\si;e}
\times\cC_{\T}^{*,\si}.\EE
The actions of the group $\Aut^*(\T)$ on $\E_{0}(\T)$ and~$\E_{\bu}(\T)$
and the possible changes in  the subsets
$$\E_{0;+}^\si(\T)\subset\E_{0;\C}^\si(\T)  \qquad\hbox{and}\qquad
\E_{\bu;+}^\si(\T)\subset\E_{\bu;\C}^\si(\T)$$ 
induce an action of a group $\ti\Aut^*(\T)$ on~\e_ref{tiM_e}.
The natural node-identifying immersion
\BE{Iota_e}
\io_{\T,\f}\!:\,\ti\M_{\T,\f}(\cC)^\phi\lra\M_{\T,\f}(\cC)^\phi\EE
is $\ti\Aut^*(\T)$-invariant.\\

\noindent
For each $\tne\inn\E_{0;\ge}^\si(\T),\E^\si_{\bu;\ge}(\T)$ and $e\!\in\!\E_{0}^\si(\T)$, 
denote~by
$$\pi_{\T;\tne}\!: \ti\M_{\T,\f}(\cC)^\phi\lra \cM_{\T,\si;e}
\qquad\tn{and}\qquad
\pi_{\cC;e}\!: \ti\M_{\T,\f}(\cC)^\phi\lra\cC$$
the corresponding projection maps.
For $e\!\in\!\si(\E_{0;+}^\si(\T)),\si(\E^\si_{\bu;+}(\T))$, let
$$\pi_{\T;\tne}\!=\!\pi_{\T;\si(\tne)}\!: \ti\M_{\T,\f}(\cC)^\phi\lra \cM_{\T,\si;\si(e)}.$$
For each $e\inn\E^\si_{\bu;\C}(\T)$, let
\BE{EeC}\bE_{e} \lra \ti\M_{\T,\f}(\cC)^\phi\EE
be the subbundle of $\pi^*_{\T;e}\bE$ consisting of the holomorphic differentials 
supported on~$\wh\Si_{u;e}^c$.
For each $e\inn\E^\si_{\bu;\R}(\T)$, let
\BE{EeR}\bE_{e}\!=\!\pi^*_{\T;e}\bE \lra \ti\M_{\T,\f}(\cC)^\phi\,.\EE
The involution~$\si$ lifts to an $\R$-linear isomorphism
$$\ti\si_e\!:\bE_{e}^*\lra\bE_{\si(e)}^*, \qquad
\big\{\ti\si_e(\eta)\big\}(\ka)=\ov{\eta(\fc_{\C}\!\circ\!\ka\!\circ\!\tnd\si)}\in\C
\qquad\forall~\eta\!\in\!\bE_e^*,\,\ka\!\in\!\bE_{\si(e)},$$
where $\fc_\C\!:\C\!\lra\!\C$ is the standard conjugation.
Let
\BE{ObsTCdfn_e}\Obs_\T\equiv
\bigoplus_{\tne\in\E_{\bu}(\T)}\!\!\!\bE^{*}_e\!\otimes\!\pi_{\cC;\lr{\tne}}^*TX \lra
 \ti\M_{\T,\f}(\cC)^\phi\,.\EE
The involution $\phi$ acts on $\Obs_\T$ by
$$\phi\big(\eta_e\!\otimes\! Y_e\big)=
 \big(\ti\si_e(\eta_e)\big)\!\otimes\!\big(\tnd_{z_{\lr{e}}}\!\phi(Y_e)\big)\inn
 \bE_{\si(e)}^*\!\!\otimes\!\pi^*_{\cC;\si\lr{e}}TX\quad
 \forall\,e\inn\E_\bu(\T),\,\eta_e\inn\bE^*_e,\,Y_e\inn\pi^*_{\cC;\lr{e}}TX.$$
In other words,
$$\big\{\phi(\eta)\big\}\big((\ka_e)_{e\in\E_\bu(\T)}\big)
=\big( \tnd_{z_{\si(\lr{e})}}\!\phi
\big(\eta(\fc_{\C}\!\circ\!\ka_e\!\circ\!\tnd\si)\big)\big)_{e\in\E_\bu(\T)}
\quad\forall\,\eta\!\in\!\Obs_\T,\,\ka_e\!\in\!\E_e,\,e\!\in\!\E_{\bu}(\T).$$
We take the \sf{obstruction bundle} 
\BE{Obs_e}\Obs_\T^{\phi}\subset\Obs_\T\lra\ti\M_{\T,\f}(\cC)^\phi\EE
to be the fixed locus under this action.\\

Let $\ti\u\!\in\!\ti\M_{\T,\f}(\cC)^\phi$ with $\u\!=\!\io_{\T,\f}(\ti\u)$ 
as in~\e_ref{u_e}. 
For each $e\!\in\!\E_{\bu}(\T)$,
$u$ is constant on the support of the elements of~$\bE_e|_{\ti\u}$ 
(which is contained in~$\wh\Si_{u;e}^c$) and 
sends it to $\pi_{\cC;\lr{e}}(\ti\u)\!\in\!\cC$.
Thus, we can define 
\BE{Thudfn_e}\begin{split}
&\hspace{.8in}\Th_{\ti\u}\!: \Ga\big(\Si;T^*\Si_u^{0,1}\!\otimes\!u^*(TX,J)\big)
\lra\Obs_{\T}\big|_{\ti\u}
\quad\hbox{by}\quad\\
&\big\{\Th_{\ti\u}(\eta)\big\}\big((\ka_e)_{e\in\E_\bu(\T)}\big)
=\bigg(\frac{\fI}{2\pi}\!\int_{\wh\Si_{u;e}^c}\!\!\!\!
\ka_e\!\wedge\!\!\eta\!\bigg)_{\!\!e\in\E_\bu(\T)}
\qquad\forall~\ka_e\!\in\!\bE_e|_{\ti\u},~e\!\in\!\E_\bu(\T)\,.
\end{split}\EE
If $\eta$ is $(\phi,\si_u)$-invariant, then $\Th_{\ti\u}(\eta)\!\in\!\Obs_\T^{\phi}|_{\ti\u}$.
A Ruan-Tian deformation~$\nu$ for maps from genus~$g$ curves with $2l$ markings
into~$(X,J)$ as in \cite[Section~2]{GWsAbsRel} determines elements 
$$\nu_{\ti\u}\in\Ga\big(\Si;T^*\Si_u^{0,1}\!\otimes\!u^*(TX,J)\big)
\qquad\hbox{and}\qquad
\ov\nu_{\T}(\ti\u)\equiv \Th_{\ti\u}(\nu_{\ti\u})\in \Obs_{\T}\big|_{\ti\u}.$$
If $\nu\inn\A^\phi_{g,l}(J)$,
then $\ov\nu_{\T}$ takes values in~$\Obs_{\T}^{\phi}$.
The homomorphism $\nu\!\lra\!\ov\nu_{\T}$ surjects onto the
subspace of $\ti\Aut^*(\T)$-invariant sections.\\

For each edge $e\eq(w_1,w_2)$ in $\Edg$,
we denote by $\ti L_{e}$ the tensor product of the two complex line bundles corresponding 
to the tangent spaces of the irreducible components $\Si_{u;w_1}$ and $\Si_{u;w_2}$ 
of~$\Si_u$ at the node~$e$.
Let
\BE{FT_e}
\pi_{\ti\F\T}\!:\, \ti\F\T=\Big(\bigoplus_{e\in\Edg}\!\!\!
  \ti L_{e}\Big)\Big/\Aut_{\bu}(\T) \lra \ti\M_{\T,\f}(\cC)^\phi.\EE
We fix an $\ti\Aut^*(\T)$-invariant and involution-invariant metric on $\ti\F\T$
and for each $\de\!\in\!\R$ define
$$\ti\F\T_{\de}=\big\{\up\!\in\!\ti\F\T\!:\,|\up|\!<\!\de\big\}.$$
The involution $\si$ acts on $\ti\F\T$ by
$$\big(\si\big((\up_{e'})_{e'\in\Edg}\big)\big)_e=
 \tnd\si(\up_{\si(e)})   \qquad\forall~e\!\in\!\Edg\,.$$
We denote the corresponding fixed locus by $\ti\F\T^{\si}$.\\

If $\T\!=\!(\ft,\fd)$ with $\ft$ basic, then $\Edg\!=\!\E_\bu(\T)$ and
$$\ti\F\T= \bigoplus_{e\in\E_\bu(\T)}\!\!\!\!
 \pi^*_{\T;e}L_e\!\otimes\!\pi^*_{\cC;\lr{e}}T\cC\,.$$
Define
\BE{DT_e}
\cD_\T\!:\ti\F\T\lra\Obs_\T,\quad
 \big\{\cD_\T\big((\up_e\!\otimes\!\up_{\lr{e}})_{e\in\E_\bu(\T)}\big)\big\}
 \big((\ka_e)_{e\in\E_\bu(\T)}\big)
 =\big(\ka_e(\up_e)\up_{\lr{e}}\big)_{e\in\E_\bu(\T)},\EE
with $\ka_e\inn\bE_e$.
The homomorphism~$\cD_\T$ restricts to a homomorphism
$$\cD_\T^{\phi}\!: \ti\fF\T^{\si}\lra \Obs_\T^{\phi}$$
between the fixed loci of the two bundles.\\

The bundles $\ti\F\T^\si$ and $\Obs^\phi_\T$ form 
a deformation-obstruction complex for 
$\ov\M_{g,\f}(X,B;J)^\phi$ over $\ov\M_{\T,\f}(\cC)^\phi$.
The choice of real orientation on $(X,\om,\phi)$ orients this moduli space and thus the total space of the vector bundle
\BE{ObsoverFT_e} \pi^*_{\ti\F\T^\si}\Obs^\phi_\T\lra\ti\F\T^\si\,,\EE
where $\pi_{\ti\F\T^\si}\!:\ti\F\T^\si\!\lra\!\ti\M_{\T,\f}(\cC)^\phi$
is the bundle projection map.
Since the dimension of the total space of $\ti\F\T^\si$ and the rank of $\Obs_\T^\phi$ are the same,
every transverse zero of a section $\Psi$ of~\e_ref{ObsoverFT_e} thus has a well defined sign.
The next structural proposition will be proved in Section~\ref{SecAnalytic}.

\begin{prp}\label{PrpSection}
Let $(X,\om,\phi)$, $B$, $g,l$, and $\f$ be as in Theorem~\ref{main_thm}, 
$J\inn\cJ^\phi_{\reg}(g,B)$, $\cC$ be an element of $\cM_{h,\f}^{\phi}(B;J)$,
$\nu\!\in\!\A_{g,l}^{\phi}(J)$, $\T\!\equiv\!(\ft,\fd)\!\in\!\sT_{g,l}(\cC)^\phi$, and
$K\!\subset\!\M_{\T,\f}(\cC)^\phi$ be a precompact open subset.
\begin{enumerate}[label=(\arabic*),leftmargin=*]

\item\label{notbasic_it} If $\ft$ is not basic and $\nu$ is generic,
there exist $\de_{\nu}(K)\!\in\!\R^+$ and an open neighborhood 
$U_{\nu}(K)$ of~$K$ in $\X_{g,\f}(X,B)^\phi$ such that 
\BE{esetinter_e} \ov\M_{g,\f}(X,B;J,t\nu)^\phi\cap U_{\nu}(K)=\eset
\qquad\forall~t\in\big(\!-\!\de_{\nu}(K),\de_{\nu}(K)\big).\EE

\item\label{basic_it}  If $\ft$ is basic, 
there exist $\de_{\nu}(K)\inn\R^+$, a family
$$U_{\nu;\de}(K)\subset\X_{g;\f}(X,B)^{\phi} , 
\qquad \de\!\in\!\big(0,\de_{\nu}(K)\big),$$
of neighborhoods of $K$, continuous families of continuous $\ti\Aut^*(\T)$-invariant maps
$$\Phi_{\T;t\nu}\!: 
\ti\F\T^{\si}_{\de_{\nu}(K)}|_{\io_{\T,\f}^{-1}(K)}\lra U_{\nu;\de_{\nu}{K}}(K),  
\quad \ve_{\T;t\nu}\!:\ti\F\T^{\si}\lra\R^+, \quad 
t\in\big(-\!\de_{\nu}(K),\de_{\nu}(K)\big),$$
and a continuous family of  $\ti\Aut^*(\T)$-sections
\BE{PsiDfn_e} \Psi_{\T;t\nu}\in
 \Ga\big(\ti\F\T^{\si}_{\de_{\nu}(K)}|_{\io^{-1}_{\T,\f}(K)};\,
 \pi^*_{\ti\F\T^\si}\Obs_\T^{\phi}\,\big), 
\quad t\in\big(-\!\de_{\nu}(K),\de_{\nu}(K)\big),\EE
such that $\Phi_{\T;t\nu}$ is a degree $|\ti\Aut^*(\T)|$ covering of its image,
\begin{gather}
\label{BdCond_e}
\Phi_{\T;0}|_{\io_{\T,\f}^{-1}(K)}=\io_{\T,\f}, \qquad
\ve_{\T;0}|_{\io^{-1}_{\T,\f}(K)}=0,\\
\label{PhiEstimate_e}
\Big\|\Psi_{\T;t\nu}(\up)-\Big(\cD^{\phi}_{\T}(\up)
\!+\!t\Bar\nu_{\T}\big(\pi_{\ti\F\T^{\si}}(\up)\big)\Big)\Big\|
 \le \ve_{\T;t\nu}(\up)\big(\,|\up|\!+\!|t|\big),\\
\label{UnuK_e}
\Phi_{\T;t\nu}\big(\Psi_{\T;t\nu}^{-1}(0)\!\cap\!\ti\F\T^{\si}_{\de}\big)
=\ov\M_{g,\f}(X,B;J,t\nu)^\phi\!\cap\!U_{\nu;\de}(K)\quad\forall~
t\!\in\!(-\de,\de),~\de\!\in\!\big(0,\de_{\nu}(K)\big).
\end{gather}
If in addition $\nu$ is generic, $t\!\in\!(-\de,\de)$,
and $\de\!\in\!(0,\de_{\nu}(K))$, then  
\BE{PhiZeros_e}
\Phi_{\T;t\nu}\!:\Psi_{\T;t\nu}^{-1}(0)\!\cap\!\ti\F\T^{\si}_{\de}
\lra \M^*_{g,\f}(X,B;J,t\nu)^\phi\!\cap\!U_{\nu;\de}(K)\EE
preserves the signs of all points.
\end{enumerate}
\end{prp}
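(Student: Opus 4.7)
The plan is to follow the standard pregluing/obstruction-bundle scheme, carrying the real structure $(\phi,\si)$ through each step so that every construction descends to the fixed loci $\ti\F\T^\si$ and $\Obs_\T^\phi$. Fix $\ti\u\inn\io_{\T,\f}^{-1}(K)$ and set $\u\eq\io_{\T,\f}(\ti\u)$. Since $\cC$ is smoothly embedded, $J\inn\cJ^\phi_{\reg}(g,B)$, and $u$ is constant on each bubble component $\wh\Si_{u;e}^c$, the restriction of $D_u^\phi$ to sections supported on $\Si_{u;0}$ is surjective while $D_u^\phi$ on each bubble component reduces to the real $\Bar\prt$-operator with values in a trivial bundle. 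A Serre-duality calculation identifies the resulting cokernel contribution with the fiber $\Obs_\T^\phi|_{\ti\u}$ of~\eqref{Obs_e}, with the identification given by the residue pairing~\eqref{Thudfn_e}.

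For the basic case~\ref{basic_it}, I would next construct, uniformly over $\ti\u\inn\io^{-1}_{\T,\f}(K)$, a bounded right inverse $Q_{\ti\u}$ to $D_u^\phi$ modulo $\Obs_\T^\phi|_{\ti\u}$ on $(\phi,\si)$-invariant weighted Sobolev spaces over the smoothed domains, depending continuously and $\ti\Aut^*(\T)$-equivariantly on~$\ti\u$. For each $(t,\up)$ with $\up\inn\ti\F\T^\si_{\de_\nu(K)}$, a $\si$-invariant pregluing prescription produces a smoothed symmetric surface and a real pregluing map $\Phi^{\tn{pre}}_{\T;t\nu}(\up)$. A direct expansion shows that the $\Obs_\T^\phi$-projection, given by~\eqref{Thudfn_e}, of $\Bar\prt_J\Phi^{\tn{pre}}_{\T;t\nu}(\up)\!+\!t\nu$ equals $\cD_\T^\phi(\up)\!+\!t\ov\nu_\T(\ti\u)$ modulo terms of size $O((|\up|\!+\!|t|)^2)$, which is~\eqref{PhiEstimate_e}. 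A Picard-type iteration built from $Q_{\ti\u}$ then converts the preglued map into a genuine $(J,t\nu)$-holomorphic real map precisely when the residual obstruction vanishes; this simultaneously defines the section $\Psi_{\T;t\nu}$ in~\eqref{PsiDfn_e} and the gluing map $\Phi_{\T;t\nu}$, and~\eqref{BdCond_e} is tautological at $t\eq 0$, $\up\eq 0$. Surjectivity in~\eqref{UnuK_e} is a standard compactness/uniqueness step, and the degree-$|\ti\Aut^*(\T)|$ covering property is forced by the automorphism quotient in~\eqref{FT_e}.

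For the non-basic case~\ref{notbasic_it}, the inner nodes of $\T$ lie between bubble components on which $u$ is constant, so the admissible perturbation $\nu$ does not contribute to the $t\ov\nu_\T$-term of the leading expansion of $\Psi_{\T;t\nu}$ along the corresponding smoothing directions in $\ti\F\T^\si$. On those directions $\Psi_{\T;t\nu}\eq 0$ therefore reduces to purely algebraic constraints coming from $\cD_\T^\phi$ together with the next-order nonlinear terms; a transversality count shows that these constraints cannot be satisfied for a generic $\nu\inn\A^\phi_{g,l}(J)$ once $|t|\!>\!0$, giving~\eqref{esetinter_e}.

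The main technical obstacle will be the uniform $(\phi,\si)$-equivariant right-inverse estimates on the weighted Sobolev spaces as $\up\!\to\!0$ over the precompact set~$K$, together with the continuous $\ti\Aut^*(\T)$-equivariant dependence of $\Phi_{\T;t\nu}$ and $\Psi_{\T;t\nu}$ on $(t,\up,\ti\u)$. The sign matching in~\eqref{PhiZeros_e} reduces, via this gluing model, to comparing the orientation on $\ov\M_{g,\f}(X,B;J,t\nu)^\phi$ induced by the real orientation of $(X,\om,\phi)$ with the product orientation on the total space of~\eqref{ObsoverFT_e}, which follows from the compatibility of the real orientation with the splitting of $D_u^\phi$ into its center and bubble pieces.
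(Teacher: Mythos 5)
Your outline of the basic case~\ref{basic_it} is essentially the paper's argument: an approximately $J$-holomorphic preglued map, identification of the cokernel over each bubble with $\Obs_\T^\phi$ via the Serre-duality pairing \e_ref{Thudfn_e}, a contraction-mapping solution of the equation projected off the obstruction space with $\Psi_{\T;t\nu}$ defined as the remaining $\Obs_\T^\phi$-component, and the sign comparison via the orientation of the deformation--obstruction complex. Two small caveats there: the achievable error in \e_ref{PhiEstimate_e} is $\ve_{\T;t\nu}(\up)(|\up|+|t|)$ with $\ve_{\T;0}=0$ rather than your claimed $O((|\up|+|t|)^2)$ (which is all the statement needs anyway), and you should account for cutting down by the pseudocycle constraints $\f$, which the paper does by composing the gluing map with a section of the normal bundle of $\fM_{\T,\f}(\cC)^\phi$ in $\fM_{\T}(X)^\phi$ so that \e_ref{UnuK_e} holds in $\X_{g,\f}(X,B)^\phi$ rather than $\X_{g,l}(X,B)^\phi$.

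The genuine gap is in part~\ref{notbasic_it}. Your stated mechanism --- that $\nu$ ``does not contribute along the extra smoothing directions,'' so the equation reduces to ``algebraic constraints coming from $\cD_\T^\phi$'' that cannot be satisfied for generic $\nu$ --- is internally inconsistent (if the residual constraints do not involve $\nu$, its genericity cannot rule them out) and does not identify why the non-basic strata carry no nearby solutions; as phrased, the same words would ``apply'' in the basic case, where solutions do exist. The actual mechanism is a rank drop: after first smoothing the unobstructed nodes in $\Edg^0\sqcup\Edg^c$ (the paper's first gluing step and Lemma~\ref{LMExpansion}), the leading term of the obstruction is $\cD_\T^\phi\circ\rho_\T^\si(\up)$, where $\rho_\T^\si$ multiplies together the smoothing parameters along each chain of ghost components joining a bubble to the center, so the leading term factors through the bundle $\ti\fF\T^{\si}$ indexed by $\E_\bu(\T)$ alone, and
$$\dim\,\ti\M_{\T,\f}(\cC)^\phi+\rk\,\ti\fF\T^{\si}<\dim\,\ti\M_{\T,\f}(\cC)^\phi+\rk\,\ti\F\T^{\si}=\rk\,\Obs_\T^{\phi}$$
precisely when $\ft$ is not basic. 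It is this inequality that lets a generic $\nu$ make the affine map $\up\mapsto\cD_\T^\phi\rho_\T^\si(\up)+\ov\nu_\T(\pi_{\ti\F\T}(\up))$ nowhere zero, after which \e_ref{esetinter_e} follows from the a priori bound on solutions. Your one-step pregluing does not produce this factored expansion --- one needs the derivative of the glued map at the attaching points expanded in terms of the products $\rho_{e;\u}$ as in \e_ref{ZeExpan_e} --- so the argument for \e_ref{esetinter_e} does not go through as written. Note also that it is not true that every extra node lies between components on which $u$ is constant: the nodes attaching ghost rational trees to the center component $\Si_{u;0}$ do not, which is another reason the two-step structure, or an equivalent expansion, is required.
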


\section{Proofs of Propositions~\ref{PrpNoContr}-\ref{PrpSamPar}}
\label{SecPfPrps}

\noindent
We will next deduce the three propositions of Section~\ref{SecNotation} from Proposition~\ref{PrpSection}
following the principles laid out in \cite[Sections~3]{g2n2and3}
which relate stratawise degenerate contributions to zeros of affine maps between vector bundles.
These principles readily apply with the manifold~$\cM$ 
(accidentally denoted by~$X$ in Lemma~3.5(3) and Corollary~3.6(5) in~\cite{g2n2and3})
replaced by an orbifold
(in the sense specified after \cite[Definition~2.10]{g0pr}).
All bundles in \cite[Sections~3]{g2n2and3} are assumed to be  complex 
so that all transverse isolated zeros of bundles sections are canonically oriented.
While the former is no longer the case, the latter is;
see the paragraph preceding Proposition~\ref{PrpSection}.
The rank over~$\C$ appearing in the statements of 
\cite[Sections~3]{g2n2and3} should be replaced by half the rank over~$\R$.
We will apply the reasoning behind \cite[Corollary~3.6]{g2n2and3}
with 
$$\cM=\ti\M_{\T,\f}(\cC)^\phi, \qquad
F=F^-\!=\ti{F}^-=\ti\F\T^{\si},  \qquad \O=\O^-=\Obs_\T^{\phi}, $$
$F^+\!\subset\!F$ and $\O^+\!\subset\!\O$ being the trivial (zero) subbundles,
and $\al\!=\!\cD_{\T}^{\phi}$.
We continue with the assumptions and the notation as 
in Propositions~\ref{PrpNoContr} and~\ref{PrpSection}
and assume that $\nu$ is generic.

\subsection{The regularity of the boundary strata}
\label{RegStrataContr_subs}

The crucial difference between the main boundary strata, i.e.~those corresponding
to $\T\eq(\ft,\fd)$ with $\ft$ basic, and the remaining boundary strata
is that the latter are $\dbar_J$-hollow in the sense of
\cite[Definition~3.11]{g2n2and3} and thus automatically do not contribute
to the number~\e_ref{GWdfn_e}.
In order to complete the proof of Proposition~\ref{PrpNoContr},
we show that  the main boundary strata with $\fd\!\neq\!\fd_0(\ft)$
do not contribute either;
the reason is more delicate in this case.

\begin{proof}[{\bf\emph{Proof of Proposition~\ref{PrpNoContr}}}]
Let $\T\eq(\ft,\fd)$.\\

Suppose $\ft$ is not basic.
For the purposes of Definition~\ref{DfnContri}, we then take $K_{\nu}\!=\!\eset$.
Given a precompact open subset $K\!\subset\!\M_{\T,\f}(\cC)^\phi$,
let $\de_{\nu}(K)\!\in\!\R^+$ and $U_{\nu}(K)\!\subset\!\X_{g,\f}(X,B)^\phi$
be as in Proposition~\ref{PrpSection}\ref{notbasic_it}.
Let $U\!\subset\!U_{\nu}(K)$ be an open neighborhood of~$K$.
By~\e_ref{esetinter_e}, 
$$\M_{g,\f}^*(X,B;J,t\nu)^\phi\cap U
\subset \ov\M_{g,\f}(X,B;J,t\nu)^\phi\cap U_{\nu}(K)
=\eset.$$
This establishes Proposition~\ref{PrpNoContr} for $\T\eq(\ft,\fd)$
with $\ft$ not~basic.\\

\noindent
Suppose $\ft$ is basic. In particular,
$$\E_0(\T)=\E_{\bu}(\T)=\Edg, \quad \ti\F\T^{\si}=\ti\fF\T^{\si}, \quad
\dim\,\ti\M_{\T,\f}(\cC)^\phi+\rk\,\ti\F\T^{\si}=\rk\,\Obs_\T^{\phi}\,.$$
We assume that~$\T$ is given by~\e_ref{RDG_e} and satisfies~\e_ref{SeTcond_e0} 
and~\e_ref{SeTcond_e}.
For each $e\!\in\!\Edg$, we denote by $e/w_0\!\in\!e$ the vertex different from $w_0\!\in\!e$.
Let 
$$S_0(\T)=\big\{i\!\in\!\{1,\ldots,l\}\!:\,i^+\!\in\!\d^{-1}(w_0)\big\}$$
be the pairs of marked points carried by the non-contracted curve in $\ti\M_{\T,\f}(\cC)^\phi$.\\

\noindent
The vector bundles $\ti\F\T^{\si}$ and $\Obs_\T^{\phi}$,
the bundle homomorphism~$\cD_{\T}^{\phi}$, and the bundle section~$\ov\nu_{\T}$
naturally extend over the compactification
\BE{whMdfn_e}\wh\M_{\T,\f}(\cC)^\phi= 
\prod_{e\in\E^\si_{\bu;+}(\T)}\hspace{-.2in}\ov\cM_{2\g(e/w_0)-1,(S_{\bu;e}^{\si},\si)}^{\bu}
\times \prod_{e\in\E^\si_{\bu;\R}(\T)}\hspace{-.2in}\R\ov\cM_{\g(e/w_0),(S_{\bu;e}^{\si},\si)}
\times\wh\cC_{\T}\EE
of $\ti\M_{\T,\f}(\cC)^\phi$, where $\wh\cC_{\T}$ is the closure of
$$\big\{\big((z_i^+(\cC),z_i^-(\cC))_{i\in S_0(\T)},(z_e)_{e\in\E_{\bu}(\T)}\big)\!:\,
(z_e)_{e\in\E_{\bu}(\T)}\inn\cC_{\T}^*\big\}$$
in the corresponding real moduli space of points on~$\cC$.
We denote these extensions in the same way.
The compactification~\e_ref{whMdfn_e} is a union of the quotients 
of the spaces $\ti\M_{\T',\f}(\cC)^\phi$ by subgroups of $\ti\Aut^*(\T')$
for some rooted decorated graphs~$\T'$.
The restriction of~$\ov\nu_{\T}$ to the quotient of $\ti\M_{\T',\f}(\cC)^\phi$ lifts
 to~$\ov\nu_{\T'}$
(as the sections $\ov\nu_{\T}$ and $\ov\nu_{\T'}$ 
are pullbacks from the closure of $\M_{\T,\f}(\cC)^\phi$ 
in~$\ov\fM_{g,\f}(\cC)^{\phi}$).\\

\noindent
The homomorphism~$\cD_{\T}^{\phi}$ does not vanish over $\ti\M_{\T,\f}(\cC)^\phi$.
The pairings of tangent and cotangent vectors in~\e_ref{DT_e} 
correspond to algebraic bundle sections.
Thus, the homomorphism~$\cD_{\T}^{\phi}$ is a regular polynomial (linear map in this case)
in the sense of \cite[Definition~3.9]{g2n2and3}.
By the first two parts of the proof of \cite[Lemma~3.10]{g2n2and3}, the zero set of 
the affine bundle~map
$$\psi_{\T;\nu}\!: \ti\F\T^{\si}\lra\Obs_\T^{\phi}, \qquad
\psi_{\T;\nu}(\up)=\cD_{\T}^{\phi}(\up)+\ov\nu_{\T}(\u)
\quad\forall~\up\!\in\!\ti\F\T^{\si}|_{\u},~\u\!\in\!\ti\M_{\T,\f}(\cC)^\phi,$$
thus has finitely many zeros.
The continuous extension of this map over $\wh\M_{\T,\f}(\cC)^\phi$
does not vanish over the complement of~$\ti\M_{\T,\f}(\cC)^\phi$.
By the third part of the proof of \cite[Lemma~3.10]{g2n2and3}, 
the signed cardinality of the affine bundle map
$$\psi_{\T;\vp}\!: \ti\F\T^{\si}\lra\Obs_\T^{\phi}, \qquad
\psi_{\T;\vp}(\up)=\cD_{\T}^{\phi}(\up)+\vp(\u)
\quad\forall~\up\!\in\!\ti\F\T^{\si}|_{\u},~\u\!\in\!\wh\M_{\T,\f}(\cC)^\phi,$$
does not depend on a generic choice of the section~$\vp$ of $\Obs_\T^{\phi}$
over~$\wh\M_{\T,\f}(\cC)^\phi$.
In this case, the cobordism in proof of \cite[Lemma~3.10]{g2n2and3} may cross 
the fibers of $\ti\F\T^{\si}$ over the codimension-one boundary strata of~$\wh\M_{\T,\f}(\cC)^\phi$.
However, the orientation on such a cobordism extends across the zeros in these fibers
based on the same considerations as above Proposition~\ref{PrpSection}.
We denote the  signed cardinality of $\psi_{\T;\vp}^{-1}(0)$ by~$N(\cD_{\T}^{\phi})$.\\

\noindent
By the previous paragraph, \e_ref{PhiEstimate_e}, and the proofs of 
Lemma~3.5 and Corollary~3.6 in~\cite{g2n2and3},
there exist a compact subset $\ti{K}_{\T;\nu}\!\subset\!\ti\M_{\T,\f}(\cC)^\phi$
and $C_{\T;\nu}\!\in\!\R^+$ with the following property.
If $\ti{K}\!\subset\!\ti\M_{\T,\f}(\cC)^\phi$ is a precompact open subset
containing~$\ti{K}_{\T;\nu}$, then there exists $\de_{\nu}(\ti{K})\!\in\!\R^+$ so~that 
the bundle~map
$$\Psi_{\T;t\nu}\!: \ti\F\T^{\si}_{\de_{\nu}(\ti{K})}|_{\ti{K}}\lra \Obs_\T^{\phi}$$ 
is defined and 
\BE{PsiTnum_e}
\Psi_{\T;t\nu}^{-1}(0)\subset  \ti\F\T^{\si}_{C_{\T;\nu}t}|_{\ti{K}_{\T;\nu}}\,,
\quad
{}^\pm\big|\Psi_{\T;t\nu}^{-1}(0)\big|=N\big(\cD_{\T}^{\phi}\big)
\qquad\forall~t\!\in\!\big(0,\de_{\nu}(\ti{K})\big).\EE
Let $K_{\nu}\!\subset\!\M_{\T,\f}(\cC)^\phi$ be a compact subset such~that 
$$\ti{K}_{\T;\nu}\subset \ti{K}_{\nu}\!\equiv\!\io_{\T;\f}^{-1}(K_{\nu})$$ 
and set
\BE{BasicContr_e}\Cntr_\f^\phi\big(\M_{\T,\f}(\cC)^\phi\big)=
\frac{1}{|\ti\Aut^*(\T)|}N\big(\cD_{\T}^{\phi}\big)\,.\EE
We verify below that $K_{\nu}$ and the number~\e_ref{BasicContr_e} satisfy 
the conditions of Definition~\ref{DfnContri}.\\

Let $K\!\subset\!\M_{\T,\f}(\cC)^\phi$ be a precompact open subset containing $K_{\nu}$,
$\ti{K}\!=\!\io^{-1}_{\T,\f}(K)$,  and
$$\de_{\nu}(K)\in\big(0,\de_{\nu}(\ti K)\big) \qquad\hbox{and}\qquad
U_{\nu}(K)\equiv U_{\nu;\de_{\nu}(K)}(K)$$ 
be as in Proposition~\ref{PrpSection}\ref{basic_it},
and $U\!\subset\!U_{\nu}(K)$ be a neighborhood of~$K$.
Since $K_{\nu}$ is compact, there exists $\ep_{\nu}(U)\!\in\!(0,\de_{\nu}(K))$ 
such~that 
$$\ti\F\T^{\si}_{C_{\T;\nu}\ep_{\nu}(U)}\big|_{\ti{K}_{\nu}}
\subset \Phi_{\T;t\nu}^{-1}(U)\,.$$
Suppose  $t\!\in\!(0,\ep_{\nu}(U))$.
By~\e_ref{UnuK_e}, the first statement in~\e_ref{PsiTnum_e}, and the last condition,
\begin{equation*}\begin{split}
\Psi_{\T;t\nu}^{-1}(0)\!\cap\!\Phi_{\T;t\nu}^{-1}(U)
&\subset \Psi_{\T;t\nu}^{-1}(0)\!\cap\!\Phi_{\T;t\nu}^{-1}\big(U_{\nu}(K)\big)
=\Psi_{\T;t\nu}^{-1}(0)\!\cap\!\ti\F\T^{\si}_{\de_{\nu}(K)}|_{\ti{K}}\\
&\subset \Psi_{\T;t\nu}^{-1}(0)\!\cap\! \ti\F\T^{\si}_{C_{\T;\nu}\ep_{\nu}(U)}|_{\ti{K}_{\T;\nu}}
\subset \Psi_{\T;t\nu}^{-1}(0)\!\cap\!\Phi_{\T;t\nu}^{-1}(U).
\end{split}\end{equation*}
This implies that 
$$\Psi_{\T;t\nu}^{-1}(0)\!\cap\!\Phi_{\T;t\nu}^{-1}(U)
=\Psi_{\T;t\nu}^{-1}(0)\!\cap\!\ti\F\T^{\si}_{\de_{\nu}(K)}|_K\,.$$
Combining this conclusion with~\e_ref{PsiTnum_e}, we obtain 
$$ {}^\pm\big|\Psi_{\T;t\nu}^{-1}(0)\!\cap\!\Phi_{\T;t\nu}^{-1}(U)\big|
= {}^\pm\big|\Psi_{\T;t\nu}^{-1}(0)\!\cap\!\ti\F\T^{\si}_{\de_{\nu}(K)}|_K  \big|
=N\big(\cD_{\T}^{\phi}\big)\,.$$
The last two statements and~\e_ref{PhiZeros_e} give 
the first statement of Proposition~\ref{PrpNoContr} for $\T\eq(\ft,\fd)$
with $\ft$~basic and the contribution given by~\e_ref{BasicContr_e}.\\

\noindent
It remains to show that $N(\cD_{\T}^{\phi})\!=\!0$ if $\T\eq(\ft,\fd)$
with $\ft$ basic and $\fd\!\neq\!\fd_0(\ft)$.
The last condition implies that at least one conjugate pair $(z_i^+,z_i^-)$ of  
marked points is carried by the contracted components of the elements in
$\ti\M_{\T,\f}(\cC)^\phi$.
For $e\inn \E^{\si}_{\bu;\ge}(\T)$, let 
$$S_{\bu;e}'^{\si}\equiv\big\{e,\si(e)\big\} \subset S_{\bu;e}^{\si}$$
be the complement of the markings $\{1^+,1^-,\ldots,l^+,l^-\}$.
Let 
$$\wh\M_{\T,\f}'(\cC)^\phi=
\prod_{e\in\E^\si_{\bu;+}(\T)}\hspace{-.2in}\ov\cM_{2\g(e/w_0)-1,(S_{\bu;e}'^{\si},\si)}^{\bu}
\times \prod_{e\in\E^\si_{\bu;\R}(\T)}\hspace{-.2in}\R\ov\cM_{\g(e/w_0),(S_{\bu;e}'^{\si},\si)}
\times\wh\cC_{\T}\,.$$
We denote by 
$$\cD_{\T}'^{\phi}\!: \ti\F'\T^{\si}\lra\Obs_\T'^{\phi}$$
the vector bundle homomorphism over  $\wh\M_{\T,\f}'(\cC)^\phi$ defined analogously 
to the vector bundle homomorphism $\cD_{\T}^{\phi}$ over $\wh\M_{\T,\f}(\cC)^\phi$.\\

\noindent
Let
$$\ff\!: \wh\M_{\T,\f}(\cC)^\phi\lra\wh\M_{\T,\f}'(\cC)^\phi$$
be the morphism dropping the points  marked by $1^+,1^-,\ldots,l^+,l^-$
that are carried by the contracted components.
Thus,
$$\ti\F\T^{\si}\big|_{\ti\M_{\T,\f}(\cC)^\phi}=
\ff^*\ti\F'\T^{\si}\big|_{\ti\M_{\T,\f}(\cC)^\phi}, \quad
\Obs_\T^{\phi}=\ff^*\Obs_\T'^{\phi}\,,\quad
\cD_{\T}^{\phi}\big|_{\ti\M_{\T,\f}(\cC)^\phi}
=\ff^*\cD_{\T}'^{\phi}\big|_{\ti\M_{\T,\f}(\cC)^\phi}\,.$$
This implies that $N(\cD_{\T}^{\phi})\!=\!N(\ff^*\cD_{\T}'^{\phi})$,
with the second number defined analogous to the former.
If $\fd\!\neq\!\fd_0(\ft)$, then
$$\dim\,\wh\M_{\T,\f}'(\cC)^\phi+\rk\,\ti\F'\T^{\si}
<\dim\,\ti\M_{\T,\f}(\cC)^\phi+\rk\,\ti\F\T^{\si}
=\rk\,\Obs_\T^{\phi}=\rk\,\Obs_\T'^{\phi}\,.$$
Thus, there exists a section $\vp'$ of $\Obs_\T'^{\phi}$ so that the affine bundle~map
$$\psi_{\T;\vp}'\!: \ti\F'\T^{\si}\lra\Obs_\T'^{\phi}, \qquad
\psi_{\T;\vp}'(\up)=\cD_{\T}'^{\phi}(\up)+\vp(\u)
\quad\forall~\up\!\in\!\ti\F'\T^{\si}|_{\u},~\u\!\in\!\wh\M_{\T,\f}'(\cC)^\phi,$$
over $\wh\M_{\T,\f}'(\cC)^\phi$ does not vanish.
This implies that the affine bundle~map
$$\ff^*\psi_{\T;\vp}'\!: \ti\F\T^{\si}\lra\Obs_\T^{\phi}, \quad
\big\{\ff^*\psi_{\T;\vp}'\big\}(\up)=\big\{\ff^*\cD_{\T}'^{\phi}\big\}(\up)+\big\{\ff^*\vp\}(\u)
~~\forall~\up\!\in\!\ti\F\T^{\si}|_{\u},~\u\!\in\!\wh\M_{\T,\f}(\cC)^\phi,$$
does not vanish and so 
$$N(\cD_{\T}^{\phi})=N(\ff^*\cD_{\T}'^{\phi})=0.$$
This establishes the last claim of Proposition~\ref{PrpNoContr}.
\end{proof}

\subsection{The contributions of the main boundary strata}
\label{MainStrataContr_subs}

We continue with the setup and notation in the proof of Proposition~\ref{PrpNoContr}
and~set
$$\wch\M_{\T,\f}(\cC)^\phi= 
\prod_{e\in\E^\si_{\bu;+}(\T)}\hspace{-.2in}\ov\cM_{2\g(e/w_0)-1,(S_{\bu;e}^{\si},\si)}^{\bu}
\times \prod_{e\in\E^\si_{\bu;\R}(\T)}\hspace{-.2in}\R\ov\cM_{\g(e/w_0),(S_{\bu;e}^{\si},\si)}
\times\cC_{\T},$$
where $\cC_{\T}\!\subset\!\cC^{\E_{\bu}(\T)}$ is the closure of $\cC_{\T}^*$.
Thus, 
$$\cC_{\T}= \big(\cC^{\phi}\big)^{\E_{\bu;\R}^{\si}(\T)}\times\cC_{\T;\C}\,,$$
where $\cC_{\T;\C}\!\subset\!\cC^{\E_{\bu;\C}^{\si}(\T)}$ is the fixed locus
of the involution given by~\e_ref{Csidfn_e} with $\E_0(\T)$ replaced by~$\E_{\bu;\C}^{\si}(\T)$.
We denote by 
$$\wch\cD_{\T}^{\phi}\!: \wch\F\T^{\si}\lra\wch\Obs_\T^{\phi}$$
the vector bundle homomorphism over  $\wch\M_{\T,\f}(\cC)^\phi$ defined analogously 
to the vector bundle homomorphism $\cD_{\T}^{\phi}$ over $\wh\M_{\T,\f}(\cC)^\phi$.\\

Let 
$$q\!:\wh\M_{\T,\f}(\cC)^\phi\lra \wch\M_{\T,\f}(\cC)^\phi$$
be the map induced by the natural projection $\wh\cC_{\T}\!\lra\!\cC_{\T}$.
Since
$$\ti\F\T^{\si}\big|_{\ti\M_{\T,\f}(\cC)^\phi}=
q^*\wch\F\T^{\si}\big|_{\ti\M_{\T,\f}(\cC)^\phi}, \quad
\Obs_\T^{\phi}=q^*\wch\Obs_\T^{\phi}\,,\quad
\cD_{\T}^{\phi}\big|_{\ti\M_{\T,\f}(\cC)^\phi}
=q^*\wch\cD_{\T}^{\phi}\big|_{\ti\M_{\T,\f}(\cC)^\phi},$$
and $q$ is the identity on $\ti\M_{\T,\f}(\cC)^\phi$,
\BE{eqnums_e}N\big(\cD_{\T}^{\phi}\big)=N\big(q^*\wch\cD_{\T}^{\phi}\big)
=N\big(\wch\cD_{\T}^{\phi}\big).\EE
Since the contributions of the boundary strata of Propositions~\ref{PrpOppPar} and~\ref{PrpSamPar}
are given by~\e_ref{BasicContr_e}, it remains to determine the number~$N(\wch\cD_{\T}^{\phi})$.

\begin{proof}[{\bf\emph{Proof of Proposition~\ref{PrpOppPar}}}]
Let 
$$\ff\!:\wch\M_{\T,\f}(\cC)^\phi\lra
\wch\M_{\T,\f}'(\cC)^\phi\!\equiv\!
\prod_{e\in\E^\si_{\bu;+}(\T)}\hspace{-.2in}\ov\cM_{2\g(e/w_0)-1,(S_{\bu;e}^{\si},\si)}^{\bu}
\times \prod_{e\in\E^\si_{\bu;\R}(\T)}\hspace{-.2in}\R\ov\cM_{\g(e/w_0),(S_{\bu;e}^{\si},\si)}
\times\cC_{\T;\C}$$
be the natural projection.
We denote by 
$$\wch\cD_{\T}'^{\phi}\!: \wch\F'\T^{\si}\lra\wch\Obs_\T'^{\phi}$$
the vector bundle homomorphism over  $\wch\M_{\T,\f}'(\cC)^\phi$ defined analogously 
to the vector bundle homomorphism $\cD_{\T}^{\phi}$ over $\wh\M_{\T,\f}(\cC)^\phi$,
but for each $e\!\in\!\E^\si_{\bu;\R}(\T)$ we replace 
the factor $\pi^*_{\cC;\lr{e}}T\cC$ appearing in the definition
of $\ti\F\T\!=\!\ti\fF\T$ in~\e_ref{FT_e} by the trivial complex line bundle
(with the standard conjugation)
and the factor $\pi_{\cC;\lr{\tne}}^*TX$ in~\e_ref{ObsTCdfn_e}
by the trivial rank~3 complex vector bundle.
Since $\cC^{\phi}$ is a disjoint union of circles and the restriction of every orientable vector 
bundle to a circle is trivial,
$$\wch\F\T^{\si}=
\ff^*\wch\F'\T^{\si}\big|_{\ti\M_{\T,\f}(\cC)^\phi}, \quad
\wch\Obs_\T^{\phi}=\ff^*\wch\Obs_\T'^{\phi}\,,\quad
\wch\cD_{\T}^{\phi}=\ff^*\wch\cD_{\T}'^{\phi}$$
and thus $N(\wch\cD_{\T}^{\phi})\!=\!N(\ff^*\wch\cD_{\T}'^{\phi})$.
If $\E_{\bu;\R}^{\si}(\T)\!\neq\!\eset$, then
$$\dim\,\wch\M_{\T,\f}'(\cC)^\phi+\rk\,\wch\F'\T^{\si}
<\dim\,\ti\M_{\T,\f}(\cC)^\phi+\rk\,\ti\F\T^{\si}
=\rk\,\Obs_\T^{\phi}=\rk\,\wch\Obs_\T'^{\phi}.$$
As in the last paragraph of the proof of Proposition~\ref{PrpNoContr},
this implies that $N(\ff^*\wch\cD_{\T}'^{\phi})\!=\!0$.
Proposition~\ref{PrpOppPar} now follows from~\e_ref{BasicContr_e} and~\e_ref{eqnums_e}.
\end{proof}

\begin{proof}[{\bf\emph{Proof of Proposition~\ref{PrpSamPar}}}]
We continue with the setup and notation in the proof of Proposition~\ref{PrpOppPar}.
We now assume that $\ft$ is a basic topological type, $\T\eq(\ft,\fd_0(\ft))$,
and $\E_{\bu;\R}^{\si}(\T)\!=\!\eset$.  
Let $m\!=\!|\E_{\bu;+}^{\si}(\T)|$ so that 
\BE{tiAutAut_e} \big|\ti\Aut^*(\T)\big|=2^m \big|\Aut^*(\T)\big|,\EE
with the groups $\Aut^*(\T)$ and $\ti\Aut^*(\T)$ as in Section~\ref{Strata_subs}.
With the notation as in~\e_ref{bELdfn_e} and~\e_ref{pidfn_e}, we define
the bundle homomorphism
$$\cD_{g'}\!: \pi_{g'}^*L\!\otimes\!\pi_{\cC}^*T\cC\lra
\pi_{g'}^*\bE^*\!\otimes\!\pi_{\cC}^*TX$$ 
over  $\ov\cM_{g',1}\!\times\!\cC$  analogously to~\e_ref{DT_e}.\\

\noindent
For each $e\!\in\!\E^\si_{\bu;+}(\T)$, let
$$\ov\cM_{\T;e}^{\bu}(\cC)
=\ov\cM_{2\g(e/w_0)-1,(\{e,\si(e)\},\si)}^{\bu}
\times\big\{(z^+,z^-)\!\in\!\cC^2\!:\,z^+\!=\!\phi(z^-)\big\}$$
and denote by 
$$\pi_{\T;e},\pi_{\cC;+},\pi_{\cC;-}\!: \ov\cM_{\T;e}^{\bu}(\cC)
\lra\ov\cM_{2\g(e/w_0)-1,(\{e,\si(e)\},\si)}^{\bu},\cC,\cC$$
the three component projection maps.
Let
$$L_+,L_-,\bE_+,\bE_-\lra \ov\cM_{\T;e}^{\bu}(\cC) $$
be the universal tangent line bundles at the marked points indexed by~$e$ and~$\si(e)$
and the subbundles of the Hodge bundle~$\bE$ consisting of the holomorphic differentials 
supported on the first and second component of each doublet
(the first component carries the marked point indexed by~$e$).
Denote~by
\BE{FeObsdfn_e}\begin{split}
\F_e^{\si}&\subset \pi_{\T;e}^*L_+\!\otimes\!\pi_{\cC;+}^*T\cC 
\oplus \pi_{\T;e}^*L_-\!\otimes\!\pi_{\cC;-}^*T\cC  \qquad\hbox{and}\\
\Obs_e^{\phi}&\subset \pi_{\T;e}^*\bE_+^*\!\otimes\!\pi_{\cC;+}^*TX 
\oplus \pi_{\T;e}^*\bE_-^*\!\otimes\!\pi_{\cC;-}^*TX
\end{split}\EE
the fixed loci of the conjugations over the identity induced by the involution~$\phi$ 
as in Section~\ref{Strata_subs} and~by
$$\cD_e^{\phi}\!:\F_e^{\si}\lra \Obs_e^{\phi}$$
the vector bundle homomorphism over $\ov\cM_{\T;e}^{\bu}(\cC)$ defined in the same way
as~before.\\

The projections
$$\ov\cM_{2\g(e/w_0)-1,(\{e,\si(e)\},\si)}^{\bu}\lra\ov\cM_{\g(e/w_0),1}
\quad\hbox{and}\quad
 \big\{(z^+,z^-)\!\in\!\cC^2\!:\,z^+\!=\!\phi(z^-)\big\}\lra\cC$$
on the first component in both cases are diffeomorphisms and induce
orientations on their domains from the complex orientations on their targets;
we will call the former orientations \sf{the complex orientations}.
The~map 
$$ \ov\cM_{\T;e}^{\bu}(\cC)\lra 
\ov\cM_{\g(e/w_0),1}\!\times\!\cC$$
induced by the above projections naturally lifts to vector bundle isomorphisms
\BE{FeIsom_e}\F_e^{\si}\lra \pi_{\g(e/w_0)}^*L\!\otimes\!\pi_{\cC}^*T\cC
\qquad\hbox{and}\qquad
\Obs_e^{\phi}\lra \pi_{\g(e/w_0)}^*\bE^*\!\otimes\!\pi_{\cC}^*TX\EE
obtained by projecting to the first components in~\e_ref{FeObsdfn_e}.
These projections induce  orientations on their domains from the complex orientations on their targets;
we will call the former orientations \sf{the complex orientations}.
Since the isomorphisms~\e_ref{FeIsom_e}  intertwine
the vector bundle homomorphisms~$\cD_e^{\phi}$ and~$\cD_{\g(e/w_0)}$,
\BE{NCeq_e} N\big(\cD_e^{\phi}\big)_{\C}=N\big(\cD_{\g(e/w_0)}\big),\EE
where the two sides denote the numbers of zeros of generic affine bundle maps
associated with $\cD_e^{\phi}$ and~$\cD_{\g(e/w_0)}$ with respect to
the complex orientations on their domain and target vector bundles
and the bases of these vector bundles.\\

In the case of Proposition~\ref{PrpSamPar},
\BE{wchMsplit_e}\wch\M_{\T,\f}(\cC)^\phi= 
\prod_{e\in\E^\si_{\bu;+}(\T)}\hspace{-.2in}\ov\cM_{2\g(e/w_0)-1,(\{e,\si(e)\},\si)}^{\bu}
\times\cC_{\T}
=\prod_{e\in\E^\si_{\bu;+}(\T)}\hspace{-.2in}\ov\cM_{\T;e}^{\bu}(\cC)\,.\EE
With 
$$\pi_e\!: \wch\M_{\T,\f}(\cC)^\phi \lra \ov\cM_{\T;e}^{\bu}(\cC)$$
denoting the component projection map,
\BE{cDTsplit_e}
\wch\cD_{\T}^{\phi}\!=\!\bigoplus_{e\in\E^\si_{\bu;+}(\T)}\!\!\!\!\!\!\!\pi_e^*\cD_e^{\phi}\!:
\wch\F\T^{\si}\!=\!\bigoplus_{e\in\E^\si_{\bu;+}(\T)}\!\!\!\!\!\!\!\pi_e^*\F_e^{\si}
\lra
\wch\Obs_\T^{\phi}\!=\!\bigoplus_{e\in\E^\si_{\bu;+}(\T)}\!\!\!\!\!\!\!\pi_e^*\Obs_e^{\phi}\,.\EE
The identifications in~\e_ref{wchMsplit_e} and~\e_ref{cDTsplit_e} induce orientations on
their domains from the complex orientations of their targets defined in the previous paragraph.
By~\e_ref{NCeq_e} and the first equality in~\e_ref{cDTsplit_e}, 
\BE{NCeq_e2} N\big(\wch\cD_{\T}^{\phi}\big)_{\C}=
\prod_{e\in\E^\si_{\bu;+}(\T)}\hspace{-.2in}N\big(\cD_{\g(e/w_0)}\big),\EE
where the left-hand side denotes the number of zeros of a generic affine bundle map
associated with $\wch\cD_{\T}^{\phi}$
with respect to
the complex orientations on its domain and target vector bundles
and the base of these vector bundles.\\

\noindent
The curve $\cC\!\subset\!X$ corresponds to an element $[\u]$ of $\M_{h,\f}^*(X,B;J)^\phi$
as around~\e_ref{u_e}.
The real orientation on~$(X,\om,\phi)$ fixed in Section~\ref{intro_sec} determines the sign,
$$\sgn_{\f}(\cC)=\sgn(\u)\in\big\{\pm1\big\},$$ 
of~$[\u]$.
The open subspace~$\cC_{\T}^*$ of~$\cC_{\T}$ corresponds to a family of maps
with $m$~additional conjugate pairs of marked points.
The real orientation on~$(X,\om,\phi)$ and the choice of 
the subset~$\E^\si_{\bu;+}(\T)$ of $\E_{\bu}(\T)$ determine an orientation on~$\cC_{\T}$.
It differs from the complex orientation by~$\sgn_{\f}(\cC)$.\\

\noindent
For each $g'\!\in\!\Z^+$,
the real orientation on $(X,\om,\phi)$ determines an orientation on the moduli space
$\ov\M_{2g'-1,(1^+,1^-)}^{\bu}(X,0;J)^\phi$ of $J$-holomorphic degree~0 maps
from $g'$-doublets with one conjugate pair of marked points.
As topological spaces
\BE{Mdoubsplit_e}\ov\M_{2g'-1,(1^+,1^-)}^{\bu}(X,0;J)^\phi \approx 
\ov\cM_{2g'-1,(1^+,1^-)}^{\bu}\!\times\!
\big\{\big(x^+,x^-)\!\in\!X^2\!:\,x_+\!=\!\phi(x_-)\big\},\EE
but the moduli space on the left-hand side comes with an associated obstruction bundle
\BE{Obs0dfn_e} \Obs_X^{\phi}\lra 
\ov\cM_{2g'-1,(1^+,1^-)}^{\bu}\!\times\!\big\{\big(x,\phi(x)\big)\!:\,x\!\in\!X\big\};\EE
the latter is the fixed locus of the involution~on 
$$\pi_{g'}^*\bE_+^*\!\otimes\!\pi_+^*TX \oplus  \pi_{g'}^*\bE_-^*\!\otimes\!\pi_-^*TX
\lra \ov\cM_{2g'-1,(1^+,1^-)}^{\bu}\!\times\!
\big\{\big(x^+,x^-)\!\in\!X^2\!:\,x_+\!=\!\phi(x_-)\big\}$$
induced by~$\phi$ as in Section~\ref{Strata_subs}.
In particular, the projection to the first component
\BE{Obs0dfn_e2} \Obs_X^{\phi}\lra \pi_{g'}^*\bE_+^*\!\otimes\!\pi_+^*TX\EE
is an isomorphism.
The last factors in~\e_ref{Mdoubsplit_e} and~\e_ref{Obs0dfn_e2}
are oriented from the (almost) complex orientation of~$X$ 
(by projecting to the first component in the case of~\e_ref{Mdoubsplit_e}).
Along with the complex orientations on the first factors on the right-hand sides
of~\e_ref{Mdoubsplit_e} and~\e_ref{Obs0dfn_e2}, these orientations on the last factors
induce an orientation on the total space of the vector bundle in~\e_ref{Obs0dfn_e};
we will call this orientation \sf{the complex orientation}.
By \cite[Theorem~1.4]{RealGWsII}, the orientation on  the total space of this vector bundle
induced by the real orientation on $(X,\om,\phi)$ differs 
from the complex orientation by~$(-1)^{g'-1}$.\\ 

\noindent
By \cite[Theorem~1.2]{RealGWsII}, the immersion~\e_ref{Iota_e} 
is orientation-preserving with respect to the orientations on its domain and target
induced  by a real orientation on $(X,\om,\phi)$ if and only if $m\!\in\!2\Z$.
Combining this with~\e_ref{NCeq_e2} and the last two paragraphs, we conclude that 
\begin{equation*}\begin{split}
N\big(\wch\cD_{\T}^{\phi}\big)
&= (-1)^m\sgn_{\f}(\cC)\!\!\!\!\!\!\!\!  \prod_{e\in\E^\si_{\bu;+}(\T)}\hspace{-.2in}
\big((-1)^{\g(e/w_0)-1}\!N\big(\cD_{\g(e/w_0)}\big)\big)\\
&= \sgn_{\f}(\cC)\!\!\!\!\!\!\!\! 
\prod_{e\in\E^\si_{\bu;+}(\T)}\hspace{-.2in}
\big((-1)^{\g(e/w_0)}\!N\big(\cD_{\g(e/w_0)}\big)\big).
\end{split}\end{equation*}
The claim now follows from~\e_ref{BasicContr_e}, \e_ref{tiAutAut_e}, and Lemma~\ref{NcDg_lmm} below.
\end{proof}

\begin{lmm}\label{NcDg_lmm}
With notation as above,
$$N(\cD_{g'})=\int_{\ov\cM_{g',1}\times\cC}\!\!
 c_{2g'}\big(\pi_{g'}^*\bE^*\!\otimes\!\pi_\cC^*\cN_X\cC\big)
\bigg(\sum_{r=0}^{g'-1}(-1)^r\la_r\psi^{g'-1-r}\bigg)
.$$
\end{lmm}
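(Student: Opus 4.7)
My plan is to identify $N(\cD_{g'})$ with the Euler number of the cokernel of $\cD_{g'}$, exploit the product structure $O\!=\!\bE^*\!\otimes\!T\cC\oplus\bE^*\!\otimes\!\cN_X\cC$ on $\cC$, and simplify using $\dim_\C\!\cC\!=\!1$ and Mumford's relation $\la_{g'}^2\!=\!0$.

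The homomorphism $\cD_{g'}\!:\,F\!=\!\pi_{g'}^*L\!\otimes\!\pi_\cC^*T\cC\!\lra\!\pi_{g'}^*\bE^*\!\otimes\!\pi_\cC^*TX\!=\!O$ sends $\up\!\otimes\!v\!\mapsto\!(\ka\!\mapsto\!\ka(\up)v)$ with $\ka\!\in\!\bE$, $v\!\in\!T\cC\!\subset\!TX|_\cC$, so its image lies in $\pi_{g'}^*\bE^*\!\otimes\!\pi_\cC^*T\cC$ and the cokernel splits as
\begin{equation*}
\cok\cD_{g'}\;=\;\big((\bE^*-L)\otimes T\cC\big)\;\oplus\;\big(\bE^*\otimes\cN_X\cC\big)
\end{equation*}
in K-theory (with summands of virtual rank $g'\!-\!1$ and $2g'$ respectively). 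Since $\dim_\C M\!=\!3g'\!-\!1\!=\!\rk_\C\cok\cD_{g'}$, a standard Porteous/excess-intersection argument identifies the signed count of zeros of a generic affine perturbation $\cD_{g'}(\up)\!+\!\vp(\pi_F(\up))$ with the integrated Euler class
\begin{equation*}
N(\cD_{g'})\;=\;\int_{\ov\cM_{g',1}\times\cC}\!\!c_{g'-1}\big((\bE^*\!-\!L)\!\otimes\!T\cC\big)\cdot c_{2g'}(\bE^*\!\otimes\!\cN_X\cC),
\end{equation*}
treating the first summand virtually to accommodate the generic degeneracy of $\ev\!:\,L\!\hookrightarrow\!\bE^*$ on boundary strata of~$\ov\cM_{g',1}$.

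Next I simplify the integrand using $\dim_\C\!\cC\!=\!1$. Setting $\tau\!=\!c_1(T\cC)$ and $\rho\!=\!c_1(\cN_X\cC)$, one has $\tau^2\!=\!\rho^2\!=\!c_2(\cN_X\cC)\!=\!\tau\rho\!=\!0$ on~$\cC$. The tensor-product Chern-class formula $c_k(V\!\otimes\!T\cC)\!=\!\sum_j\binom{r-j}{k-j}c_j(V)\tau^{k-j}$, applied with $V\!=\!\bE^*\!-\!L$ of virtual rank $r\!=\!g'\!-\!1$ and $k\!=\!g'\!-\!1$, yields
\begin{equation*}
c_{g'-1}\big((\bE^*\!-\!L)\!\otimes\!T\cC\big)\big|_\cC\;=\;c_{g'-1}(\bE^*\!-\!L)\;+\;\tau\!\cdot\!c_{g'-2}(\bE^*\!-\!L),
\end{equation*}
while \e_ref{ECexpand_e} (which invokes $\la_{g'}^2\!=\!0$) gives $c_{2g'}(\bE^*\!\otimes\!\cN_X\cC)\!=\!-\la_{g'-1}\la_{g'}\rho$ on~$\cC$. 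Multiplying, the $\tau\rho$-cross term vanishes and the product collapses to $c_{g'-1}(\bE^*\!-\!L)\cdot c_{2g'}(\bE^*\!\otimes\!\cN_X\cC)$. Finally, $c_{g'-1}(\bE^*\!-\!L)$ is the degree-$(g'\!-\!1)$ component of $c(\bE^*)/c(L)\!=\!c(\bE^*)(1\!+\!\psi\!+\!\psi^2\!+\!\cdots)$, which equals $\sum_{r=0}^{g'-1}(-1)^r\la_r\psi^{g'-1-r}$; substituting gives the lemma.

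The main obstacle I anticipate is the first step—rigorously identifying $N(\cD_{g'})$ with this particular Euler-class integral. The evaluation $\ev\!:\,L\!\hookrightarrow\!\bE^*$ generically degenerates on positive-codimension boundary strata of $\ov\cM_{g',1}$, forcing the use of virtual rather than naive Chern classes of $\cok\cD_{g'}$; verifying that the resulting Porteous-style identity still equates the zero count of the affine bundle map $\cD_{g'}\!+\!\vp$ with the asserted integral requires care both in the transverse-intersection analysis on the compactification and in handling the orbifold structure of $\ov\cM_{g',1}$ (in particular, checking that any other Chern-class contributions of the formal cokernel vanish after integration, which for $g'\!=\!1$ reduces to the elementary identity $\int_{\ov\cM_{1,1}}\!\la_1\!=\!\int_{\ov\cM_{1,1}}\!\psi$).
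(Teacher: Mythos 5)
Your Chern-class algebra is correct and matches the endgame of the paper's computation: the splitting of the virtual cokernel into $(\bE^*\!-\!L)\!\otimes\!T\cC$ and $\bE^*\!\otimes\!\cN_X\cC$, the collapse of the tensor-product Chern classes using $\tau^2\!=\!\tau\rho\!=\!0$ on $\cC$ and $\la_{g'}^2\!=\!0$, and the identity $c_{g'-1}(\bE^*\!-\!L)=\sum_r(-1)^r\la_r\psi^{g'-1-r}$ are all as in the paper. But the step you flag as ``the main obstacle'' is not a technicality to be deferred --- it is the entire content of the lemma, and as written your proof has a genuine gap there. Since $\cD_{g'}$ vanishes identically on $\De_{g'}\!\times\!\cC$ (curves whose marked point lies on a smooth rational component, where every section of $\bE$ vanishes), the signed count of zeros of the affine map $\cD_{g'}\!+\!\vp$ is \emph{not} automatically the Euler number of the K-theoretic cokernel; the general statement (this is \cite[Lemma~3.14]{g2n2and3}, which the paper invokes) is that it equals that Euler number \emph{minus} a correction $\Cntr_{\cD_{g'}^{-1}(0)\cap\cZ_{\vp'}}(\cD_{g'}^{\perp})$ supported on the degeneracy locus. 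Your proposal never shows this correction vanishes, and your suggested sanity check for $g'\!=\!1$ (the identity $\int_{\ov\cM_{1,1}}\la_1=\int_{\ov\cM_{1,1}}\psi$) is not the relevant mechanism.

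The missing argument runs as follows. One first replaces $\vp$ by its $\cN_X\cC$-component $\vp'$ and restricts the whole problem to the smooth suborbifold $\cZ_{\vp'}=\vp'^{-1}(0)$, which is Poincar\'e dual to $c_{2g'}(\pi_{g'}^*\bE^*\!\otimes\!\pi_\cC^*\cN_X\cC)$; this is what turns your ``product of two factors'' heuristic into an actual intersection. The excess-intersection lemma then expresses the correction as a sum $\sum_i d_i\lan e(V_i),[(\De_{g';i}\!\times\!\cC)\cap\cZ_{\vp'}]\ran$ over subvarieties $\De_{g';i}\!\subset\!\De_{g'}$. By Poincar\'e duality and~\e_ref{ECexpand_e}, each such pairing carries the factor $\la_{g'-1}\la_{g'}$ restricted to $\De_{g';i}$, and by Faber's result \cite[Lemma~1]{Faber} this class vanishes on $\ov\cM_{g',1}\!-\!\cM_{g',1}$, hence on $\De_{g'}$. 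Only after this vanishing is established does $N(\cD_{g'})$ reduce to the clean Euler-class integral you start from, and the rest of your computation then goes through.
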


\begin{proof}
By definition $N(\cD_{g'})$ is the signed cardinality of the zero set
of the affine bundle~map
\begin{gather*}
\psi_{g';\vp}\!: \pi_{g'}^*L\!\otimes\!\pi_{\cC}^*T\cC\lra
\pi_{g'}^*\bE^*\!\otimes\!\pi_{\cC}^*TX, \\
\psi_{g';\vp}(\up)=\cD_{g'}(\up)+\vp(\u)
\quad\forall~\up\!\in\!\pi_{g'}^*L\!\otimes\!\pi_{\cC}^*T\cC\big|_{\u},
~\u\!\in\!\ov\cM_{g',1}\!\times\!\cC,
\end{gather*}
with respect to the complex orientations on the domain and target vector bundles
and the complex orientation on their base, for a generic section~$\vp$
of the target bundle.
Since $\vp$ is generic, it spans a trivial complex subbundle $\C\vp$ 
in $\pi_{g'}^*\bE^*\!\otimes\!\pi_{\cC}^*TX$.
The composition of~$\vp$ with the tensor product of the identity and the natural
projection $TX|_{\cC}\!\lra\!\cN_X\cC$ induces a section~$\vp'$ of 
the vector bundle
$$\pi_{g'}^*\bE^*\!\otimes\!\pi_{\cC}^*\cN_X\cC
\lra \ov\cM_{g',1}\!\times\!\cC\,.$$
Since~$\vp$ is generic, the zero set $\cZ_{\vp'}$ of this section is 
a smooth oriented sub-orbifold.\\

\noindent
We denote by $\De_{g'}\!\subset\!\ov\cM_{g',1}$ the subspace of marked curves so that 
the irreducible component containing the marked point is a smooth rational curve.
The vector bundle homomorphism~$\cD_{g'}$ corresponds to a section of the bundle
$$\big(\pi_{g'}^*L\!\otimes\!\pi_{\cC}^*T\cC\big)^*\otimes
\pi_{g'}^*\bE^*\!\otimes\!\pi_{\cC}^*TX\lra  \ov\cM_{g',1}\!\times\!\cC,$$
which we denote in the same way.
We note~that 
\BE{cDg0_e}\cD_{g'}^{-1}(0)= \De_{g'}\!\times\!\cC \subset \ov\cM_{g',1}\!\times\!\cC\,.\EE
Along with the projection 
$$\pi_{g'}^*\bE^*\!\otimes\!\pi_{\cC}^*TX\lra 
\pi_{g'}^*\bE^*\!\otimes\!\pi_{\cC}^*TX\big/\C\vp\,,$$
the section~$\cD_{g'}$  induces a section of the bundle
$$\big(\pi_{g'}^*L\!\otimes\!\pi_{\cC}^*T\cC\big)^*\otimes
(\pi_{g'}^*\bE^*\!\otimes\!\pi_{\cC}^*TX\big/\C\vp\big)\lra \ov\cM_{g',1}\!\times\!\cC.$$
Since the image of $\cD_{g'}$ lies in $\pi_{g'}^*\bE^*\!\otimes\!\pi_{\cC}^*T\cC$,
the latter restricts to a section of the bundle
$$V\equiv \big(\pi_{g'}^*L\!\otimes\!\pi_{\cC}^*T\cC\big)^*\otimes
\big(\pi_{g'}^*\bE^*\!\otimes\!\pi_{\cC}^*T\cC\big/\C\vp\big)\lra \cZ_{\vp'}\,;$$
we denote it by $\cD_{g'}^{\perp}$.\\ 

\noindent
Since the image of $\cD_{g'}$ lies in $\pi_{g'}^*\bE^*\!\otimes\!\pi_{\cC}^*T\cC$,
 $N(\cD_{g'})$ is the signed cardinality of the zero set of the affine bundle~map
$$\psi_{g';\vp}'\!: \pi_{g'}^*L\!\otimes\!\pi_{\cC}^*T\cC\lra
\pi_{g'}^*\bE^*\!\otimes\!\pi_{\cC}^*T\cC, \quad
\psi_{g';\vp}'(\up)=\psi_{g';\vp}(\up)
~~\forall~\up\!\in\!\pi_{g'}^*L\!\otimes\!\pi_{\cC}^*T\cC\big|_{\cZ_{\vp'}}.$$
Since the linear part of this bundle map is holomorphic, 
it is a regular polynomial  in the sense of \cite[Definition~3.9]{g2n2and3}.
By \cite[Lemma~3.14]{g2n2and3},
\BE{cDg_e} N(\cD_{g'})=\blr{e(V),\big[\cZ_{\vp'}\big]}-
\Cntr_{\cD_{g'}^{-1}(0)\cap\cZ_{\vp'} }\!\big(\cD_{g'}^{\perp}\big).\EE
The last term above is the \sf{$\cD_{g'}^{\perp}$-contribution} to the middle term from 
the subspace $\cD_{g'}^{-1}(0)\!\cap\!\cZ_{\vp'}$ of~$\cZ_{\vp'}$;
this notion is the direct analogue of Definition~\ref{DfnContri} in this setting.
By \cite[Propositions~2.18A,B]{g0pr} and~\e_ref{cDg0_e}, 
\BE{ContrSum_e}\Cntr_{\cD_{g'}^{-1}(0)\cap\cZ_{\vp'} }\!\big(\cD_{g'}^{\perp}\big)
=\sum_i d_i\blr{e(V_i),\big[(\De_{g';i}\!\times\!\cC)\!\cap\!\cZ_{\vp'}\big]}\EE
for some $d_i\!\in\!\Z$, closed subvarieties $\De_{g';i}\!\subset\!\De_{g'}$, and vector bundles~$V_i$
over $\De_{g';i}\!\times\!\cC$; these are determined by the scheme structure of 
$\cD_{g'}^{-1}(0)\!\subset\!\ov\cM_{g',1}$.\\

\noindent 
By the Poincare Duality, \e_ref{ECexpand_e}, and~\e_ref{ECexpand_e2},
\BE{ContrSum_e2}\begin{split}
\blr{e(V_i),\big[(\De_{g';i}\!\times\!\cC)\!\cap\!\cZ_{\vp'}\big]}
&=\blr{e(V_i)e\big(\pi_{g'}^*\bE^*\!\otimes\!\pi_{\cC}^*\cN_X\cC\big),\big[\De_{g';i}\!\times\!\cC\big]}\\
&=\big(2\!-\!2h\!-\!c_1(B)\big)\blr{e(V_i)\la_{g'-1}\la_{g'},\big[\De_{g';i}\big]}\,.
\end{split}\EE
By \cite[Lemma~1]{Faber}, $\la_{g'-1}\la_{g'}$ vanishes on $\ov\cM_{g',1}\!-\!\cM_{g',1}$ 
and thus on~$\De_{g';i}$.
Combining this observation with~\e_ref{cDg_e},  \e_ref{ContrSum_e}, and~\e_ref{ContrSum_e2}, 
we conclude~that 
\begin{equation*}\begin{split}
N(\cD_{g'})&=\blr{e\big((\pi_{g'}^*L\!\otimes\!\pi_{\cC}^*T\cC)^*\!\otimes\!
(\pi_{g'}^*\bE^*\!\otimes\!\pi_{\cC}^*T\cC/\C\vp)\big),\big[\cZ_{\vp'}\big]}\\
&=\blr{e\big((\pi_{g'}^*\bE^*\!\otimes\!\pi_{\cC}^*T\cC)/
(\pi_{g'}^*L\!\otimes\!\pi_{\cC}^*T\cC)\big),\big[\cZ_{\vp'}\big]}
=\blr{e\big((\pi_{g'}^*\bE^*)/(\pi_{g'}^*L)\big),\big[\cZ_{\vp'}\big]}\\
&=\sum_{r=0}^{g'-1}\!\!\blr{(-1)^r\la_r\psi^{g'-1-r},\big[\cZ_{\vp'}\big]}
=\sum_{r=0}^{g'-1}\int_{\ov\cM_{g',1}\times\cC}\!\!(-1)^r\la_r\psi^{g'-1-r}
 c_{2g'}\big(\pi_{g'}^*\bE^*\!\otimes\!\pi_\cC^*\cN_X\cC\big).
\end{split}\end{equation*}
The third equality above holds because $\pi_{\cC}^*c_1(T\cC)$ vanishes on~$\cZ_{\vp'}$
by~\e_ref{ECexpand_e} and the Poincare Duality;
the last equality holds by the Poincare Duality.
\end{proof}

\section{Proof of Proposition~\ref{PrpSection}}
\label{SecAnalytic}

Continuing with the notation of Section~\ref{SecPrps},
we set up a two-step gluing construction to prove Proposition~\ref{PrpSection}.
For each $\T\inn\sT_{g,l}(\cC)^\phi$ as in~\e_ref{RDG_e},
we first smooth out all nodes corresponding to~$\Edg^0$ in~\e_ref{CsubRDG_e} 
and~$\Edg^c$ in~\e_ref{TComplement_e}.
This step is not obstructed.
At the second stage,
we smooth out the remaining nodes,
i.e.~the nodes corresponding to $\E_\bu(\T)$; see~\e_ref{EdgDecomp_e}.
This step is obstructed.
We will use the same strategy as in~\cite[Sections~3.2,4.2,4.3]{g1comp} 
and~\cite[Section~3.3]{g1comp2}.\\

For a generic choice of the pseudocycles~$\f$, the projection 
$$  \ov\M_{g,\f}(X,B;J)^\phi\lra \ov\M_{g,l}\big(X,B;J\big)$$
to the moduli space component in~\e_ref{MBarF_e} is an embedding. 
Thus, we can view  
$\fM_{\T,\f}(\cC)^\phi$ as the subset of the elements of
$\ov\M_{g,l}(X,B;J)$ which map to the curve $\cC\!\subset\!X$
and meet the pseudocycles~$\f$ at the marked points.\\

\noindent
Let $(S_0,\si_0)$ be as in~\e_ref{Cs0ubRDG_e}.
We denote by $\fM_{h,(S_0,\si_0)}(B)^{\phi}$ the moduli space of 
degree~$B$ $J$-holomorphic real maps from smooth genus~$h$ symmetric 
surfaces with the marked points indexed by the set~$S_0$ with the involution~$\si_0$.
Analogously to~\e_ref{tiM_e}, let
\BE{tifMT_e}\ti\fM_{\T}(X)^\phi= 
	\prod_{e\in\E_{0;\ge}^\si(\T)\sqcup\E^\si_{\bu;\ge}(\T)}\hspace{-.45in}\cM_{\T,\si;e}
\times\fM_{h,(S_0,\si_0)}(B)^{\phi}\EE
and 
$$\io_{\T,\f}\!: \ti\fM_{\T}(X)^\phi
\lra  \ov\M_{g,l}(X,B;J)$$
be the natural $\ti\Aut^*(\T)$-invariant node-identifying immersion extending~\e_ref{Iota_e}.
The image of this immersion is the stratum
$$\fM_{\T}(X)^\phi\subset \ov\M_{g,l}(X,B;J)$$
of stable maps of the topological type~$\T$
(not necessarily passing through the constrains).
For each element $\u$ of $\ti\fM_{\T}(X)^\phi$,
a vertex $w\!\in\!\Ver$ corresponds to 
an irreducible component~$\Si_{\u;w}$ of the domain~$\Si_{\u}$ of
the map~$u$ in the image of this immersion.
Each edge $e\eq(w_e^0,w_e^c)$ in $\E_{\bu}(\T)\!\subset\!\Edg$ corresponds to a node
$x_e(\u)\!\in\!\Si_u$ formed by joining a point 
$x_e^0(\u)$ in $\Si_{\u;e}^0\!\equiv\!\Si_{\u;w_e^0}$ with
a point $x_e^c(\u)$  in $\Si_{\u;e}^c\!\equiv\!\Si_{\u;w_e^c}$.\\

\noindent
We denote the natural extensions of the vector bundles in~\e_ref{ObsTCdfn_e}, \e_ref{Obs_e}, 
and~\e_ref{FT_e} and of the bundle homomorphism in~\e_ref{Thudfn_e}  
to $\ti\fM_{\T}(X)^\phi$ in the same way.
The center component $\Si_{\u;0}\!=\!\Si_{\u;\v_0}$ is no longer identified with~$\cC$.
The bundle $\pi^*_{\cC;\lr{e}}TX$ in~\e_ref{ObsTCdfn_e} should thus be replaced by
 $\pi_{\fM}^*\ev_{\lr{e}}^*TX$,  where $\pi_{\fM}$ is
the projection onto the last factor in~\e_ref{tifMT_e}. 
With notation as in~\e_ref{FT_e}, let
\begin{equation*}\begin{split}
\ti\F_1\T&=\Big(\bigoplus_{e\in\Edg^0\sqcup\Edg^c}\!\!\!\!\!\!\!\!\!\!
  \ti L_{e}~~\Big)\Big/\Aut_{\bu}(\T) \lra \ti\M_{\T}(X)^\phi\,,\\
\ti\fF\T&=
\bigoplus_{e\in\E_\bu(\T)}\!\!\!\!
 \pi^*_{\T;e}L_e\!\otimes\!\pi_{\fM}^*L_{\lr{e}} \lra \ti\M_{\T}(X)^\phi\,.
\end{split}\end{equation*}
We denote by $\up_1\!\in\!\ti\F_1\T$ the image of an element~$\up$ 
of~$\ti{\F}\T$ under the natural projection
$$\ti\F\T \lra \ti\F_1\T\subset \ti\F\T\,.$$
The involution $\si$ acts on $\ti\fF\T$ by
$$\big(\si\big((\up_{e'}\!\otimes\!\up_{\lr{e'}})_{e'\in\E_\bu(\T)}\big)\big)_e
=\tnd\si(\up_{\si(e)})\!\otimes\!\tnd\si(\up_{\si\lr{e}}).$$
We denote the corresponding fixed locus by~$\ti\fF\T^{\si}$.\\

For each $e\inn\E_\bu(\T)$,
let $\Edg'_{0,\bu;e}\!\subset\!\Edg'_{0;\lr{e}}$
be the subset of edges whose removal separates $e$ and~$\lr{e}$.
This subset is empty if $e\eq\lr{e}$.
Let
$$\Edg_{0,\bu;e}=\Edg'_{0,\bu;e}\cup\{e\}\cup\{\lr{e}\}\subset
 \Edg\qquad \forall~e\inn\E_\bu(\T)\,.$$
Since $\Si_{\u;w}\!\approx\!\P^1$ for every $w\!\in\!e\!$ with $e\!\in\!\Edg'_{0,\bu;e}$,
there is a natural isomorphism
\BE{Fedfn_e}
F_e\!:
\bigotimes_{e'\in\Edg_{0,\bu;e}}\!\!\!\!\!\!\!\!\ti L_{e'} 
  \,\stackrel{\cong}{\lra} \pi^*_{\T;e}L_{e}\!\otimes\!\pi^*_{\cC;\lr{e}}T\cC 
	\,;\EE
see~\cite[(2.3)]{desing}.
Define 
\begin{alignat*}{2}
\rho_\T\!:\ti\F\T&\lra \ti\fF\T, &\quad 
\rho_\T\big(\up\big) \big(\rho_e(\up)\big)_{e\in\E_{\bu}(\T)}
 &= \bigg(\!F_e\Big(\!\bigotimes_{e'\in\Edg_{0,\bu;e}}
 \!\!\!\!\!\!\!\up_{e'}\Big)\!\!\bigg)_{\!\!e\in\E_\bu(\T)}\,,\\
\cD_\T\!:\ti\fF\T&\lra\Obs_\T, &\quad
 \big\{\cD_\T\big((\up_e\!\otimes\!\up_{\lr{e}})_{e\in\E_\bu(\T)}\big)\big\}
 \big((\ka_e)_{e\in\E_\bu(\T)}\big)
 &=
\Big(\!\ka_e(\up_e)\big(\tnd_{x_{\lr{e}}^0(\u)}u_0(\up_{\lr{e}})\!\big)\!\!
\Big)_{\!e\in\E_\bu(\T)}\,,
\end{alignat*}
where $u_0\!=\!u|_{\Si_{\u;0}}$ and $x_{\lr{e}}^0(\u)\!\in\!\Si_{\u;0}$ is the point
forming the node of~$\Si_{\u}$ corresponding to the edge $\lr{e}\!\in\!\E_0(\T)$.
The bundle maps~$\cD_\T$ and~$\rho_\T$ restrict to
$$\cD_\T^{\phi}\!: \ti\fF\T^{\si}\lra \Obs_\T^{\phi}\qquad\tn{and}\qquad
 \rho_\T^{\si}\!: \ti\F\T^{\si}\lra \ti\fF\T^{\si},$$
respectively.
If $\T\!=\!(\ft,\fd)$ with $\ft$ basic, then
$\ti\fF\T^{\si}\!=\!\ti\F\T^{\si}$, $\rho_\T^{\si}\!=\!\id$,
and the restriction of $\cD_\T^{\phi}$ to $\ti\fM_{\T,\f}(\cC)^\phi$
is as below~\e_ref{DT_e}.

\subsection{The unobstructed gluing step}
\label{UnObsGlue_subs}

\noindent
Let $\fU\!\lra\!\De^{\!\si}$ be a family of deformations of the domains of the elements of 
$\ti\M_{\T}(X)^\phi$ for a small neighborhood $\De^{\!\si}\!\subset\!\ti\F\T^{\si}$ of the zero section.
It is the restriction to $\ti\F\T^{\si}\!\subset\!\ti\F\T$ of the analogous family
in the complex setting.
The fiber of $\fU\!\lra\!\De^{\!\si}$ over $\up\!\in\!\De^{\!\si}$ is a nodal symmetric 
surface~$(\Si_{\up},\si_{\up})$.
Its dual graph~$\T_{\up}$ is a rooted decorated graph obtained from~$\T$ 
by deleting the edges corresponding to 
the non-zero components of~$\up$ and identifying the vertices of each deleted edge.
In particular,
$$ \E_\bu(\T_{\up})=\E_\bu(\T) \qquad\forall~\up\in \De^{\!\si}\!\cap\!\ti\F_1\T;$$
we will not distinguish between these two sets below.
It can be assumed that $\up_1\!\in\!\De^{\!\si}$ for all $\up\!\in\!\De^{\!\si}$.\\

\noindent
We fix an involution-invariant Riemannian metric on~$\fU$ and denote its restriction 
to~$\Si_{\up}$ by~$g_{\up}$.
For any $\up\!\in\!\De^{\!\si}$, $e\!\in\!\Edg$ with $\up_e\!=\!0$, and $\de\!\in\!\R$, 
let $\Si_{\up;e}(\de)\!\subset\!\Si_{\up}$ denote
the $g_{\up}$-ball of radius~$\de$ centered at the node $x_e(\up)$ corresponding 
to~$e$.
If in addition $\up\!\in\!\ti\F_1\T$ and $e\!\in\!\E_{\bu}(\T)$, define
$$\Si_{\up;e}^0(\de)=\Si_{\up;e}(\de)\cap\Si_{\up;e}^0\,, \qquad
\Si_{\up;e}^c(\de)=\Si_{\up;e}(\de)\cap\Si_{\up;e}^c\,, \qquad
\wh\Si_{\up;e}^c(\de)=\wh\Si_{\up;e}^c\!\cup\!\Si_{\up;e}(\de)\,.$$
If $e\!\in\!\E_0(\T)$, let $\wh\Si_{\u;e}^0(\de)\!=\!\wh\Si_{\u;e}^0\!\cup\!\Si_{\u;e}(\de)$.
The subset $\De^{\!\si}\!\subset\!\ti\F\T^{\si}$ contains~$\ti\M_{\T}(X)^\phi$
as the zero section.
There are continuous fiber-preserving retractions~$q$ and~$q_{\bu}$ 
respecting the involutions so that the~diagram
$$\xymatrix{ \fU \ar@/^2pc/[rrrr]_q \ar[rr]^>>>>>>>>>>>{q_{\bu}}\ar[d]&&
 \fU|_{\De^{\!\si}\cap\ti\F_1\T^{\si}} \ar[rr]^>>>>>>>>>>>q\ar[d]&&
\fU|_{\ti\M_{\T}(X)^\phi} \ar[d]\\ 
\De^{\!\si} \ar[rr]&&  \De^{\!\si}\!\cap\!\ti\F_1\T^{\si} 
\ar[rr]&&\ti\M_{\T}(X)^\phi}$$
commutes.
We denote~by
\BE{qupdfn_e} q_{\up}\!:\,\Si_{\up}\lra\Si_\u
\quad\hbox{and}\quad
q_{\bu;\up}\!:\,\Si_{\up}\lra\Si_{\up_1},
\qquad\up\in\De^{\!\si}|_{\u},~\u\!\in\!\ti\M_{\T}(X)^\phi\,,\EE
the restrictions of~$q$ and~$q_{\bu}$ to~$\Si_{\up}$.\\

\noindent 
After possibly shrinking~$\De^{\!\si}$, 
the Riemannian metric on~$\fU$ and the map~$q$ can be chosen so~that 
\begin{enumerate}[label=$\bu$,leftmargin=*]

\item $q$ is smooth on each stratum of~$\fU$,

\item $q(z_i^{\pm}(\up))\!=\!z_i^{\pm}\big(\pi_{\ti\F\T^\si}(\up)\big)$ for all 
$\up\in\De^{\!\si}$ and $i\!=\!1,\ldots,l$,

\item for each $\up\in\De^{\!\si}$, $q_{\up}$ is biholomorphic on 
the complement of the subspaces $q_{\up}^{-1}(\Si_{\u;e}(2\sqrt{|\up_e|}))$
with $e\!\in\!\Edg$,

\item there exists a continuous function $\de_q\!:\ti\M_{\T}(X)^\phi\!\lra\!\R^+$
such that every restriction~\e_ref{qupdfn_e} is a $(g_{\up},g_{\u})$-isometry 
on the complement of the subspaces $q_{\up}^{-1}(\Si_{\u;e}(\de_q(\u)))$
with $e\!\in\!\Edg$ and~$\up_e\!\neq\!0$,

\item there exist  a continuous function $\de_{\dbar}\!:\ti\M_{\T}(X)^\phi\!\lra\!\R^+$
and holomorphic functions
$$z_e,z_e^c\!: \bigcup_{\u\in\ti\M_{\T}(X)^\phi}\hspace{-.2in}q^{-1}
\big(\Si_{\u;e}(\de_{\dbar}(\u))\big)\lra\C, \qquad e\inn\E_\bu(\T),$$
such that $z_{e;\u}\!\equiv\!z_e|_{\Si_{\u;e}^0(\de_{\dbar}(\u))}$ and 
$z_{e;\u}^c\!\equiv\!z_e^c|_{\Si_{\u;e}^c(\de_{\dbar}(\u))}$
are unitary coordinates centered at $x_e(\u)$ and
\begin{gather*}
\big(z_ez_e^c\big)|_{q_{\up}^{-1}(\Si_{\u;e}(\de_{\dbar}(\u)))}\, 
 \frac{\prt}{\prt z_{e;\u}}\bigg|_{x_e^0(\u)}\otimes 
\frac{\prt}{\prt z_{e;\u}^c}\bigg|_{x_e^c(\u)} =\up_e\,, \quad
\big|z_e|_{q_{\up}^{-1}(x_e(\u))}\big|=\big|z_e^c|_{q_{\up}^{-1}(x_e(\u))}\big|\,,\\
z_e\big|_{q_{\up}^{-1}(\Si_{\u;e}^0(\de_{\dbar}(\u))-\Si_{\u;e}(2\sqrt{|\up_e|}))}=
z_{e;\u}\!\circ\!q_{\up}\big|_{q_{\up}^{-1}(\Si_{\u;e}^0(\de_{\dbar}(\u))-\Si_{\u;e}(2\sqrt{|\up_e|}))}\,,\\
z_e^c\big|_{q_{\up}^{-1}(\Si_{\u;e}^c(\de_{\dbar}(\u))-\Si_{\u;e}(2\sqrt{|\up_e|}))}=
z_{e;\u}^c\!\circ\!q_{\up}\big|_{q_{\up}^{-1}(\Si_{\u;e}^c(\de_{\dbar}(\u))-\Si_{\u;e}(2\sqrt{|\up_e|}))}  
\end{gather*} 
for all $\up\!\in\!\De^{\!\si}\big|_{\u}$ and $\u\!\in\!\ti\M_{\T}(X)^\phi$.
\end{enumerate}
We can assume that $2\de_q(\u)$ is less than the minimal distance between 
the nodal and marked points
of~$\Si_{\u}$ for every $\u\!\in\!\ti\M_{\T}(X)^\phi$, $\de_{\dbar}\!<\!\de_q\!<\!1$, and
$$16|\up|< \de_{\dbar}\big(\pi_{\ti\F\T}(\up)\big)^2 \qquad\forall~\up\!\in\!\De^{\!\si}\,.$$
The functions~$\de_q$, $\de_{\dbar}$, $z_e$, and~$z_e^c$ can be chosen
compatibly with the $\ti\Aut^*(\T)$-actions and with the involution on~$\fU$.\\

\noindent
With~$e\!\in\!\E_{\bu}(\T)$ and $F_e$ as in~\e_ref{Fedfn_e}, define
\begin{gather*}
\rho_{e;\u}\!:\C\lra T_{x_{\lr{e}}(\u)}\Si_{\u;0} \qquad\hbox{by}\\
v_e\otimes \rho_{e;\u}(c)=
F_e\bigg(v_e\!\otimes\!c\frac{\prt}{\prt z_{e;\u}}\bigg|_{x_e^0(\u)}\otimes
\!\bigotimes_{\begin{subarray}{c}e'\in\Edg_{0,\bu;e}\\ e'\neq e\end{subarray}}
 \!\!\!\!\!\!\!\up_{e'}\bigg)
\quad\forall~c\!\in\!\C,~v_e\!\in\!\pi^*_{\T;e}L_{e}\big|_{\u}\,.
\end{gather*}
By the $\C$-linearity of~$F_e$,  $\rho_{e;\u}$ is a well-defined $\C$-linear isomorphism.\\

Let $g_X$ be a $\phi$-invariant metric on $X$,
$\na^X$ be its  Levi-Civita connection, and $\exp$ be the exponential map of~$\na^X$.
Fix a number $p\!>\!2$.
For each $\up\!\in\!\De^{\!\si}$ and a smooth real map $f\!:\Si_{\up}\!\lra\!X$, let 
\BE{Dphif_e}D^{\phi}_f\!: \Ga(f)^\phi\!\equiv\!\Ga\big(\Si_{\up};f^*TX\big)^{\phi,\si_{\up}}
\lra \Ga^{0,1}(f)^\phi\equiv
 \Ga^{0,1}\big(\Si_{\up};T^*\Si_{\up}^{0,1}\!\otimes\!f^*TX\big)^{\phi,\si_{\up}}\EE
denote the linearization of the $\dbar_J$-operator at~$f$
defined via the connection~$\na^X$ as in \cite[Section~3.1]{McS}.
The construction of \cite[Section~3]{LiT} provides modified Sobolev norms 
$\|\cdot\|_{\up,p,1}$ and $\|\cdot\|_{\up,p}$~on
the domain and target of the homomorphism~\e_ref{Dphif_e}
as well as $L^2$-inner products $\llrr{\cdot,\cdot}_{\up,2}$ on these spaces.
These homomorphisms  satisfy elliptic estimates involving 
the $\|\cdot\|_{\up,p,1}$ and $\|\cdot\|_{\up,p}$ norms
with coefficients
that depend only on $\pi_{\ti\F\T}(\up)\!\in\!\ti\M_{\T}(X)^\phi$
and $\|\tnd f\|_{\up,p}\!\in\!\R$.\\

For $\up\!\in\!\De^{\!\si}\!\cap\!\ti\F_1\T|_{\u}$, 
$\Si_{\up}$ is obtained from~$\Si_{\u}$ by smoothing the nodes corresponding
to the subsets $\Edg^0$ and~$\Edg^c$ of the edges of~$\T$.
Since the restrictions of~$u$ to the irreducible components of~$\Si_{\u}$ sharing the nodes
indexed by~$\Edg^c$ are constant, the smoothings of these nodes of~$\Si_{\u}$ extend to
the deformations of the map~$\u$ into~$X$.
The nodes indexed by~$\Edg^0$ are the nodes of the center~$\Si_{\u}^0$ of~$\Si_{\u}$,
which consists of the non-contracted smooth genus~$h$ curve~$\Si_{\u;0}$
with contracted rational tails~$\wh\Si_{\u;e}^0$ attached.
If $\u\!\in\!\ti\fM_{\T,\f}(\cC)^\phi$, then $u|_{\Si_{\u}^0}$ is a regular $J$-holomorphic map.
Therefore, there exists a neighborhood 
$$\ti{U}_{\T,\f}^{\phi}(\cC)\subset\ti\M_{\T}(X)^\phi$$
of $\ti\fM_{\T,\f}(\cC)^\phi$ such~that the family of the deformations
$$\fU\big|_{\De_{\cC}^{\!\si}\cap\ti\F_1\T}\lra\De_{\cC}^{\!\si}\!\cap\!\ti\F_1\T\,,
\qquad\hbox{where}\quad  
\De_{\cC}^{\!\si}= \De^{\!\si}\cap
\ti\F\T^{\si}\big|_{\ti{U}_{\T,\f}^{\phi}(\cC)}\,,$$
of the domains of the elements of $\ti{U}_{\T,\f}^{\phi}(\cC)$ extends to 
a continuous  $\ti\Aut^*(\T)$-invariant family 
\BE{ti1up_e}\ti{u}_{\up}\!:\Si_{\up}\lra X, \qquad 
\up\in \De_{\cC}^{\si}\!\cap\!\ti\F_1\T\,,\EE
of $J$-holomorphic maps.
This family can be chosen so that it is smooth on each stratum of 
$\De_{\cC}^{\!\si}\!\cap\!\ti\F_1\T$.
The smoothings of the maps~$\u$ with a fixed image curve arise algebraically.\\

By shrinking~$\de_{\dbar}$, we can assume that 
the diameter of $u(\Si_{\u;e}^0(\de_{\dbar}(\u)))$ is at most half the injectivity
radius of~$g_X$ for every $\u\!\in\!\ti\M_{\T}(X)^\phi$ and $e\!\in\!\E_0(\T)$.
Since $\ti{u}_{\up}\!\lra\!u$ as $\up\!\lra\!\u$, 
we can also assume  that the diameter of 
$$u\big(\Si_{\u;\lr{e}}^0\big(\de_{\dbar}(\u))\big)\cup
\ti{u}_{\up}\big(q_{\up}^{-1}\big(\wh\Si_{\u;\lr{e}}^0(\de_{\dbar}(\u))\big)\big)\subset X$$ 
is less than the injectivity radius for all $\up\!\in\!\De_{\cC}^{\!\si}\!\cap\!\ti\F_1\T|_{\u}$
and $e\!\in\!\E_\bu(\T)$.
For $\up\!\in\!\De_{\cC}^{\!\si}\!\cap\!\ti\F_1\T|_{\u}$ and $e\!\in\!\E_\bu(\T)$,
we can thus define a smooth function 
\BE{tize}
\ze_{\up;e}\!: q_{\up}^{-1}\big(\wh\Si_{\u;\lr{e}}^0(\de_{\dbar}(\u))\big)
\!\cup\!\wh\Si_{\up;e}^c \lra T_{u(x_{\lr{e}}(\u))}X \quad\hbox{s.t.}~~~
\exp_{u(x_{\lr{e}}(\u))}\!\ze_{\up;e}=\ti u_{\up}\EE
by requiring that $\ze_{\up;e}|_{\wh\Si_{\up;e}^c}=0$.\\

For each \hbox{$\up\!\in\!\De_{\cC}^{\!\si}|_{\u}\!\cap\!\ti\F_1\T$}, let
$$u_{\up}=u\!\circ\!q_{\up}\!:\Si_{\up}\lra X.$$
Since $\ti{u}_{\up}\!\lra\!u$ as $\up\!\lra\!\u$, 
\BE{zeup1_e} \ti u_{\up}\equiv\exp_{u_{\up}}\!\!\ze_{\up} \EE
for some small vector field $\ze_{\up}\inn\Ga(u_{\up})^\phi$,
provided $\De^{\!\si}$ and $\ti{U}_{\T,\f}^{\phi}(\cC)$
are sufficiently small. 
Since the map $q_{\up}$ is holomorphic on $\Si_{\up;e}(\de_{\dbar}(\u))$ 
for each $e\!\in\!\E_{\bu}(\T)$,
$$z_{e;\up}\equiv z_e\big|_{\Si_{\up;e}^0(\de_{\dbar}(\u))}=
z_{\u;e}\!\circ\!q_{\up}\big|_{\Si_{\up;e}^0(\de_{\dbar}(\u))}\!: 
\Si_{\up;e}^0\big(\de_{\dbar}(\u)\big)\lra \C$$
is a holomorphic coordinate 
centered at $x^0_e(\up)\!=\!q_{\up}^{-1}(x^0_e(\u))$.

\begin{lmm}\label{LMExpansion}
The families of $\fU\!\lra\!\De^\si$ and~\e_ref{ti1up_e}
can be parametrized so that the following holds. 
There exist smooth functions 
\begin{gather*}
C\!\!:\ti{U}_{\T,\f}^{\phi}(\cC)\lra\R^+ \qquad\hbox{and}\\
\fR_{\up;e}\!:  \Si_{\up;e}\big(\de_{\dbar}(\u)\big)  \lra T_{u(x_{\lr{e}}(\u))}X
~~\hbox{with}~\up\!\in\!\De_{\cC}^{\!\si}\!\cap\!\ti\F_1\T|_{\u},\,
\u\!\in\!\ti{U}_{\T,\f}^{\phi}(\cC),\,e\!\in\!\E_\bu(\T)
\end{gather*}
respecting the $\ti\Aut^*(\T)$-actions such~that 
\begin{gather}
\label{ZeExpan_e}
\ze_{\up;e}\big(z_{e;\up}\big)=\ze_{\up;e}(0)
+\tnd_{x_{\lr{e}}^0(\u)}u_0\big(\rho_{e;\u}(z_{e;\up})\big)
 +\fR_{\up;e}\big(z_{e;\up}\big)\,,\\
\notag
\|\ze_{\up}\|_{\up,p,1}\le C(\u)|\up|^{1/p},~~
 \big|\fR_{\up;e}(z_{e;\up})\big|\le C(\u)
\big|\rho_{e;\u}\big(z_{e;\up}\big)\big||z_{e;\up}|,~~
 \big|\tnd_{z_{e;\up}}\!\fR_{\up;e}\big|\le C(\u)
 \big|\rho_{e;\u}\big(z_{e;\up}\big)\big|
 \end{gather}
for all $z_{e;\up}\!\in\!\Si_{\up;e}(\de_{\dbar}(\u))$, $\up\!\in\!\De_{\cC}^{\!\si}\!\cap\!\ti\F_1\T|_{\u}$, 
$\u\!\in\!\ti{U}_{\T,\f}^{\phi}(\cC)$, and $e\!\in\!\E_\bu(\T)$.
\end{lmm}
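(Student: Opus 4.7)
The lemma packages a second-order Taylor expansion of the glued family $\ti u_\up$ in the holomorphic neck coordinate $z_{e;\up}$, with the linear coefficient identified through the isomorphism $F_e$ of~\e_ref{Fedfn_e}. My strategy is: first fix parametrizations of $\fU\!\to\!\De^{\!\si}$ and of~\e_ref{ti1up_e} that are compatible with the coordinates $z_e,z_e^c$; then expand $\ze_{\up;e}$ to first order, absorbing the higher-order tail into $\fR_{\up;e}$; and finally derive the pointwise bounds from Taylor's theorem with integral remainder and the global Sobolev bound from the standard gluing estimates of~\cite{LiT}.

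Since $\ti u_\up$ is $J$-holomorphic and $z_{e;\up}$ is a holomorphic coordinate on the disk $\Si_{\up;e}(\de_{\dbar}(\u))$, the lifted function $\ze_{\up;e}=\exp^{-1}_{u(x_{\lr{e}}(\u))}\!\circ\,\ti u_\up$ is well-defined and smooth, since $\ti u_\up(\Si_{\up;e}(\de_{\dbar}(\u)))$ lies within the injectivity radius of $u(x_{\lr{e}}(\u))$ by the paragraph following~\e_ref{ti1up_e}. Expanding this function to first order gives $\ze_{\up;e}(z_{e;\up})=\ze_{\up;e}(0)+(\tnd_0\ze_{\up;e})(z_{e;\up})+\fR_{\up;e}(z_{e;\up})$, with $|\fR_{\up;e}(z_{e;\up})|$ bounded by $C(\u)\sup|\tnd^2\ze_{\up;e}|\,|z_{e;\up}|^2$ and $|\tnd\fR_{\up;e}|$ by $C(\u)\sup|\tnd^2\ze_{\up;e}|\,|z_{e;\up}|$, both via the integral form of Taylor's theorem and uniform $C^2$-bounds on the family $\ti u_\up$ as $\up$ ranges over a precompact subset.

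The central point is the identification $\tnd_0\ze_{\up;e}=\tnd_{x_{\lr{e}}^0(\u)}u_0\circ\rho_{e;\u}$, which is forced by the construction of $\rho_{e;\u}$ together with the compatibility $(z_e z_e^c)\tfrac{\prt}{\prt z_{e;\u}}\!\otimes\!\tfrac{\prt}{\prt z_{e;\u}^c}=\up_e$ built into the coordinates, and analogous compatibilities at the interior nodes of the rational chain $\Edg_{0,\bu;e}$ separating $e$ from $\lr{e}$. Propagating these along the chain, the tensor of the $z_{e;\up}$-derivative with the coordinate derivatives at the other interior nodes precisely matches the tensor $\bigotimes_{e'\in\Edg_{0,\bu;e}}\up_{e'}$ to which $F_e$ is applied in the definition of $\rho_{e;\u}$. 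Since the linear term thereby has magnitude $|\rho_{e;\u}(z_{e;\up})|$ and the corresponding second derivative of $\ze_{\up;e}$ is of the same order modulo a factor of $\de_{\dbar}(\u)^{-1}$, the remainder obeys the asymmetric bound $|\fR_{\up;e}(z_{e;\up})|\le C(\u)|\rho_{e;\u}(z_{e;\up})|\,|z_{e;\up}|$ stated in the lemma, and similarly for $|\tnd_{z_{e;\up}}\fR_{\up;e}|$.

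The global estimate $\|\ze_\up\|_{\up,p,1}\le C(\u)|\up|^{1/p}$ is a standard output of the gluing construction: on each neck, of area $O(|\up_e|)$, the pointwise-bounded $\ze_\up$ contributes a factor of $|\up|^{1/p}$ to the weighted $L^p$-norm of~\cite[Section~3]{LiT}, while away from the necks $\ze_\up$ depends smoothly on $\up$ and vanishes at $\up=0$. The main obstacle I anticipate is arranging the derivative identification exactly rather than only up to higher-order corrections, since any discrepancy would be absorbed into $\fR_{\up;e}$ and could violate its asymmetric bound; I handle this by using the remaining gauge freedom in the family~\e_ref{ti1up_e} to reparametrize each neck so that the first-order term matches $\rho_{e;\u}$ on the nose. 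The $\ti\Aut^*(\T)$-equivariance of $C$ and of $\fR_{\up;e}$ is automatic since the choices of coordinates and smoothings can be made equivariantly.
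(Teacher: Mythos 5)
Your framework (first-order Taylor expansion of $\ze_{\up;e}$ in the neck coordinate, with the linear term identified through $F_e$) matches the shape of the statement, but the argument has a genuine gap at exactly the step that carries the analytic content. The problem is your justification of the identification $\tnd_0\ze_{\up;e}=\tnd_{x_{\lr{e}}^0(\u)}u_0\circ\rho_{e;\u}$ and of the bound $|\fR_{\up;e}(z_{e;\up})|\le C(\u)|\rho_{e;\u}(z_{e;\up})||z_{e;\up}|$. When the chain $\Edg'_{0,\bu;e}$ is nonempty (the non-basic case, which is precisely where the lemma is needed for \e_ref{dbarbnd_e} and \e_ref{Term1_e}), the point $x_e^0(\u)$ lies on a contracted rational tree, so the pregluing $u_{\up}=u\circ q_{\up}$ is \emph{constant} on the disk $\Si_{\up;e}(\de_{\dbar}(\u))$; the entire linear term of $\ze_{\up;e}$ therefore comes from the correction $\ze_{\up}$ of \e_ref{zeup1_e}, i.e.\ from the solution of the CR equation on the glued domain, and is not ``forced by the compatibility built into the coordinates.'' Moreover, uniform $C^2$-bounds on the family $\ti u_{\up}$ over a precompact set -- which is all your Taylor-with-integral-remainder argument uses -- give only $|\fR_{\up;e}(z_{e;\up})|\le C|z_{e;\up}|^2$ with a constant independent of $\up$. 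The lemma requires the far stronger $C|\rho_{e;\u}|\,|z_{e;\up}|^2$ with $|\rho_{e;\u}|\sim\prod_{e'\ne e}|\up_{e'}|\to 0$; without that factor the error in \e_ref{Term1_e} would be of order $|\up|$ rather than $|\rho_\T^\si(\up)|$ and the vanishing argument for the non-basic strata collapses. Even a Schwarz-lemma improvement using $\|\ze_{\up}\|_{C^0}\le C|\up|^{1/p}$ only yields a derivative bound of order $|\up|^{1/p}$, not $\prod|\up_{e'}|$. Your proposed fix by ``gauge freedom'' does not close this: reparametrizing the neck changes $z_{e;\up}$ and hence $\rho_{e;\u}$ itself, and in any case cannot improve the order of the second derivative.

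The paper's proof is a reduction rather than a from-scratch expansion: since $\ti u_{\up}$ is constant on the bubbles, everything happens on $\Si_{\up}^0$, where $q_{\up}$ is the basic gluing map of \cite[Section~2.2]{gluing}; the family \e_ref{ti1up_e} with the bound $\|\ze_{\up}\|_{\up,p,1}\le C(\u)|\up|^{1/p}$ is supplied by \cite[Lemma~3.2]{g1comp} using the regularity of $u|_{\Si_{\u}^0}$; the function $\fR_{\up;e}$ is \emph{defined} by \e_ref{ZeExpan_e}; and the two pointwise bounds are imported from the proofs of \cite[Lemmas~3.4,3.5]{g1comp}, which carry out the successive neck-by-neck estimates producing the factor $|\rho_{e;\u}|$. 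The only new point verified is that the $0$-th order term $\ze_{\up;e}(0)$, absent in \cite{g1comp} because of the normalization imposed there, is itself $O(|\up|^{1/p})$ by the $C^0$-estimate of \cite[Lemma~3.5]{gluing}. If you want a self-contained proof, you would need to reproduce those neck estimates; Taylor's theorem alone does not suffice.
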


\begin{proof}
Since $\ti{u}_{\up}|_{\wh\Si_{\up}^c}$ is constant, this lemma concerns only the smoothings of
the nodes of~$\Si_{\u}^0$ and the restrictions of~$q_{\up}$ and~$\ti{u}_{\up}$
to $\Si_{\up}^0\!\subset\!\Si_{\up}$.
The restriction $q_{\up}|_{\Si_{\up}^0}$ can be written as the basic gluing map 
of \cite[Section~2.2]{gluing}. 
Since $u|_{\Si_{\u}^0}$ is a regular map, \cite[Lemma~3.2]{g1comp}
provides a family of maps~\e_ref{ti1up_e} 
satisfying the first bound in Lemma~\ref{LMExpansion}.\\

\noindent
The smooth function $\fR_{\up;e}$ is defined by~\e_ref{ZeExpan_e}.  
The two bounds on~$\fR_{\up;e}$ follow directly from the proofs of \cite[Lemmas 3.4,3.5]{g1comp}.
The only difference is the presence of the 0-th order term $\ze_{\up;e}(0)$ in the current setting;
it does not appear in \cite[(3.27)]{g1comp} because of the vanishing condition imposed
on the elements of $\Ga_+(\up)$  in  \cite[(3.2)]{g1comp}.
By~\cite[Lemma 3.5]{gluing} and the first bound of Lemma~\ref{LMExpansion},
$$\big\|\ze_{\up}\big\|_{\up,C^0}\le C_1(\u)|\up|^{1/p}\,.$$
Along with~\e_ref{zeup1_e} and~\e_ref{tize}, this implies~that 
\BE{0thterm}
\big|\ze_{\up;e}(0)\big|\le C_2(\u)|\up|^{1/p}\,.\EE
Thus, all estimates from~\cite[Lemmas 3.4,3.5]{g1comp} apply in the current setting.
\end{proof}

\subsection{The obstructed gluing step}
\label{ObsGlue_subs}

We now move to the second stage of the gluing construction.
For each $\up\inn\De_{\cC}^{\!\si}$ sufficiently small, we will construct 
an approximately $J$-holomorphic map
\BE{dbarbnd_e}  u_{\bu;\up}\!:\Si_{\up}\lra X
\qquad\hbox{s.t.}\quad
\big\|\dbar_Ju_{\bu;\up}\big\|_{\up,p}\le 
C_0\big(\pi_{\ti\F\T}(\up)\big)\big|\rho^\si_\T(\up)\big|\EE
for some continuous function $C_0\!:\ti{U}_{\T,\f}^{\phi}(\cC)\!\lra\!\R^+$.
This map is analogous to 
the approximately $J$-holomorphic map constructed just before \cite[Lemma~3.5]{LiT}.
It can be written explicitly in this case as each map~$\ti{u}_{\up_1}$ 
is non-constant on only one irreducible component~$\Si_{\up_1;0}$ of its domain
and~$\Si_{\up_1;0}$ is smooth.\\

Let $\be\!:\R^+\!\lra\![0,1]$ be a smooth cutoff function such that
$$\be(r)=\begin{cases}
1,&\hbox{if}~r\!\le\!1/2;\\
0,&\hbox{if}~r\!\ge\!1.
\end{cases}$$
For each $\ep\!\in\!\R^+$, we define 
$\be_{\ep}\!\in\!C^{\i}(\R;\R)$ by $\be_{\ep}(r)\!=\!\be(r/\ep)$.
For $\up\!\in\!\De^{\!\si}|_{\u}$ and $e\!\in\!\E_{\bu}(\T)$, define
$$\ti q_{\up;e}\!:
q_{\up}^{-1}\big(\Si_{\u;e}(\de_{\dbar}(\u))\big)\lra 
q_{\up_1}^{-1}\big(\Si_{\u;e}^0(\de_{\dbar}(\u))\big) \quad\hbox{by}\quad
z_{e}\big(\ti q_{\up;e}(x)\big)=\be_{\de_{\dbar}(\u)}(|z_e^c(x)|)z_e(x);$$
in particular, 
$$\ti q_{\up;e}(x)=\begin{cases}
q_{\bu;\up}(x),&\hbox{if}~
x\!\in\!q_{\up}^{-1}(\Si_{\u;e}^0(\de_{\dbar}(\u))\!-\!\Si_{\u;e}(2\sqrt{|\up_e|}));\\
x_e(\up_1),&\hbox{if}~x\!\in\!q_{\up}^{-1}(\prt\Si_{\u;e}^c(\de_{\dbar}(\u))).
\end{cases}$$
If in addition $\u\!\in\!\ti{U}_{\T,\f}^{\phi}(\cC)$, define
$$u_{\bu;\up}\!:\Si_{\up}\lra X, \quad
u_{\bu;\up}(x)=\begin{cases}
\ti{u}_{\up_1}(q_{\bu;\up}(x)),&\hbox{if}~
x\!\in\!q_{\up}^{-1}(\Si_{\u;e}^0),~x\!\not\in\!
q_{\up}^{-1}(\Si_{\u;e}(2\sqrt{|\up_e|}))~\forall\,e\!\in\!\E_{\bu}(\T);\\
\ti{u}_{\up_1}(\ti q_{\up;e}(x)),&\hbox{if}~x\!\in\!q_{\up}^{-1}(\Si_{\u;e}(\de_{\dbar}(\u))),~
e\!\in\!\E_{\bu}(\T);\\
\ti{u}_{\up_1}(x_e(\up_1)),&\hbox{if}~
x\!\in\!q_{\up}^{-1}(\Si_{\u;e}^c\!-\!\Si_{\u;e}(\de_{\dbar}(\u))),~e\!\in\!\E_{\bu}(\T).
\end{cases}$$
The support of $\dbar_Ju_{\bu;\up}$ is contained in the annuli 
$q_{\up}^{-1}(\Si_{\u;e}^c(\de_{\dbar}(\u)))$ with $e\!\in\!\E_{\bu}(\T)$.
On these annuli, the map~$\ti q_{\up;e}$ is equivalent to 
 the modified gluing map~$\ti{q}_{\up_0;2}$  of \cite[Section~4.2]{g1comp}.
The bound in~\e_ref{dbarbnd_e} thus follows from Lemma~\ref{LMExpansion}
as in the proof of the first estimate in \cite[Lemma~4.4(3)]{g1comp}.\\

For each $\up\!\in\!\De^{\!\si}_{\cC}$ and $\xi\!\in\!\ker D_{\ti{u}_{\up_1}}^{\phi}$,
we define $R_{\up}\xi\!\in\!\Ga(u_{\bu;\up})^{\phi}$ as in the above construction
of~$u_{\bu;\up}$.
Let
$$\Ga_-\big(u_{\bu;\up}\big)^\phi=\big\{R_{\up}\xi\!:\,
\xi\!\in\!\ker D_{\ti{u}_{\up_1}}^{\phi}\big\}$$
and denote~by 
$$\Ga_+\big(u_{\bu;\up}\big)^\phi\subset\Ga\big(u_{\bu;\up}\big)^\phi$$
the $L^2$-orthogonal complement of $\Ga_-(u_{\bu;\up})^\phi$.
Analogously to the third estimate in \cite[Lemma~4.4(3)]{g1comp},
\BE{Dphiup_e}
C_1\big(\pi_{\ti\F\T}(\up)\big)^{-1}\|\ze\|_{\up,p,1}
\le \big\|D^{\phi}_{u_{\bu;\up}}\ze\big\|_{\up,p}
\le  C_1\big(\pi_{\ti\F\T}(\up)\big)\|\ze\|_{\up,p,1}
\qquad\forall~\ze\!\in\!\Ga_+\big(u_{\bu;\up}\big)^\phi\EE
for some continuous function $C_1\!:\ti{U}_{\T,\f}^{\phi}(\cC)\!\lra\!\R^+$.
The second estimate in \cite[Lemma~4.4(3)]{g1comp}
also applies in this case for the same reasons as before.

\begin{lmm}\label{LMnbhd}
If $\De^{\!\si}$, $\ti{U}_{\T,\f}^{\phi}(\cC)$, and 
$\de_{\bu}\!:\ti{U}_{\T,\f}^{\phi}(\cC)\!\lra\!\R^+$ are sufficiently small, 
then 
\begin{gather*}
 \Phi_{\T}\!:\big\{(\up,\ze)\!: \,
\up\inn\De_{\cC}^{\!\si},~\ze\inn\Ga_+\big(u_{\bu;\up}\big)^\phi,~
	\|\ze\|_{\up,p,1}\!<\!\de_{\bu}\big(\pi_{\ti\F\T^\si}(\up)\big)\,\big\}\lra 
\X_{g,l}(X,B)^{\phi},\\
(\up,\ze) \lra 
\big[\up(\ze)\big]\equiv\big[\exp_{u_{\bu};\up}\!\ze,\big(z^+_i(\up),z^-_i(\up)\big)_{i=1}^l\big],
\end{gather*}
is an $\ti{\Aut}^*(\T)$-invariant degree $|\ti{\Aut}^*(\T)|$ covering of 
an open neighborhood~$\X_{\T;\f}(\cC)$ of $\fM_{\T,\f}(\cC)^\phi$.
\end{lmm}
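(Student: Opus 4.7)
The plan is to verify three ingredients: $\Phi_\T$ is continuous and $\ti\Aut^*(\T)$-invariant; it is injective modulo this action; and its image is open in $\X_{g,l}(X,B)^\phi$ and contains a neighborhood of $\fM_{\T,\f}(\cC)^\phi$. For the first, continuity of $\Phi_\T$ will be read off from the continuity of the family $\fU\!\to\!\De^{\!\si}$, of the map $\up\!\mapsto\!\ti u_{\up_1}$ guaranteed by Lemma~\ref{LMExpansion}, of the pregluing $\up\!\mapsto\!u_{\bu;\up}$ constructed at the start of Section~\ref{ObsGlue_subs}, and of the exponential map, all of which are smooth on each stratum of the parameter space. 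The $\ti\Aut^*(\T)$-invariance will follow because every object entering the construction---the family~$\fU$, the maps $\ti u_{\up_1}$, the pregluing $u_{\bu;\up}$, the splittings $\Ga_\pm(u_{\bu;\up})^\phi$, and the marked points $z_i^\pm(\up)$---has been chosen $\ti\Aut^*(\T)$-equivariantly.

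Next, I plan to establish the covering property. Suppose that $\Phi_\T(\up,\ze)\eq\Phi_\T(\up',\ze')$. Any isomorphism between the underlying marked symmetric surfaces $(\Si_\up,\si_\up)$ and $(\Si_{\up'},\si_{\up'})$ preserves the collection of smoothed nodes and therefore arises from an element $g\!\in\!\ti\Aut^*(\T)$ with $g\!\cdot\!\up\eq\up'$; after applying~$g$ I may assume $\up\eq\up'$. Then $u_{\bu;\up}\eq u_{\bu;\up'}$, and $\exp_{u_{\bu;\up}}\!\ze\eq\exp_{u_{\bu;\up}}\!\ze'$ will force $\ze\eq\ze'$ provided $\de_\bu$ is small enough that both vector fields lie below the injectivity radius of~$g_X$. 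This will give the degree $|\ti\Aut^*(\T)|$ covering on the domain of~$\Phi_\T$.

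The main obstacle will be surjectivity of $\Phi_\T$ onto a full neighborhood of $\fM_{\T,\f}(\cC)^\phi$. Given a stable map $[u'']\!\in\!\X_{g,l}(X,B)^\phi$ close to some $[u']\!\in\!\fM_{\T,\f}(\cC)^\phi$, my plan is to first read off a provisional gluing parameter $\up\!\in\!\De_{\cC}^{\!\si}$ from the sizes of the smoothed nodes and the conformal data on the smooth complement of the domain, so that after reparametrization the underlying map takes the form $\exp_{u_{\bu;\up}}\!\ze$ for some small $\ze\!\in\!\Ga(u_{\bu;\up})^\phi$. I then plan to absorb the component of~$\ze$ in $\Ga_-(u_{\bu;\up})^\phi$ by updating the $\ti\F_1\T$-part of~$\up$: since $u|_{\Si_{\u}^0}$ is a regular real $J$-holomorphic map, the family~\e_ref{ti1up_e} locally realizes $\ker D^\phi_{\ti u_{\up_1}}$ as the tangent space of its image, so such a redefinition is available in the unobstructed directions. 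The resulting Newton iteration will converge by the uniform estimate~\e_ref{Dphiup_e} on $D^\phi_{u_{\bu;\up}}|_{\Ga_+}$ combined with the pregluing error bound~\e_ref{dbarbnd_e}, along the lines of \cite[Section~5]{LiT} and \cite[Theorem~3.11]{g1comp}; shrinking $\De^{\!\si}$, $\ti{U}_{\T,\f}^{\phi}(\cC)$, and~$\de_\bu$ as needed will secure all smallness hypotheses and yield the open neighborhood $\X_{\T;\f}(\cC)$.
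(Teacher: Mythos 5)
Your outline matches the paper's own proof, which is itself essentially a citation: by the choice of $\Ga_-(u_{\bu;\up})^\phi$ the lemma is "essentially an inverse function theorem," with continuity, injectivity, and surjectivity of the induced map on the quotient by $\ti{\Aut}^*(\T)$ deferred to Sections~4.1, 4.2, and 4.3--4.5 of \cite{gluing} and the paragraph after Lemma~4.4 in \cite{g1comp} --- the same three steps, same splitting $\Ga_{\pm}(u_{\bu;\up})^{\phi}$, and same estimates \e_ref{dbarbnd_e} and \e_ref{Dphiup_e} that you invoke. The one place your sketch is looser than those references is the covering step: when all nodes are smoothed, $\Si_{\up}$ is a smooth surface with no intrinsic ``collection of smoothed nodes,'' so deducing $\up'\!=\!g\!\cdot\!\up$ from an isomorphism of stable maps requires using the $C^0$-proximity of both maps to~$u$ to recover the neck decomposition, together with the slice condition $\ze,\ze'\!\in\!\Ga_+$, which is precisely the content of \cite[Section~4.2]{gluing}.
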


\begin{proof}
By the choice of $\Ga_-(u_{\bu;\up})^\phi$, this is essentially an inverse function theorem.
The continuity, injectivity, and surjectivity of the induced map on the quotient by  $\ti{\Aut}^*(\T)$
are proved 
by arguments similar to Sections~4.1, 4.2, and 4.3-4.5 in~\cite{gluing}, 
respectively; see also the paragraph following Lemma~4.4
in~\cite{g1comp}.
\end{proof}

\noindent
Let $\na^J$ be the $J$-linear connection corresponding to~$\na^X$. 
For each $\up\inn\De^{\!\si}_\cC$ and $\ze\inn\Ga(u_{\bu;\up})^\phi$, let
$$\Pi_\ze\!: \Ga^{0,1}\big(u_{\bu;\up}\big)^\phi\lra
 \Ga^{0,1}_{\fJ_{\up}}\big(\exp_{u_{\bu;\up}}\!\ze\big)^{\phi,\si_{\up}}$$
be the isomorphism induced by the $\na^J$-parallel transport along the $\na^X$-geodesics
$$s\lra \exp_{u_{\bu;\up}}\!\!\big(s\ze\big),\qquad s\inn[0,1].$$
With $\nu$ as in Proposition~\ref{PrpSection}, define
\BE{Phidfn_e}
\Xi_{\T;t\nu}(\up,\cdot)\!: \Ga\big(u_{\bu;\up}\big)^\phi\lra\Ga^{0,1}\big(u_{\bu;\up}\big)^\phi
\quad\hbox{by}\quad
 \Xi_{\T;t\nu}(\up,\ze)=\Pi^{-1}_{\ze}\circ
\big(\{\Bar\prt_J\!+\!t\nu\}\exp_{u_{\bu;\up}}\!\ze\big)\,.\EE
Similarly to \cite[(3.9),(3.10)]{gluing},
\BE{Quad_e}
\Xi_{\T;t\nu}(\up,\ze)=\dbar_Ju_{\bu;\up}
+t\nu_{u_{\bu;\up}}+D^{\phi}_{u_{\bu;\up}}\ze+N^\phi_{\up;t}(\ze),\EE
with the quadratic term $N_{\up;t}$ satisfying
\begin{gather}\label{N_e}
N^\phi_{\up;t}(0)=0, \quad
\big\|N^\phi_{\up;t}(\ze)-N^\phi_{\up;t}(\ze')\big\|_{\up,p} 
\le C_2\big(\pi_{\ti\F\T}(\up)\big)\big(|t|\!+\!\|\ze\|_{\up,p,1}\!+\!\|\ze'\|_{\up,p,1}\big)
\big\|\ze\!-\!\ze'\big\|_{\up,p,1}\\
\forall\quad t\!\in\R,~\ze,\ze'\in\!\Ga\big(u_{\bu;\up}\big)^{\phi} ~~\st~~ 
\|\ze\|_{\up,p,1},\|\ze'\|_{\up,p,1}\le\de_{\bu}\big(\pi_{\ti\F\T}(\up)\big)\notag
\end{gather}
for some continuous function $C_2\!:\ti{U}_{\T,\f}^{\phi}(\cC)\!\lra\!\R^+$.\\

We will show that $\Xi_{\T;t\nu}$ does not vanish if $\nu$ is generic, 
$\T\!=\!(\ft,\fd)$ with $\ft$ not basic, 
$[\up(\ze)]$ is sufficiently close to~$\fM_{\T,\f}(\cC)^\phi$,
and $t\!\in\!\R^+$ is sufficiently small. 
In the case $\ft$ is basic, 
we will construct a section~$\Psi_{\T;t\nu}$ as in~\e_ref{PsiDfn_e} so that 
its vanishing locus corresponds to the vanishing locus of~$\Xi_{\T;t\nu}$,
after the constraints~$\f$ are taken into~account.\\

For any functions $\de_{\T},\ep_{\T}\!: \ti{U}_{\T,\f}^{\phi}(\cC)\!\lra\!\R^+$, let 
$$\Om_{\de_{\T},\ep_{\T}}=
\big\{(\up,\ze)\!: \,\up\inn\De_{\cC}^{\!\si},~\ze\inn\Ga_+\big(u_{\bu;\up}\big)^\phi,~
	|\up|\!<\!\de_{\T}\big(\pi_{\ti\F\T^\si}(\up)\big),~
	 \|\ze\|_{\up,p,1}\!<\!\ep_{\T}\big(\pi_{\ti\F\T^\si}(\up)\big)\big\}.$$
By~\e_ref{Quad_e}, \e_ref{dbarbnd_e}, \e_ref{Dphiup_e}, and~\e_ref{N_e}, 
there exist continuous functions 
$$\de_{\T},\ep_{\T},C_{\T}\!: \ti{U}_{\T,\f}^{\phi}(\cC)\lra \R^+$$ 
such~that 
\BE{ZeBound}\begin{split}
&\big\{(t;\up,\ze)\!\in\!\R\!\times\!\Om_{\de_{\T},\ep_{\T}}\!:
	|t|\!<\!\de_{\T}\big(\pi_{\ti\F\T^\si}(\up)\big),~
	\Xi_{\T;t\nu}(\up,\ze)\!=\!0\big\}\\
&\hspace{.5in}\subset 
\big\{(t;\up,\ze)\!\in\!\R\!\times\!\Om_{\de_{\T},\ep_\T}\!:\,
	\|\ze\|_{\up,p,1}\!\le\!C_{\T}\big(\pi_{\ti\F\T^\si}(\up)\big)
	\big(|\rho_{\T}^{\si}(\up)|\!+\!|t|\big)
	\big\}.
\end{split}\EE
We can assume that 
$$\big\{\up\!\in\!\ti\F\T^{\phi}\big|_{\ti{U}_{\T,\f}^{\phi}(\cC)}\!:
|\up|\!<\!\de_{\T}\big(\pi_{\ti\F\T^\si}(\up)\big)\big\}\subset \De_{\cC}^{\!\si},\quad
\ep_{\T}\!<\!\de_{\bu}, \quad \de_{\T},\ep_{\T}<1, \quad 
2C_{\T}\de_{\T}<\ep_{\T}\,,$$ 
and that the functions $\de_{\T},\ep_{\T},C_{\T}$ are $\ti{\Aut}^*(\T)$-invariant. \\

\noindent
Let $e\inn\E_\bu(\T)$, $\u\!\in\!\ti{U}_{\T,\f}^{\phi}(\cC)$,
and $\up\!\in\!\De^{\!\si}_\cC|_{\u}$. 
We define a holomorphic map
\begin{gather*}
q_{\up;e}\!:\, \wh\Si_{\up;e}^c\big(\de_{\dbar}(\u)\big)\!\equiv\! 
q_{\up}^{\,-1}\big(\wh\Si_{\u;e}^c(\de_{\dbar}(\u))\big)  
\lra\wh\Si_{\up_1;e}^c\subset\Si_{\up_1} \qquad\hbox{by}\\
z_e^c\!\circ\!q_{\up;e}\big|_{q_{\up}^{-1}(\Si_{\u;e}(\de_{\dbar}(\u)))}
=z_e^c\big|_{q_{\up}^{-1}(\Si_{\u;e}(\de_{\dbar}(\u)))}\,, \quad
q_{\up;e}\big|_{\wh\Si_{\up;e}^c-q_{\up}^{-1}(\Si_{\u;e}(2\sqrt{|v_e|}))}
=q_{\bu;\up}\big|_{\wh\Si_{\up;e}^c-q_{\up}^{-1}(\Si_{\u;e}(2\sqrt{|v_e|}))}\,.
\end{gather*}
For each holomorphic $(1,0)$-differential $\ka\!\in\!\bE_e|_{\Si_{\up_1}}$ 
supported on $\wh\Si_{\up_1;e}^c$, 
we define a $(1,0)$-differential $R_{\bu;\up}\ka$ on~$\Si_{\up}$~by
$$R_{\bu;\up}\ka\big|_x
=\begin{cases}
\ka\!\circ\!\tnd_xq_{\up;e},&\hbox{if}~
x\!\in\!q_{\up}^{-1}(\wh\Si_{\u;e}^c\!-\!\Si_{\u;e}(\de_{\dbar}(\u)));\\
\be_{\de_{\dbar}(\u)}(|z_e(x)|)\ka\!\circ\!\tnd_xq_{\up;e},
&\hbox{if}~x\!\in\!q_{\up}^{-1}(\Si_{\u;e}(\de_{\dbar}(\u)));\\
0,&\hbox{if}~x\!\in\!\Si_{\up}\!-\!q_{\up}^{-1}(\wh\Si_{\u;e}^c(\de_{\dbar}(\u))).
\end{cases}$$
Combining this construction with a continuous collection of isomorphisms
$$R_{\up_1;e}\!: \bE_e|_{\Si_{\u}}\lra\bE_e|_{\Si_{\up_1}}$$
as above \cite[(4.9)]{g1comp}, for each $\ka\!\in\!\bE_e|_{\u}$
we obtain a $(1,0)$-differential $R_{\bu;\up}\ka$ on~$\Si_{\up}$.
Analogously to~\cite[(4.10)]{g1comp}, for each $q\inn[1,2)$
there exists a continuous function $C_q\!:\ti{U}_{\T,\f}^{\phi}(\cC)\!\lra\!\R^+$ 
such~that 
\BE{Rupka_e}
\big\|R_\up\ka\big\|_{\up,q}\le 
C_q\big(\u\big)\|\ka\|\qquad\quad
 \forall~\up\inn\De^{\!\si}_\cC|_{\u},~\ka\!\in\!\bE_e|_{\u},~e\!\in\!\E_{\bu}(\T)\,.\EE
The norm on the left-hand side of~\e_ref{Rupka_e} is the usual Sobolev norm with respect
to the metric~$g_{\up}$;
the norm on the right-hand side of~\e_ref{Rupka_e} is any fixed norm on the finite-rank vector 
bundle $\bE_e\!\lra\!\ti{U}_{\T,\f}^{\phi}(\cC)$.\\

With $e$, $\u$, and $\up\!\in\!\De^{\!\si}_\cC|_{\u}$ as above
and $\ze_{\up_1;e}$ as in~\e_ref{tize}, let
$$\ti\ze_{\up;e}\!=\!\ze_{\up_1;e}\!\circ\!\ti q_{\up;e}(x)\!:
\wh\Si_{\up;e}^c\big(\de_{\dbar}(\u)\big)\lra T_{u(x_{\lr{e}}(\u))}X.$$
Define 
\begin{gather*}
\Th_{\up}\!: \Ga^{0,1}\big(u_{\bu;\up}\big)^{\phi}\lra\Obs_{\T}^{\phi}\big|_{\u}
\qquad\hbox{by}\\
\big\{\Th_{\up}(\eta)\big\}\big((\ka_e)_{e\in\E_\bu(\T)}\big)
=\bigg(\frac{\fI}{2\pi}\!\int_{\wh\Si_{\up;e}^c\de_{\dbar}(\u)}\!\!\!\!
\big(\R_{\up}\ka_e\big)\!\!\wedge\!\!\big(\Pi_{\ti\ze_{\up;e}}^{-1}\eta\big)\!\!
\bigg)_{\!\!e\in\E_\bu(\T)}
\quad\forall~\ka_e\!\in\!\bE_e|_{\u},~e\!\in\!\E_\bu(\T)\,.
\end{gather*}
By H\"older's inequality and~\e_ref{Rupka_e}, the above integral is finite and
\BE{Thbd_e} \big\|\Th_{\up}(\eta)\big\|\le C_{\Th}(\u)\|\eta\|_{\up,p}
\qquad\forall~\eta\!\in\!\Ga^{0,1}\big(u_{\bu;\up}\big)^{\phi}\EE
for some continuous function $C_{\Th}\!:\ti{U}_{\T,\f}^{\phi}(\cC)\!\lra\!\R^+$.\\

It is immediate~that 
\BE{nubnd_e} 
\big\|\Th_{\up}\big(\nu_{u_{\bu;\up}}\big)-\ov\nu_{\T}\big(\pi_{\ti\F\T}(\up)\big)\big\|
\le\ep_\nu(\up)\EE
for some continuous function $\ep_{\nu}\!:\De^{\!\si}_\cC\!\lra\!\R^+$ that vanishes along
$\ti{U}_{\T,\f}^{\phi}(\cC)$.
By the proof of \cite[Lemma~4.4(7)]{g1comp},
there exists a continuous function $\ep_1\!:\De^{\!\si}_\cC\!\lra\!\R^+$ vanishing along
$\ti{U}_{\T,\f}^{\phi}(\cC)$ such that 
\BE{DFbnd_e}
\big\|\Th_{\up}\big(D^{\phi}_{u_{\bu;\up}}\ze\big)\big\| \le \ep_1(\up)\|\ze\|_{\up,p,1}
\qquad\forall~\ze\!\in\!\Ga(u_{\bu;\up})^{\phi}\,.\EE
By Lemma~\ref{LMExpansion} and the proof of~\cite[Lemma 4.4(6)]{g1comp},
there exists a continuous function $\ep_{\dbar}\!:\De^{\!\si}_\cC\!\lra\!\R^+$ vanishing along
$\ti{U}_{\T,\f}^{\phi}(\cC)$ such~that 
\BE{Term1_e}
\big\|\Th_{\up}\big(\Bar\prt_Ju_{\bu;\up}\big)-\cD_\T^\phi\rho^\si_\T(\up)\big\|
\le \ep_{\dbar}(\up)\big|\rho^\si_\T(\up)\big|\,;\EE
the 0-th order term $\ze_{\up;e}(0)$ appearing in the expansion of 
Lemma~\ref{LMExpansion} has no effect on~\cite[(4.24)]{g1comp}.
By~\e_ref{Quad_e}, \e_ref{N_e},  and \e_ref{Thbd_e}-\e_ref{Term1_e},
there exist continuous functions
$$C_{\Xi}\!:\ti{U}_{\T,\f}^{\phi}(\cC)\lra\R^+ \qquad\hbox{and}\qquad
\ep_{\Xi}\!:\De^{\!\si}_\cC\lra\R^+$$
with $\ep_{\Xi}$  vanishing along $\ti{U}_{\T,\f}^{\phi}(\cC)$ such~that 
\BE{ThEst_e}\begin{split}
\big\|\Th_{\up}\big(\Xi_{\T;t\nu}(\up,\ze)\big)-
\big(\cD_\T^\phi\rho^\si_\T(\up)\!+\!t\ov\nu_{\T}(\pi_{\ti\F\T^{\si}}(\up))\big)
\big\|
&\le
C_{\Xi}\big(\pi_{\ti\F\T}(\up)\big)\big(|t|\!+\!\|\ze\|_{\up,p,1}\big)\big\|\ze\big\|_{\up,p,1}\\
&\qquad+\ep_{\Xi}(t,\up)\big(\big|\rho^\si_\T(\up)\big|\!+\!|t|\!+\!\|\ze\|_{\up,p,1}\big)
\end{split}\EE
for all $t\!\in\!\R$, $\up\!\in\!\De_{\cC}^{\!\si}$, and 
$\ze\!\in\!\Ga(u_{\bu;\up})^{\phi}$ with 
$\|\ze\|_{\up,p,1}\!\le\!\de_{\bu}(\pi_{\ti\F\T}(\up))$.\\

By~\e_ref{ZeBound} and~\e_ref{ThEst_e},
there exists a continuous function 
$$\ep_{\T;\nu}\!:\R\!\times\!\De^{\!\si}_\cC\lra\R^+$$ 
vanishing along $\{0\}\!\times\!\ti{U}_{\T,\f}^{\phi}(\cC)$ such~that 
\BE{VanBd_e}\begin{split}
&\big\|\cD_\T^\phi\rho^\si_\T(\up)\!+\!t\ov\nu_{\T}\big(\pi_{\ti\F\T^{\si}}(\up)\big)\big\|
\le \ep_{\T;\nu}(t,\nu) \big(\big|\rho^\si_\T(\up)\big|\!+\!|t|\big)\\
&\hspace{1in}
\forall~
(t;\up,\ze)\!\in\!\R\!\times\!\Om_{\T;\de_{\T},\ep_{\T}}
~~\hbox{s.t.}~~
|t|\!<\!\de_{\T}\big(\pi_{\ti\F\T^\si}(\up)\big),~\Xi_{\T;t\nu}(\up,\ze)\!=\!0.
\end{split}\EE
Suppose $\T\!=\!(\ft,\fd)$ and $\ft$ is not basic so that  
$$\dim\,\ti\M_{\T,\f}(\cC)^\phi+\rk\,\ti\fF\T^{\si}   
<\dim\,\ti\M_{\T,\f}(\cC)^\phi+\rk\,\ti\F\T^{\si}
=\rk\,\Obs_\T^{\phi}.$$ 
Thus, the bundle map
$$\ti\F\T^{\si}\lra \Obs_\T^{\phi}, \qquad 
\up\lra \cD_\T^\phi\rho^\si_\T(\up)\!+\!\ov\nu_{\T}\big(\pi_{\ti\F\T}(\up)\big),$$
has no zeros over $\ti\M_{\T,\f}(\cC)^\phi$
for a generic choice of $\nu\!\in\!\A_{g,l}^{\phi}(J)$. 
By~\e_ref{VanBd_e} and the proof of \cite[Lemma~3.2]{g2n2and3},
for every precompact open subset $\ti{K}\!\subset\!\ti\M_{\T,\f}(\cC)^\phi$ 
there thus exist $\de_{\nu}(\ti{K})\!\in\!\R^+$ and an open neighborhood 
$\Om_{\nu}(\ti{K})$ of~$\ti{K}$ in $\Om_{\de_{\T},\ep_{\T}}$
such~that $\Xi_{\T;t\nu}$ does not vanish on~$\Om_{\nu}(\ti{K})$.
By Lemma~\ref{LMnbhd} and~\e_ref{Phidfn_e}, 
$$ U_{\nu}(\ti{K}) \equiv \X_{g,\f}(X,B)^\phi \cap
\bigg(\Phi_{\T}\big(\Om_{\nu}(\ti{K})\big)\!\times\!\prod_{i=1}^lY_i\bigg)$$
is an open neighborhood of $K\!=\!\io_{\T;\f}(\ti{K})$ in
 $\X_{g,\f}(X,B)^\phi$ satisfying~\e_ref{esetinter_e}
with $K$ replaced by~$\ti{K}$.
This establishes the first statement of Proposition~\ref{PrpSection}.\\

From now on we assume that $\T\!=\!(\ft,\fd)$ with $\ft$ basic.
Thus, the domain of each element of $\ti\fM_{\T}(X)^\phi$ consists 
of a smooth genus~$h$ curve~$\Si_{\u;0}$ with smooth positive-genus curves 
$$\wh\Si_{\u;e}^c=\Si_{\u;e}^c  \qquad\hbox{for}\quad e\in \E_{\bu}(\T)\!=\!\E_0(\T)$$ 
attached at distinct points.
Furthermore, $\ti\fF\T\!=\!\ti\F\T$ and $\rho_{\T}^{\si}(\up)\!=\!\up$.
The gluing construction now consists of only the second, obstructed step.\\

For each $\u\!\in\!\ti\fM_{\T}(X)^\phi$, let
$$\Ga_-^{0,1}(u)\subset \Ga\big(\Si;T^*\Si_u^{0,1}\!\otimes\!u^*(TX,J)\big)
\qquad\hbox{and}\qquad
\Ga_-^{0,1}(u)^{\phi}\equiv \Ga_-^{0,1}(u)\cap\!\Ga^{0,1}(u)^{\phi}$$
denote the subspace of harmonic $(0,1)$-forms and 
the subspace of $\phi$-invariant harmonic $(0,1)$-forms.
The former is generated by harmonic forms each of which is supported
on~$\wh\Si_{\u;e}^c$ for some $e\!\in\!\E_{\bu}(\T)$.
The homomorphism~\e_ref{Thudfn_e} restricts to an isomorphism
$$\Th_{\u}\!: \Ga_-^{0,1}(u)^{\phi}\lra \Obs_{\T}^{\phi}\big|_{\u}\,.$$
For each $\up\!\in\!\De_{\cC}^{\!\si}|_{\u}$, $e\!\in\!\E_{\bu}(\T)$, and 
$\eta\!\in\!\Ga_-^{0,1}(u)$ supported on $\Si_{\u;e}^c$, 
we define an element $R_{\up}\eta$ in $\Ga^{0,1}(u_{\bu;\up})$ by
$$R_{\up}\eta\big|_x=
\begin{cases}
\eta\!\circ\!\tnd_xq_{\up;e},&\hbox{if}~
x\!\in\!q_{\up}^{-1}(\Si_{\u;e}^c\!-\!\Si_{\u;e}(\de_{\dbar}(\u)));\\
\be_{\de_{\dbar}(\u)}(|z_e(x)|)\Pi_{\ti\ze_{\up;e}(x)}\!\circ\!\eta\!\circ\!\tnd_xq_{\up;e},
&\hbox{if}~x\!\in\!q_{\up}^{-1}(\Si_{\u;e}(\de_{\dbar}(\u)));\\
0,&\hbox{if}~x\!\in\!\Si_{\up}\!-\!q_{\up}^{-1}(\wh\Si_{\u;e}^c(\de_{\dbar}(\u))).
\end{cases}$$
This definition induces an injective homomorphism
$$R_{\up}\!:\Ga_-^{0,1}(u)^{\phi}\lra \Ga^{0,1}\big(u_{\bu;\up}\big)$$
that satisfies
\BE{Rvbnd_e}
\big\|R_{\up}\eta\big\|_{\up,p}\le C_R(\u)\|\eta\|,\quad
\big\|\Th_{\up}(R_{\up}\eta)-\Th_{\u}(\eta)\big\|
\le C_R(\u)|\up|^2\|\eta\| 
\qquad\forall~\eta\!\in\!\Ga_-^{0,1}(u)\EE
for some continuous function $C_R\!:\ti{U}_{\T,\f}^{\phi}(\cC)\!\lra\!\R$.\\

For each $\up\!\in\!\De_{\cC}^{\!\si}$, let
$$\Ga^{0,1}_-\big(u_{\bu;\up}\big)^{\phi}=
\big\{R_{\up}\eta\!:\,\eta\!\in\!\Ga_-^{0,1}(u)^{\phi}\big\}
\quad\hbox{and}\quad
\Ga^{0,1}_+\big(u_{\bu;\up}\big)^{\phi}=
\big\{\eta\!\in\!\Ga^{0,1}\big(u_{\bu;\up}\big)\!:\,\Th_{\up}(\eta)\!=\!0\big\}.$$
By~\e_ref{Rvbnd_e},
\BE{Ga01split_e}  
\Ga^{0,1}\big(u_{\bu;\up}\big)=
\Ga^{0,1}_-\big(u_{\bu;\up}\big)^{\phi}\oplus\Ga^{0,1}_+\big(u_{\bu;\up}\big)^{\phi}\EE
for all $\up$ sufficiently small;
by shrinking $\De_{\cC}^{\!\si}$, we can assume 
that \e_ref{Ga01split_e} holds for all $\up\!\in\!\De_{\cC}^{\!\si}$.
Denote~by 
$$\pi^{0,1}_{\up;+}\!: \Ga^{0,1}\big(u_{\bu;\up}\big)\lra \Ga^{0,1}_+\big(u_{\bu;\up}\big)^{\phi}$$
the  projection to the second component in the decomposition~\e_ref{Ga01split_e}.
By~\e_ref{Thbd_e} and \e_ref{Rvbnd_e}, there exists a continuous function
$C_+\!:\ti{U}_{\T,\f}^{\phi}(\cC)\!\lra\!\R^+$ such~that
\BE{piplusbd_e} \big\|\pi^{0,1}_{\up;+}\eta\big\|_{\up,p}\le 
C_+\big(\pi_{\ti\F\T^{\si}}(\up)\big)\|\eta\|_{\up,p}
 \qquad\forall~\eta\!\in\! \Ga^{0,1}\big(u_{\bu;\up}\big)\,.\EE
By~\e_ref{Dphiup_e}, \e_ref{DFbnd_e}, \e_ref{Rvbnd_e}, and~\e_ref{piplusbd_e}, 
 there exists a continuous function
such~that $\ti{C}_1\!:\ti{U}_{\T,\f}^{\phi}(\cC)\!\lra\!\R^+$ such~that 
\BE{Dphiup_e2}
\ti{C}_1\big(\pi_{\ti\F\T}(\up)\big)^{-1}\|\ze\|_{\up,p,1}
\le \big\|\pi^{0,1}_{\up;+}D^{\phi}_{u_{\bu;\up}}\ze\big\|_{\up,p}
\le  \ti{C}_1\big(\pi_{\ti\F\T}(\up)\big)\|\ze\|_{\up,p,1}
\quad\forall~\ze\!\in\!\Ga_+(\up)^\phi\EE
for all $\up$ sufficiently small;
by shrinking $\De_{\cC}^{\!\si}$, we can assume 
that \e_ref{Dphiup_e2} holds for all $\up\!\in\!\De_{\cC}^{\!\si}$.\\

By~\e_ref{Quad_e}, the identity $\Xi_{\T;t\nu}(\up,\ze)\!=\!0$ is equivalent 
to the system of equations
\BE{Quad_e2}
\begin{cases}
\pi^{0,1}_{\up;+}D^{\phi}_{u_{\bu;\up}}\ze+\pi^{0,1}_{\up;+}N^\phi_{\up;t}(\ze)
=-\pi^{0,1}_{\up;+}\dbar_Ju_{\bu;\up}-t\pi^{0,1}_{\up;+}\nu_{u_{\bu;\up}}
\in \Ga^{0,1}_+\big(u_{\bu;\up}\big)^{\phi} \,,\\
\Th_{\up}\big(\Xi_{\T;t\nu}(\up,\ze)\big)=0\in \Obs_{\T}^{\phi}\big|_{\u}.
\end{cases}\EE
By~\e_ref{Dphiup_e2} and index considerations, the homomorphism
$$\pi^{0,1}_{\up;+}D^{\phi}_{u_{\bu;\up}}\!:
\Ga_+\big(u_{\bu;\up}\big)^\phi\lra \Ga^{0,1}_+\big(u_{\bu;\up}\big)^{\phi}$$
is an isomorphism; 
its norm and the norm of its inverse are bounded depending only on~$\pi_{\ti\F\T}(\up)$.
In~light of~\e_ref{dbarbnd_e}, \e_ref{N_e},  and~\e_ref{piplusbd_e}, 
we can thus apply the Contraction Principle to the first equation in~\e_ref{Quad_e2} 
provided $|\up|$ and~$|t|$ are sufficiently small (depending on~$\pi_{\ti\F\T}(\up)$).
More precisely, for every precompact open subset $\ti{K}\!\subset\!\ti{U}_{\T,\f}^{\phi}(\cC)$ 
there exist $\de_{\nu}(\ti{K}),\ep_{\nu}(\ti{K}),C(\ti{K})\!\in\!\R^+$ such that 
$$\de_{\nu}(\ti{K})<\inf_{\ti{K}}\de_{\T}\,, \quad
\ep_{\nu}(\ti{K})<\inf_{\ti{K}}\ep_{\T}\,, \quad
2C_{\T}\de_{\nu}(\ti{K})<\ep_{\nu}(\ti{K})\,,$$
and for all $t\!\in\!\R$ and $\up\!\in\!\ti\F\T|_{\ti{K}}$
with $|t|,|\up|\!<\!\de_{\nu}(\ti{K})$  the first equation in~\e_ref{Quad_e2} 
has a unique solution $\ze_{t\nu}(\up)\!\in\!\Ga_+(u_{\bu;\up})^\phi$ with 
$\|\ze_{t\nu}(\up)\|_{\up,p,1}\!<\!\ep_{\nu}(\ti{K})$ and this solution
satisfies
\BE{zetnubnd_e} \big\|\ze_{t\nu}(\up)\big\|_{\up,p,1}
<C(\ti{K})\big(|\up|\!+\!|t|\big).\EE
If $\ti{K}$ is preserved by the $\ti\Aut^*(\T)$-action, 
then the function $\up\!\lra\!\ze_{t\nu}(\up)$ is $\ti\Aut^*(\T)$-equivariant.\\

The~maps 
$$\ti\ga_{\T;t\nu}\!:
\ti\F\T^{\si}_{\de_{\nu}(\ti{K})}\big|_{\ti{K}}\lra\X_{g,l}(X,B)^{\phi}, \quad
\up\lra 
\big[\up(\ze_{t\nu}(\up))\big]\equiv\big[\exp_{u_{\bu};\up}\!\ze_{t\nu}(\up),
\big(z^+_i(\up),z^-_i(\up)\big)_{i=1}^l\big],$$
with $|t|\!<\!\de_{\nu}(\ti{K})$ 
are the analogues of \cite[(3.16)]{gluing} in the present situation.
Let 
$$\ti\fM_{g,l}^{(t)}(\ti{K})^{\phi}\subset \X_{g,l}(X,B)^{\phi}$$
denote the image of~$\ti\ga_{\T;t\nu}$.
Since the evaluation map
$$\ev\!: \fM_{\T}(X)^\phi \lra X^l$$
is transverse to the pseudocycles~$\f$ along $\fM_{\T;\f}(\cC)^\phi$,
the maps~$\ti\ga_{\T;t\nu}$ can be adjusted for the constraints as in the proof
of \cite[Lemma~3.28]{gluing};
this adjustment is summarized~below.\\

We denote by
$$\pi_{\ti\cN^{\mu}}\!:\ti\cN^{\mu}\lra  \fM_{\T;\f}(\cC)^\phi$$
the normal bundle of $\fM_{\T;\f}(\cC)^\phi$ in~$\fM_{\T}(X)^\phi$.
Fix an $\ti\Aut^*(\T)$-equivariant identification 
$$\ti\phi_{\T}^{\mu}\!: \ti\cN^{\mu}\lra \ti\fM_{\T}(X)^\phi$$
between neighborhoods of $\fM_{\T;\f}(\cC)^\phi$ in $\ti\cN^{\mu}$
and in~$\fM_{\T}(X)^\phi$ restricting to the identity over~$\fM_{\T;\f}(\cC)^\phi$
and identifications
$$\ti\Phi_{\T}^{\mu}\!: 
\pi_{\ti\F\T^{\si}}^*\ti\cN^{\mu}\!=\!
\pi_{\ti\cN^{\mu}}^*\ti\F\T^{\si}\lra \ti\F\T^{\si}
\quad\hbox{and}\quad
\Pi_{\T}^{\mu}\!: \pi_{\ti\cN^{\mu}}^*\Obs_{\T}^{\phi}\lra\Obs_{\T}^{\phi}$$
of vector bundles lifting~$\ti\phi_{\T}^{\mu}$, 
restricting to the identity over~$\fM_{\T;\f}(\cC)^\phi$,
and respecting the splittings.\\

Let $K\!\subset\!\fM_{\T;\f}(\cC)^\phi$  be an open subset such that 
$\io_{\T,\f}^{-1}(K)\!\subset\!\ti{K}$ is precompact.
By Lemma~\ref{LMnbhd}, \e_ref{Phidfn_e}, \e_ref{zetnubnd_e}, and  
the proof of \cite[Lemma~3.28]{gluing}, 
there exist a neighborhood $U_{\nu}(K)$ of~$K$ in $\X_{g,\f}(X,B)^{\phi}$,
$\de_{\nu}(K),C_{\nu}(K)\!\in\!\R^+$, and a unique section
$$\ti\vph_{t\nu}^{\mu}\in 
\Ga\big(\ti\F\T_{\de_{\nu}(K)}^{\phi}\big|_{\io_{\T,\f}^{-1}(K)};
\ti\pi_{\ti\F\T^{\phi}}^*\ti\cN^{\mu}\big)$$
such that 
\BE{mubd_e} \big\|\ti\vph_{t\nu}^{\mu}(\up)\big\|
<C_{\nu}(K)\big(|\up|\!+\!|t|\big)\EE
and the~map
$$\Phi_{\T;t\nu}\!\equiv\!\ti\ga_{\T;t\nu}\!\circ\!\ti\Phi_{\T}^{\mu}
\!\circ\!\ti\vph_{t\nu}^{\mu}\!: 
\ti\F\T^{\si}_{\de_{\nu}(K)}|_{\io_{\T,\f}^{-1}(K)}\lra 
\ti\fM_{g,l}^{(t)}(\ti{K})^{\phi}\cap U_{\nu}(K)$$
is an $\ti\Aut^*(\T)$-invariant covering of degree  $|\ti\Aut^*(\T)|$.
By~\e_ref{zetnubnd_e} and~\e_ref{mubd_e}, $\Phi_{\T;t\nu}$
satisfies the first condition in~\e_ref{BdCond_e}.\\

We define the section  in~\e_ref{PsiDfn_e} by
$$\Psi_{\T;t\nu}(\up)= \big\{\Pi_{\T}^{\mu}|_{\ti\vph_{t\nu}^{\mu}(\up)}\big\}^{-1}
\Big(\Th_{\ti\Phi_{\T}^{\mu}(\ti\vph_{t\nu}^{\mu}(\up))}
\big(\Xi_{\T;t\nu}\big( \ti\Phi_{\T}^{\mu}(\ti\vph_{t\nu}^{\mu}(\up)),
\ze_{t\nu}\big(\ti\Phi_{\T}^{\mu}(\ti\vph_{t\nu}^{\mu}(\up))\big)\big)\big)\Big).$$
By~\e_ref{ThEst_e}, \e_ref{zetnubnd_e}, and~\e_ref{mubd_e},  
$\Psi_{\T;t\nu}(\up)$ satisfies~\e_ref{PhiEstimate_e} with~$\ve_{\T;t\nu}$ 
satisfying the second condition in~\e_ref{BdCond_e}.
By~\e_ref{Phidfn_e}, \e_ref{Quad_e2}, and the last sentence of the previous paragraph,
\e_ref{UnuK_e} is satisfied as well.
If $\nu$ is generic, then
$$\M^*_{g,\f}(X,B;J,t\nu)^\phi=\ov\M_{g,\f}(X,B;J,t\nu)^\phi\,.$$
The last statement of Proposition~\ref{PrpSection} is obtained
as in the proof of \cite[Corollary~3.26]{gluing}.
The crucial point in both cases is that the signs of the elements of
$\Psi_{\T;t\nu}^{-1}(0)$ are determined from the orientation of the deformation-obstruction
complex over each stratum associated with the moduli space~$\ov\M_{g,l}(X,B;J)^{\phi}$.

\section{Computations and applications}
\label{SecEx}

In this section, we determine the genus~$g$ degree $d\!=\!1,3,4$  
real GW- and enumerative invariants of $(\P^3,\tau_4)$ with $d$~conjugate pairs 
of point insertion; the latter is readily obtained from the former via~\e_ref{lowgenusGV_e}.
By \cite[Theorem~1.6]{RealGWsIII} and~\e_ref{FanoGV_e}, 
the genus~$g$ degree~$d$  real GW- and enumerative invariants vanish whenever $d\!-\!g\!\in\!2\Z$.
Examples~\ref{Exd1}-\ref{Exd4} apply~\cite[Theorem~4.6]{RealGWsIII} 
to compute the genus~$g$ degree $d\!=\!1,3,4$  real GW-invariants with $d\!-\!g\!\not\in\!2\Z$
by equivariant localization with the standard $(\C^*)^2$-action on~$(\P^3,\tau_4)$. 
In \cite{RealGWvsEnumApp}, these computations are carried out for $d\!=\!5,6,7,8$.
The degree~2 GW- and enumerative invariants vanish because there are no conics
passing through two generic conjugate pairs of points in~$\P^3$;
the vanishing of the GW-invariants in this case is shown by equivariant localization
in \cite[Example~4.10]{RealGWsIII}.
The results of these computations are summarized in Table~\ref{TabInvReal}. 
We conclude by combining Theorem~\ref{main_thm} with Castelnuovo bounds to obtain
conclusions about one- and two-partition Hodge integrals.

\subsection{Preliminaries}
\label{Prelim_subs}

Let $g,k\!\in\!\Z^{\ge 0}$ with $2g\!+\!k\!\ge\!3$.
We  denote~by
$$\bE\lra \ov\cM_{g,k}$$
the Hodge vector bundle of harmonic differentials over the Deligne-Mumford moduli space
of (complex) genus~$g$ curves with $k$~marked points.
For each $i\!=\!1,\ldots,k$, let
$$\psi_i\in H^2\big(\ov\cM_{g,k}\big)$$
be the first Chern class of the universal cotangent line bundle at the $i$-th marked point.
For a formal variable~$u$, we define
$$\La(u) = \sum_{r=0}^g\!
c_r(\bE^*)u^{g-r} \in H^*\big(\ov\cM_{g,k}\big)\big[u\big].$$
This is the equivariant Euler class of $\bE^*$ tensored with the trivial line bundle 
with the equivariant first Chern class equal to~$u$.\\

For formal variables $u_1$, $u_2$, and $u_3$, define
\begin{alignat*}{3}
I_{1;0}(u_1,u_2,u_3)&=1, &\quad
I_{1;g}(u_1,u_2,u_3)&=\int_{\ov\cM_{g,1}}\!\!\!\!
 \frac{\La(u_1)\La(u_2)\La(u_3)}{u_1(u_1\!-\!\psi_1)}
&~~&\forall\,g\!\in\!\Z^+,\\
I_{2;0}(u_1,u_2,u_3)&=\frac{(u_1\!+\!u_2)u_3}{u_1u_2}, &\quad
I_{2;g}(u_1,u_2,u_3)&=\frac{(u_1\!+\!u_2)^2u_3}{u_1u_2}\!\!\int_{\ov\cM_{g,2}}\!\!
 \frac{\La(u_1)\La(u_2)\La(u_3)}{(u_1\!-\!\psi_1)(u_2\!-\!\psi_2)}
&~~&\forall\,g\!\in\!\Z^+.
\end{alignat*} 
These integrals are known in the literature as \sf{one-} 
and \sf{two-partition Hodge integrals}.
We note~that 
\begin{alignat}{2}
\label{Isymm}
 I_{1;g}(u_1,u_2,u_3)&=I_{1;g}(u_1,u_3,u_2),&\qquad
 I_{2;g}(u_1,u_2,u_3)&=I_{2;g}(u_2,u_1,u_3),\\
\label{Isymm2}
I_{1;g}(u_1,u_2,u_3)&=I_{1;g}(-u_1,-u_2,-u_3),&\qquad
I_{2;g}(u_1,u_2,u_3)&=I_{2;g}(-u_1,-u_2,-u_3)
\end{alignat}
for all $g\!\in\!\Z^{\ge 0}$.

\begin{lmm}\label{LMHodge}
For $g,k\!\in\!\Z^{\ge0}$,
\begin{alignat*}{2}
 \int_{\ov\cM_{g,k}}\!\!\!\!\frac{\La(u_1)\La(u_2)\La(u_3)}{u_1(u_1\!-\!\psi_1)}
&=\big(u_1^{-1}\big)^{k-1}I_{1;g}(u_1,u_2,u_3)&\quad&\hbox{if}~\,
2g\!+\!k\!\ge\!3,\,k\!\ge\!1,\\
 \frac{(u_1\!+\!u_2)^2u_3}{u_1u_2}\int_{\ov\cM_{g,k}}
 \frac{\La(u_1)\La(u_2)\La(u_3)}{(u_1\!-\!\psi_1)(u_2\!-\!\psi_2)}
 &=\big(u_1^{-1}\!\!+\!u_2^{-1}\big)^{k-2}I_{2;g}(u_1,u_2,u_3)&\quad
 &\hbox{if}~\,2g\!+\!k\!\ge\!3,\,k\!\ge\!2.
\end{alignat*}
\end{lmm}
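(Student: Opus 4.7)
The plan is to establish both identities by induction on the number of marked points $k$, using the forgetful maps $\pi\!:\!\ov\cM_{g,k}\!\to\!\ov\cM_{g,k-1}$ that drop the last marked point, together with the classical string equation. Two facts drive the argument: the Hodge bundle pulls back, $\pi^{*}\bE\!=\!\bE$, so each Hodge polynomial $\La(u_i)$ is $\pi$-invariant; and the string equation gives, for any polynomial $Q$ in Hodge classes and any non-negative integers $a_1,\ldots,a_n$ with $n\!<\!k$,
$$\int_{\ov\cM_{g,k}}\!\psi_1^{a_1}\!\cdots\psi_n^{a_n}\,Q(\la)
=\sum_{j=1}^{n}\int_{\ov\cM_{g,k-1}}\!\psi_1^{a_1}\!\cdots\psi_j^{a_j-1}\!\cdots\psi_n^{a_n}\,Q(\la),$$
where terms carrying a negative exponent are set to zero.

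Expanding each factor $1/(u_i\!-\!\psi_i)$ as a formal geometric series $\sum_a \psi_i^a/u_i^{a+1}$ and applying the string equation term-by-term collapses the resulting sums back into the same geometric series shape on $\ov\cM_{g,k-1}$. For the first identity, this gives the one-step recursion
$$\int_{\ov\cM_{g,k}}\!\frac{\La(u_1)\La(u_2)\La(u_3)}{u_1(u_1\!-\!\psi_1)}
=\frac{1}{u_1}\int_{\ov\cM_{g,k-1}}\!\frac{\La(u_1)\La(u_2)\La(u_3)}{u_1(u_1\!-\!\psi_1)},$$
and iterating $k\!-\!1$ times reduces the left-hand side to $u_1^{-(k-1)}I_{1;g}(u_1,u_2,u_3)$ whenever $g\!\ge\!1$. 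For the second identity, the string equation contributes both a $u_1^{-1}$ piece (from shifting $\psi_1^a\!\to\!\psi_1^{a-1}$) and a $u_2^{-1}$ piece (from shifting $\psi_2^b\!\to\!\psi_2^{b-1}$), producing the recursion with coefficient $(u_1^{-1}\!+\!u_2^{-1})$, which I then iterate down to $\ov\cM_{g,2}$ and multiply by the prefactor $(u_1\!+\!u_2)^2 u_3/(u_1 u_2)$.

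What remains is the $g\!=\!0$ range, where the recursion can descend only as far as $\ov\cM_{0,3}$ before the lower moduli space ceases to exist. At this base case $\ov\cM_{0,3}$ is a point, all $\psi$- and Hodge classes vanish, and a direct evaluation gives $u_1^{-2}$ for the first integrand and $(u_1\!+\!u_2)^2 u_3/(u_1^2 u_2^2)$ for the prefactored second integrand, both matching the claimed formulas upon substituting $I_{1;0}\!=\!1$ and $I_{2;0}\!=\!(u_1\!+\!u_2)u_3/(u_1 u_2)$. No step presents a genuine conceptual obstacle; the whole argument is a straightforward bookkeeping application of the string equation, the only mild care being to respect the stability bound $2g\!+\!k\!\ge\!3$ at each reduction.
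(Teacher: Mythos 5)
Your proof is correct and follows essentially the same route as the paper: reduce to the definitions of $I_{1;g}$ and $I_{2;g}$ (or to the point $\ov\cM_{0,3}$ when $g=0$) by repeatedly forgetting a marked point, with the geometric-series expansion of $1/(u_i\!-\!\psi_i)$ turning each forgetful step into multiplication by $u_1^{-1}$, resp.\ $u_1^{-1}\!+\!u_2^{-1}$. The only cosmetic difference is that the paper labels the underlying identity the ``dilaton relation,'' though the computation it displays is exactly the string equation you invoke.
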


\begin{proof}
For $(g,k)\!=\!(0,3)$, both identities follow from the moduli space $\ov\cM_{0,3}$
being a single point.
In the remaining cases, they are consequences of the dilaton relation as shown below.
If $k\!\ge\!2$ and $2g\!+\!k\!\ge\!4$,
\begin{equation*}\begin{split}
\int_{\ov\cM_{g,k}}\!\!\!\!
 \frac{\La(u_1)\La(u_2)\La(u_3)}{u_1(u_1\!-\!\psi_1)}
&=\sum_{s=0}^{\i}\int_{\ov\cM_{g,k}}\!\!\!\!
 \La(u_1)\La(u_2)\La(u_3)\frac{\psi_1^s}{u_1^{s+2}}
=\sum_{s=1}^{\i}\int_{\ov\cM_{g,k-1}}\!\!\!\!\!\!\!
 \La(u_1)\La(u_2)\La(u_3)\frac{\psi_1^{s-1}}{u_1^{s+2}}\\
&=\sum_{s=0}^{\i}\int_{\ov\cM_{g,k-1}}\!\!\!\!\!\!\!
 \La(u_1)\La(u_2)\La(u_3)\frac{u_1^{-1}\psi_1^s}{u_1^{s+2}}
=u_1^{-1}\!\int_{\ov\cM_{g,k-1}}\!\!\!\!\!\!\!
 \frac{\La(u_1)\La(u_2)\La(u_3)}{u_1(u_1\!-\!\psi_1)}\,.
\end{split}\end{equation*}
If $k\!\ge\!3$ and $2g\!+\!k\!\ge\!4$,
\begin{equation*}\begin{split}
\int_{\ov\cM_{g,k}}\!\!
 \frac{\La(u_1)\La(u_2)\La(u_3)}{(u_1\!-\!\psi_1)(u_2\!-\!\psi_2)}
&=\sum_{s_1,s_2\ge0}\int_{\ov\cM_{g,k}}\!\!\!\!
 \La(u_1)\La(u_2)\La(u_3)\frac{\psi_1^{s_1}\psi_1^{s_2}}{u_1^{s_1+1}u_2^{s_2+1}}\\
&=\sum_{s_1,s_2\ge0}\int_{\ov\cM_{g,k-1}}\!\!\!\!\!\!\!\!\!\!
 \La(u_1)\La(u_2)\La(u_3)
\frac{(u_1^{-1}\!\!+\!u_2^{-1})\psi_1^{s_1}\psi_1^{s_2}}{u_1^{s_1+1}u_2^{s_2+1}}\\
&=\big(u_1^{-1}\!\!+\!u_2^{-1}\big)\!\!\int_{\ov\cM_{g,k-1}}\!\!
 \frac{\La(u_1)\La(u_2)\La(u_3)}{(u_1\!-\!\psi_1)(u_2\!-\!\psi_2)}\,.
\end{split}\end{equation*}
Both of the claimed identities now follow by induction from the base cases.
\end{proof}

In the next three examples, we denote by $H\!\in\!H^2(\P^3;\Q)$ the usual hyperplane class
and use the standard $(\C^*)^2$-action on~$(\P^3,\tau_4)$. 
The fixed points of this action~are
$$P_1=[1,0,0,0], \quad P_2=[0,1,0,0], \quad P_3=[0,0,1,0], \quad P_4=[0,0,0,1];$$
they satisfy $P_1\!=\!\tau_4(P_2)$ and $P_3\!=\!\tau_4(P_4)$.
Define
$$\tau_4\!:\big\{1,2,3,4\big\}\lra\big\{1,2,3,4\big\}
\qquad\hbox{by}\quad 
P_{\tau_4(i)}=\tau_4(P_i)\,.$$
We denote~by
$$\al_1=-\al_2\qquad\tn{and}\qquad\al_3=-\al_4$$
the \sf{weights} of the standard $(\C^*)^2$-action and by 
$$\x,\bfe\big(T\P^3\big),\bc\big(T\P^3\big)\in H_{\bT^2}^*(\P^3;\Q)$$ 
the equivariant hyperplane class
and the equivariant Euler and Chern classes of~$T\P^3$. 
Thus, 
\BE{xrestr}\x|_{P_i}=\al_i,  \quad
\bc\big(T\P^3\big)\big|_{P_i}=\prod_{j\neq i}\!\big(1\!+\!\x\!-\!\al_j\big)
\qquad\forall~i\!=\!1,2,3,4.\EE
For $i\!\in\!\Z$, let 
$$\lr{i}=\begin{cases}1,&\hbox{if}~i\!\not\in\!2\Z;\\
3,&\hbox{if}~i\!\in\!2\Z.\end{cases}$$
The  genus~$g$ degree~$d$ real GW-invariant of~$(\P^3,\tau_4)$ 
with $d$~pairs of conjugate point constraints is given~by
\BE{EquivInt_e}
\GW_{g,d}^{\P^3,\tau_4}\big(\underset{d}{\underbrace{H^3,\ldots,H^3}}\big)=
\int_{[\ov\fM_{g,d}(\P^3,d)^{\tau_4}]^{\vir}}
\prod_{i=1}^d\!\bigg(\!\ev_i^{\,*}\!\!\!\prod_{j\neq\lr{i}}\!(\x\!-\!\al_j)\!\bigg).\EE
By \cite[Theorem~4.6]{RealGWsIII}, this integral is the sum over contributions 
$\Cntr_{\Ga,\si}(3^d)$
from the $(\C^*)^2$-fixed loci~$\cZ_{\Ga,\si}$ corresponding to
the elements~$(\Ga,\si)$ of the collection~$\A_{g,d}(4,d)$ of \sf{admissible pairs};
these are reviewed below.\\

Each element $(\Ga,\si)$ of~$\A_{g,d}(4,d)$ consists of
a genus~$g$ \sf{$S$-marked $[4]$-labeled decorated graph}~$\Ga$ with $S$ as in~\e_ref{SeTcond_e0}
for $l\!=\!d$
and an involution~$\si$ on~$\Ga$.
The former consists of an $S$-marked decorated graph~$\T$ as in~\e_ref{Dual_e} 
and additional functions 
$$\vt\!:\Ver\lra\big\{1,2,3,4\big\} \qquad\hbox{and}\qquad
\d\!:\Edg\!\lra\!\Z^+$$
such that $\vt(v_1)\!\neq\!\vt(v_2)$ for every edge $e\!=\!\{v_1,v_2\}$
and the values of $\d$ add up to~$d$.
The bijection~$\si$ is an involution on~$\T$ as in~\e_ref{Invol_e} 
such that $\vt\!\circ\!\si\!=\!\tau_4\!\circ\!\vt$
and $\d\!\circ\!\si\!=\!\d$.
The first condition implies that $\V_{\R}^{\si}(\Ga)\!=\!\eset$.
A pair~$(\Ga,\si)$ is \sf{admissible} if $\d(e)\!\not\in\!2\Z$ for every $e\!\in\!\E_{\R}^{\si}(\Ga)$.
Since $\ev_i|_{\cZ_{\Ga,\si}}$ is the constant map with value~$P_{\vt(\fd(i))}$,
\e_ref{xrestr} implies~that 
\BE{D1fixedlocus}
\ev_i^{\,*}\!\!\!\prod_{j\neq\lr{i}}\!\!\!\big(\x\!-\!\al_j\big)
=\prod_{j\neq\lr{i}}\!\!\!\big(\al_{\vt(\fd(i^+))}\!-\!\al_j\big)
=\begin{cases}
\bfe(T_{P_{\lr{i}}}\P^3),&\hbox{if}~\vt(\fd(i^+))\!=\!\lr{i};\\
0,&\hbox{if}~\vt(\fd(i^+))\!\neq\!\lr{i};
\end{cases}\EE
for each $i\!=\!1,\ldots,d$.\\

For each edge $e\!=\!\{v_1,v_2\}$, let
$$\psi_{e;v_1}=\frac{\al_{v_2}\!-\!\al_{v_1}}{\d(e)}\,.$$
For each $v\!\in\!\Ver$, let 
$$S_v=\E_v(\Ga) \sqcup\fd^{-1}(v)$$
be the disjoint union of the edges leaving~$v$ and the marked points carried by~$v$.
If in addition $\val_v(\Ga)\!\ge\!3$ and $e\!\in\!\E_v(\Ga)$, let
$$\psi_{v;e}\in H^2\big(\ov\cM_{\g(v),S_v}\big)$$
denote the first Chern class of the universal cotangent line bundle associated 
with the marked point indexed by~$e$.\\

If $\vt(\fd(i^+))\!=\!\lr{i}$ and  $\val_v(\Ga)\!\ge\!3$, the contribution
of the vertex~$v$ to $\Cntr_{\Ga,\si}(3^d)$ is given~by \cite[(4.18)]{RealGWsIII}
as 
\BE{Contr3V_e}
(-1)^{\fs_v}\Cntr_{\Ga,\si;v}(3^d)=-(-1)^{\g(v)+|\E_v(\Ga)|}
\bfe\big(T_{P_{\vt(v)}}\P^3\big)^{\!|S_v|-1}
 \!\!\!\!\!\!\!\!\!
\int\limits_{\ov\cM_{\g(v),S_v}}\!\!
\frac{\bfe(\bE^*\!\otimes\!T_{P_{\vt(v)}}\P^3)}
{\prod\limits_{e\in\E_v(\Ga)}\!\!\!\!\!\!\psi_{e;v}\!\left(\psi_{e;v}\!+\!\psi_{v;e}\right)}\,.\EE
By the second statement in~\e_ref{xrestr}, 
\BE{EtoLa_e}\bfe\big(\bE^*\!\otimes\!T_{P_{\vt(v)}}\P^3\big)
=\prod_{j\neq\vt(v)}\!\!\!\!\La\big(\al_{\vt(v)}\!-\!\al_j\big)\,.\EE
If $\vt(\fd(i^+))\!=\!\lr{i}$ and  $\val_v(\Ga)\!\in\!\{1,2\}$, the contribution
of the vertex~$v$ to $\Cntr_{\Ga,\si}(3^d)$ is given~by \cite[(4.19)]{RealGWsIII}
as 
\BE{Contr2V_e}\begin{split}
(-1)^{\fs_v}\Cntr_{\Ga,\si;v}(3^d)&=(-1)^{|\fd^{-1}(v)|}
\bfe\big(T_{P_{\vt(v)}}\P^3\big)^{|S_v|-1}\\
&\hspace{1in}
\times\bigg(\prod\limits_{e\in\E_v(\Ga)}\!\!\!\!\!\!\psi_{e;v}\bigg)^{\!-1}
\!\bigg(\!\sum\limits_{e\in\E_v(\Ga)}\!\!\!\!\!\psi_{e;v}\!
\bigg)^{\!3-|S_v|-|\E_v(\Ga)|}\,.
\end{split}\EE 
If $\vt(\fd(i^+))\!\neq\!\lr{i}$, then $(-1)^{\fs_v}\Cntr_{\Ga,\si;v}(3^d)\!=\!0$
by the second case of~\e_ref{D1fixedlocus}.\\

If $e\!\in\!\E_{\C}^{\si}(\T)$, the contribution
of the edge~$e$ to $\Cntr_{\Ga,\si}(3^d)$ is given~by \cite[(4.22)]{RealGWsIII}
as
\BE{ContrEC_e}\begin{split}
\Cntr_{\Ga,\si;e}&=\frac{(-1)^{\d(e)}}{\d(e)\,(\d(e)!)^2}
\frac{1}
{\left(\!\frac{\al_{\vt(v_1)}-\al_{\vt(v_2)}}{\d(e)}\!\right)^{\!\!2\d(e)-2} \!\!\!\!\!\!\!\!\!\!\!
\prod\limits_{j\neq\vt(v_1),\vt(v_2)}\prod\limits_{r=0}^{\d(e)}\!\!
\left(\!\frac{(\d(e)-r)\al_{\vt(v_1)}+r\al_{\vt(v_2)}}{\d(e)}\!-\!\al_j\!\right)}\,.
\end{split}\EE
If $e\!\in\!\E_{\R}^{\si}(\Ga)$, the contribution
of the edge~$e$ to $\Cntr_{\Ga,\si}(3^d)$ is given~by \cite[(4.23)]{RealGWsIII} as
\BE{ContrER_e}
 \Cntr_{\Ga,\si;e}=
\frac{(-1)^{\frac{\d(e)-1}{2}}}{\d(e)\d(e)!}\,
\frac{1}
{\left(\!2\frac{\al_{\vt(v_1)}}{\d(e)}\!\right)^{\!\d(e)-1}\!\!\!\!\!\!\!\!\!\!\!
\prod\limits_{j\neq\vt(v_1),\vt(v_2)}
\!\!\!\!\!\!\!\prod\limits_{r=0}^{(\d(e)-1)/2}\!\!
\left(\frac{(\d(e)-2r)\al_{\vt(v_1)}}{\d(e)}\!-\!\al_j\right)}.\EE
The description of the contribution  $\Cntr_{\Ga,\si}(3^d)$ of~$\cZ_{\Ga,\si}$ to~\e_ref{EquivInt_e}
in \cite[Theorem~4.6]{RealGWsIII} involves picking any subsets
\begin{gather*}
\V_+^{\si}(\Ga)\subset\Ver  \quad\hbox{and}\quad \E_+^{\si}(\Ga)\!\subset\!\E_{\C}^{\si}(\Ga)
\qquad\hbox{s.t.}\\
\Ver=\V_+^{\si}(\Ga) \sqcup \si\big(\V_+^{\si}(\Ga)\big)
\qquad\hbox{and}\qquad
\Edg=\E_{\R}^{\si}(\Ga) \sqcup \E_+^{\si}(\Ga) \sqcup \si\big(\E_+^{\si}(\Ga)\big).
\end{gather*}
By \cite[(4.26)]{RealGWsIII},
\BE{EquivLocal_e}\Cntr_{\Ga,\si}(3^d)=\frac{1}{|\Aut(\Ga,\si)|}
\prod_{v\in\V_+^{\si}(\Ga)}\!\!\!\!\!(-1)^{\fs_v}\Cntr_{\Ga,\si;v}(3^d)
\prod_{e\in\E_{\R}^{\si}(\Ga)\sqcup\E_+^{\si}(\Ga)}
\hspace{-.33in}\Cntr_{\Ga,\si;e}\,.\EE
We will call an element~$(\Ga,\si)$ of~$\A_{g,d}(4,d)$ \sf{contributing}
if $\vt(\fd(i^+))\!=\!\lr{i}$ for every $i\!=\!1,\ldots,d$.
By \cite[Theorem~4.6]{RealGWsIII}, the number~\e_ref{EquivInt_e}
is the sum of the rational fractions~\e_ref{EquivLocal_e} over
the contributing elements of~$\A_{g,d}(4,d)$.

\subsection{Low-degree real GW-invariants}
\label{LowDegree_subs}

We next apply the above setup to evaluate the integral in~\e_ref{EquivInt_e}
for $d\!=\!1,3,4$ and $g\!\in\!\Z^{\ge0}$ such that $d\!-\!g\!\not\in\!2\Z$.

\begin{eg}[$d\!=\!1$]\label{Exd1}
We compute the genus~$g$ degree~1 real GW-invariant of $(\P^3,\tau_4)$ 
with 1~conjugate pair of point constraints for every $g\!\in\!2\Z^{\ge0}$.
This invariant is given by~\e_ref{EquivInt_e} with $d\!=\!1$.   
If $(\Ga,\si)$ is an element of~$\A_{g,1}(4,1)$, then $\Ga$ consists of a single edge 
$e\!=\!\{v_1,v_2\}$ with 
$$e\in\E_{\R}^{\si}, \qquad \d(e)=1, \qquad \g(v_1)=\g(v_2)=\frac{g}{2},
\qquad \fd(1^+)=1, \qquad \fd(1^-)=2\,.$$
If $(\Ga,\si)$ is a contributing pair, then 
\begin{gather*}
\vt(v_1)=1, \quad \vt(v_2)=2, \quad \psi_{e;v_1}=-2\al_i, \quad 
\Cntr_{\Ga,\si;e}=\frac{1}{\al_1^2\!-\!\al_3^2}\,,\\
(-1)^{\fs_{v_1}}\Cntr_{\Ga,\si;v_1}(3^1)=\al_1^2\!-\!\al_3^2 \quad\hbox{if}~g\!=\!0\,;
\end{gather*}
the last two statements follow from~\e_ref{ContrER_e} and~\e_ref{Contr2V_e}.
If $g\!=\!2g'\!>\!0$, then \e_ref{Contr3V_e}, \e_ref{EtoLa_e}, and the first statement 
of Lemma~\ref{LMHodge} give 
\begin{equation*}\begin{split}
(-1)^{\fs_{v_1}}\Cntr_{\Ga,\si;v_1}(3^1)
&=(-1)^{g'}2\al_1\big(\al_1^2\!-\!\al_3^2\big)
\int_{\ov\cM_{g',2}}\!\!\!\!\!\!
\frac{\La(2\al_1)\La(\al_1\!-\!\al_3)\La(\al_1\!+\!\al_3)}{2\al_1(2\al_1\!-\!\psi_1)}\\
&=(-1)^{g'}\big(\al_1^2\!-\!\al_3^2\big)
I_{1;g'}\big(2\al_1,\al_1\!-\!\al_3,\al_1\!+\!\al_3\big).
\end{split}\end{equation*}
Combining the last three equations with~\e_ref{EquivLocal_e}, we obtain
\BE{d1GW_e}\GW_{g,1}^{\P^3,\tau_4}\big(H^3\big) 
=(-1)^{g/2}I_{1;g/2}
\big(2\al_1,\al_1\!-\!\al_3,\al_1\!+\!\al_3\big)\qquad\forall\,g\!\in\!2\Z^{\ge 0}\,.\EE
In particular, we obtain the numbers $\GW_{0,d}^{\phi}$, $\GW_{2,d}^{\phi}$, 
and $\GW_{4,d}^{\phi}$ listed in the $d\!=\!1$ column of Table~\ref{TabInvReal}.
\end{eg}

\begin{eg}[$d\!=\!3$]\label{Exd3}
Let  $g\!\in\!2\Z^{\ge0}$.
The genus~$g$ degree~3 real GW-invariant of $(\P^3,\tau_4)$ 
with 3~conjugate pairs of point constraints 
is given by~\e_ref{EquivInt_e} with $d\!=\!3$. 
If $(\Ga,\si)\!\in\!\A_{g,3}(4,3)$ is  a contributing pair, then
\BE{D3vt}
\vt\big(\fd(1^+)\big),\vt\big(\fd(3^+)\big)=1,\quad
\vt\big(\fd(1^-)\big),\vt\big(\fd(3^-)\big)=2,\quad
 \vt\big(\fd(2^+)\big)=3,\quad \vt\big(\fd(2^-)\big)=4.\EE
There are 4 types of admissible pairs satisfying~\e_ref{D3vt};
see Figure~\ref{Figd=3}.
The number next to each vertex~$v$ in this figure, i.e.~1,2,3, or~4,  is~$\vt(v)$,
the number next to each edge is~$\d(e)$, and 
the arrows indicate the involutions.
The labels~$g_1$ and~$g_2$ next to the top vertices are the values of~$\g$ 
on these vertices.
Since the values of~$\g$ are preserved by the involution, $g_1\!+\!g_2\!=\!g/2$.
We can take $\V_+^{\si}(\Ga)$ to consist of the two top vertices and
$\E_+^{\si}(\Ga)$ of the top edge~$e_1$.
We denote by $v_1$ the left top vertex, by~$v_2$ the right top vertex, and
by~$e_2$ the vertical edge.
In the case of the first diagram in Figure~\ref{Figd=3},
\e_ref{Contr3V_e}-\e_ref{ContrER_e}  and Lemma~\ref{LMHodge} give
\begin{gather*} 
\Cntr_{\Ga_1,\si_1;e_1}=\frac{-1}{4\al_1\al_3(\al_1\!+\!\al_3)^2}\,, \qquad
\Cntr_{\Ga_1,\si_1;e_2}=\frac{1}{\al_1^2\!-\!\al_3^2}\,,\\
(-1)^{\fs_{v_1}}\Cntr_{\Ga_1,\si_1;v_1}(3^3)=-(-1)^{g_1}
2\al_1\big(\al_1^2\!-\!\al_3^2\big)\big(\al_1\!+\!\al_3\big)
I_{2;g_1}\!\big(\al_1\!-\!\al_3,2\al_1,\al_1\!+\!\al_3\big),\\
(-1)^{\fs_{v_2}}\Cntr_{\Ga_1,\si_1;v_2}(3^3)=(-1)^{g_2}
2\al_3\big(\al_3\!+\!\al_1\big)
I_{1;g_2}\!\big(\al_3\!-\!\al_1,2\al_3,\al_3\!+\!\al_1\big).
\end{gather*}
By~\e_ref{EquivLocal_e}, the contributions of the first two diagrams 
to~\e_ref{EquivInt_e} with $d\!=\!3$ are~thus
\BE{d3c12_e}\begin{split}
\Cntr_{\Ga_1,\si_1}\!(3^3)&=(-1)^{g/2}
I_{2;g_1}\!\big(\al_1\!-\!\al_3,2\al_1,\al_1\!+\!\al_3\big)
I_{1;g_2}\!\big(\al_3\!-\!\al_1,2\al_3,\al_1\!+\!\al_3\big),\\
\Cntr_{\Ga_2,\si_2}\!(3^3)&=(-1)^{g/2}I_{2;g_1}\!\big(\al_1\!+\!\al_3,2\al_1,\al_1\!-\!\al_3\big)
I_{1;g_2}\!\big(\!\al_1\!+\!\al_3,2\al_3,\al_3\!-\!\al_1\big);
\end{split}\EE
the second expression is obtained from the first by replacing $\al_3$ by~$-\al_3$
(which corresponds to interchanging~3 and~4)
and using the second identity in~\e_ref{Isymm2}.
In the case of the third diagram in Figure~\ref{Figd=3},
\e_ref{Contr3V_e}-\e_ref{ContrER_e}  and Lemma~\ref{LMHodge} give
\begin{gather*} 
\Cntr_{\Ga_3,\si_3;e_1}=\frac{-1}{4\al_1\al_3(\al_1\!+\!\al_3)^2}\,, \qquad
\Cntr_{\Ga_3,\si_3;e_2}=\frac{1}{\al_3^2\!-\!\al_1^2}\,,\\
(-1)^{\fs_{v_1}}\Cntr_{\Ga_3,\si_3;v_1}(3^3)=(-1)^{g_1}
4\al_1^2\big(\al_1\!+\!\al_3\big)^2
I_{1;g_1}\!\big(\al_1\!-\!\al_3,2\al_1,\al_1\!+\!\al_3\big),\\
(-1)^{\fs_{v_2}}\Cntr_{\Ga_3,\si_3;v_2}(3^3)=-(-1)^{g_2}
\frac{2\al_3(\al_3^2\!-\!\al_1^2)}{3\al_3\!-\!\al_1}
I_{2;g_2}\!\big(\al_3\!-\!\al_1,2\al_3,\al_3\!+\!\al_1\big).
\end{gather*}
By~\e_ref{EquivLocal_e}, the contributions of the last two diagrams 
to~\e_ref{EquivInt_e} with $d\!=\!3$ are~thus
\BE{d3c34_e}\begin{split}
\Cntr_{\Ga_3,\si_3}\!(3^3)&=\frac{(-1)^{g/2}2\al_1}{3\al_3\!-\!\al_1}
I_{1;g_1}\!\big(\al_1\!-\!\al_3,2\al_1,\al_1\!+\!\al_3\big)
I_{2;g_2}\!\big(\al_3\!-\!\al_1,2\al_3,\al_1\!+\!\al_3\big),\\
\Cntr_{\Ga_4,\si_4}\!(3^3)&=\frac{-(-1)^{g/2}2\al_1}{3\al_3\!+\!\al_1}
I_{1;g_1}\!\big(\al_1\!+\!\al_3,2\al_1,\al_1\!-\!\al_3\big)
I_{2;g_2}\!\big(\al_1\!+\!\al_3,2\al_3,\al_3\!-\!\al_1\big).
\end{split}\EE
The genus~$g$ degree~3 real GW-invariant of $(\P^3,\tau_4)$ 
with 3~conjugate pairs of point constraints is the sum of 
the four fractions in~\e_ref{d3c12_e} and~\e_ref{d3c34_e} 
over all $g_1,g_2\!\in\!\Z^{\ge0}$ with $g_1\!+\!g_2\!=\!g/2$.
In particular, we obtain the numbers $\GW_{0,d}^{\phi}$, $\GW_{2,d}^{\phi}$, 
and $\GW_{4,d}^{\phi}$ listed in the $d\!=\!3$ column of Table~\ref{TabInvReal}.
\end{eg}

\begin{figure}
\begin{center}
\begin{tikzpicture}
 \def\edgmrk{
  (-0.5,1.5)--(2,1.5)
  (-0.5,0)--(2,0)
  (1.5,0)--(1.5,1.5)
  (0,0)--(-.5,0.4)
  (0,1.5)--(-.5,1.1)
 };
 \def\varedgmrk{
  (-0.5,1.5)--(2,1.5)
  (-0.5,0)--(2,0)
  (0,0)--(0,1.5)
  (0,0)--(-.5,0.4)
  (0,1.5)--(-.5,1.1)
 };
 \def\ver{
 (0,0) circle (1.5pt)
 (0,1.5) circle (1.5pt)
 (1.5,0) circle (1.5pt)
 (1.5,1.5) circle (1.5pt)
 };
 \draw[thick,xshift=4cm]\varedgmrk;
 \draw[thick,xshift=8cm]\edgmrk;
 \draw[thick,xshift=12cm]\edgmrk;
 \draw[thick]\varedgmrk;
 \filldraw\ver;
 \filldraw[xshift=4cm]\ver;
 \filldraw[xshift=8cm]\ver;
 \filldraw[xshift=12cm]\ver;
 \draw (-0.1,0.75)node{\small 1}
 (3.9,0.75)node{\small 1}
 (9.6,0.75)node{\small 1}
 (13.6,0.75)node{\small 1}
 (0.75,1.7)node{\small 1}
 (4.75,1.7)node{\small 1}
 (8.75,1.7)node{\small 1}
 (12.75,1.7)node{\small 1}
 (0.75,-0.2)node{\small 1}
 (4.75,-0.2)node{\small 1}
 (8.75,-0.2)node{\small 1}
 (12.75,-0.2)node{\small 1};
 \draw (0,1.7)node{\small $\mathfrak 1$} 
 (4,1.7)node{\small $\mathfrak 1$}
 (8,1.7)node{\small $\mathfrak 1$}
 (12,1.7)node{\small $\mathfrak 1$}
 (1.5,1.7)node{\small $\mathfrak 3$}
 (5.5,1.7)node{\small $\mathfrak 4$}
 (9.5,1.7)node{\small $\mathfrak 3$}
 (13.5,1.7)node{\small $\mathfrak 4$}
 (1.5,-.2)node{\small $\mathfrak 4$}
 (5.5,-.2)node{\small $\mathfrak 3$}
 (9.5,-.2)node{\small $\mathfrak 4$}
 (13.5,-.2)node{\small $\mathfrak 3$}
 (0,-.2)node{\small $\mathfrak 2$}
 (4,-.2)node{\small $\mathfrak 2$}
 (8,-.2)node{\small $\mathfrak 2$}
 (12,-.2)node{\small $\mathfrak 2$};
 \draw (-0.7,0.1)node{\small $\mathit 1^-$} 
 (3.3,0.1)node{\small $\mathit 1^-$}
 (7.3,0.1)node{\small $\mathit 1^-$}
 (11.3,0.1)node{\small $\mathit 1^-$}
 (-0.7,0.55)node{\small $\mathit 3^-$}
 (3.3,0.55)node{\small $\mathit 3^-$}
 (7.3,0.55)node{\small $\mathit 3^-$}
 (11.3,0.55)node{\small $\mathit 3^-$}
 (-0.7,1.2)node{\small $\mathit 3^+$}
 (3.3,1.2)node{\small $\mathit 3^+$}
 (7.3,1.2)node{\small $\mathit 3^+$}
 (11.3,1.2)node{\small $\mathit 3^+$}
 (-0.7,1.6)node{\small $\mathit 1^+$}
 (3.3,1.6)node{\small $\mathit 1^+$}
 (7.3,1.6)node{\small $\mathit 1^+$}
 (11.3,1.6)node{\small $\mathit 1^+$}
 (2.25,1.65)node{\small $\mathit 2^+$}
 (6.25,1.65)node{\small $\mathit 2^-$}
 (10.25,1.65)node{\small $\mathit 2^+$}
 (14.25,1.65)node{\small $\mathit 2^-$}
 (2.25,0)node{\small $\mathit 2^-$}
 (6.25,0)node{\small $\mathit 2^+$}
 (10.25,0)node{\small $\mathit 2^-$}
 (14.25,0)node{\small $\mathit 2^+$};
 \draw [<->,>=stealth] (4.75,0.2)--(4.75,1.3); 
 \draw [<->,>=stealth] (8.75,0.2)--(8.75,1.3);
 \draw [<->,>=stealth] (12.75,0.2)--(12.75,1.3);
 \draw [<->,>=stealth] (0.75,0.2)--(0.75,1.3);
 \draw (0.2,1.3) node {\footnotesize $g_1$}
 (4.2,1.3) node {\footnotesize $g_1$}
 (8.2,1.3) node {\footnotesize $g_1$}
 (12.2,1.3) node {\footnotesize $g_1$}
 (1.7,1.3) node {\footnotesize $g_2$}
 (5.7,1.3) node {\footnotesize $g_2$}
 (9.7,1.3) node {\footnotesize $g_2$}
 (13.7,1.3) node {\footnotesize $g_2$};
\end{tikzpicture}
\end{center}
\caption{Admissible pairs potentially contributing to~\e_ref{EquivInt_e} with $d\!=\!3$}
\label{Figd=3}
\end{figure}

 \begin{figure}
\begin{center}
\begin{tikzpicture}
 \def\edgmrk{
  (-0.5,1.5)--(2,1.5)
  (-0.5,0)--(2,0)
  (0,0)--(0,1.5)
  (1.5,0)--(1.5,1.5)
  (0,0)--(-.5,0.4)
  (0,1.5)--(-.5,1.1)
  (1.5,0)--(2,0.4)
  (1.5,1.5)--(2,1.1)
 };
 \def\varedgmrk{
  (-0.5,1.5)--(2,1.5)
  (-0.5,0)--(2,0)
  (0,0)..controls(-0.3,0.3) and (-0.3,1.2)..(0,1.5)
  (0,0)..controls(0.3,0.3) and (0.3,1.2)..(0,1.5)
  (0,0)--(-.5,0.4)
  (0,1.5)--(-.5,1.1)
  (1.5,0)--(2,0.4)
  (1.5,1.5)--(2,1.1)
 };
 \def\ver{
 (0,0) circle (1.5pt)
 (0,1.5) circle (1.5pt)
 (1.5,0) circle (1.5pt)
 (1.5,1.5) circle (1.5pt)
 };
 \draw[thick,xshift=4cm]\edgmrk;
 \draw[thick,xshift=8cm]\edgmrk;
 \draw[thick,xshift=12cm]\edgmrk;
 \draw[thick]\varedgmrk;
 \filldraw\ver;
 \filldraw[xshift=4cm]\ver;
 \filldraw[xshift=8cm]\ver;
 \filldraw[xshift=12cm]\ver;
 \draw (-0.34,0.75)node{\small 1}
 (0.12,0.75)node{\small 1}
 (5.6,0.75)node{\small 1}
 (3.9,0.75)node{\small 1}
 (7.9,0.75)node{\small 1}
 (11.9,0.75)node{\small 1}
 (9.6,0.75)node{\small 1}
 (13.6,0.75)node{\small 1}
 (0.75,1.7)node{\small 1}
 (4.75,1.7)node{\small 1}
 (8.75,1.7)node{\small 1}
 (12.75,1.7)node{\small 1}
 (0.75,-0.2)node{\small 1}
 (4.75,-0.2)node{\small 1}
 (8.75,-0.2)node{\small 1}
 (12.75,-0.2)node{\small 1};
 \draw (0,1.7)node{\small $\mathfrak 1$} 
 (4,1.7)node{\small $\mathfrak 1$}
 (8,1.7)node{\small $\mathfrak 1$}
 (12,1.7)node{\small $\mathfrak 1$}
 (1.5,1.7)node{\small $\mathfrak 3$}
 (5.5,1.7)node{\small $\mathfrak 3$}
 (9.5,1.7)node{\small $\mathfrak 4$}
 (13.5,1.7)node{\small $\mathfrak 3$}
 (1.5,-.2)node{\small $\mathfrak 4$}
 (5.5,-.2)node{\small $\mathfrak 4$}
 (9.5,-.2)node{\small $\mathfrak 3$}
 (13.5,-.2)node{\small $\mathfrak 2$}
 (0,-.2)node{\small $\mathfrak 2$}
 (4,-.2)node{\small $\mathfrak 2$}
 (8,-.2)node{\small $\mathfrak 2$}
 (12,-.2)node{\small $\mathfrak 4$};
 \draw (-0.7,0.1)node{\small $\mathit 1^-$} 
 (3.3,0.1)node{\small $\mathit 1^-$}
 (7.3,0.1)node{\small $\mathit 1^-$}
 (11.3,0.1)node{\small $\mathit 2^-$}
 (-0.7,0.55)node{\small $\mathit 3^-$}
 (3.3,0.55)node{\small $\mathit 3^-$}
 (7.3,0.55)node{\small $\mathit 3^-$}
 (11.3,0.55)node{\small $\mathit 4^-$}
 (-0.7,1.2)node{\small $\mathit 3^+$}
 (3.3,1.2)node{\small $\mathit 3^+$}
 (7.3,1.2)node{\small $\mathit 3^+$}
 (11.3,1.2)node{\small $\mathit 3^+$}
 (-0.7,1.6)node{\small $\mathit 1^+$}
 (3.3,1.6)node{\small $\mathit 1^+$}
 (7.3,1.6)node{\small $\mathit 1^+$}
 (11.3,1.6)node{\small $\mathit 1^+$}
 (2.25,1.65)node{\small $\mathit 2^+$}
 (6.25,1.65)node{\small $\mathit 2^+$}
 (10.25,1.65)node{\small $\mathit 2^-$}
 (14.25,1.65)node{\small $\mathit 2^+$}
 (2.3,1.15)node{\small $\mathit 4^+$}
 (6.3,1.15)node{\small $\mathit 4^+$}
 (10.3,1.15)node{\small $\mathit 4^-$}
 (14.3,1.15)node{\small $\mathit 4^+$}
 (2.3,0.55)node{\small $\mathit 4^-$}
 (6.3,0.55)node{\small $\mathit 4^-$}
 (10.3,0.55)node{\small $\mathit 4^+$}
 (14.3,0.55)node{\small $\mathit 3^-$}
 (2.25,0)node{\small $\mathit 2^-$}
 (6.25,0)node{\small $\mathit 2^-$}
 (10.25,0)node{\small $\mathit 2^+$}
 (14.25,0)node{\small $\mathit 1^-$};
 \draw [<->,>=stealth] (4.75,0.4)--(4.75,1.1); 
 \draw [<->,>=stealth] (8.75,0.4)--(8.75,1.1);
 \draw [<->,>=stealth] (12.4,0.4)--(13.1,1.1);
 \draw [<->,>=stealth] (13.1,0.4)--(12.4,1.1);
 \draw [<->,>=stealth] (0.3,0.4)--(0.5,1.1);
 \draw [<->,>=stealth] (0.5,0.4)--(0.3,1.1);
 \draw [<->,>=stealth] (0.8,0.4)--(0.8,1.1);
 \draw [<->,>=stealth] (0.95,0.4)--(0.95,1.1);
 \draw (0.4,1.3) node {\footnotesize $g_1$}
 (4.2,1.3) node {\footnotesize $g_1$}
 (8.2,1.3) node {\footnotesize $g_1$}
 (12.2,1.3) node {\footnotesize $g_1$}
 (1.3,1.3) node {\footnotesize $g_2$}
 (5.3,1.3) node {\footnotesize $g_2$}
 (9.3,1.3) node {\footnotesize $g_2$}
 (13.3,1.3) node {\footnotesize $g_2$};
 \draw (0.63,.75) node{\tiny{or}};
\end{tikzpicture}
\end{center}
\caption{Admissible pairs potentially contributing to~\e_ref{EquivInt_e} with $d\!=\!4$}
\label{Figg=1d=4}
\end{figure}

\begin{eg}[$d\!=\!4$]\label{Exd4}
Let  $g\!\in\!\Z^+\!\!-\!2\Z$.
The genus~$g$ degree~4 real GW-invariant of $(\P^3,\tau_4)$ 
with 3~conjugate pairs of point constraints 
is given by~\e_ref{EquivInt_e} with $d\!=\!4$. 
If $(\Ga,\si)\!\in\!\A_{g,4}(4,4)$ is  a contributing pair, then
\BE{D4vt} \vt\big(\fd(1^+)\big),\vt\big(\fd(3^+)\big)=1,\qquad
 \vt\big(\fd(2^+)\big),\vt\big(\fd(4^+)\big)=3.\EE
There are 11 types of admissible pairs satisfying~\e_ref{D3vt}:
the first diagram of Figure~\ref{Figg=1d=4}, 
with the 4~possible ways of labeling the vertices and 
the 2~possible involutions on the loop of edges, and 
the remaining 3 diagrams in this figure.
All 8 versions of the first diagram in Figure~\ref{Figg=1d=4} have 
the same automorphism group~$\Z_2$ and all edges of degree~1;
their  contributions thus cancel in pairs by~\cite[Corollary~4.8]{RealGWsIII}.
For each of the remaining diagrams,  
we denote  the top left and right vertices by~$v_1$ and~$v_2$, respectively,
and the top, left, and right edges by~$e_1$, $e_2$, and~$e_3$, respectively. 
We take $\V_+^{\si}(\Ga)\!=\!\{v_1,v_2\}$ in all three cases,
$\E_+^{\si}(\Ga)\!=\!\{e_1\}$ for the middle two diagrams,
and $\E_+^{\si}(\Ga)\!=\!\{e_1,e_2\}$ for the last diagram.
In the case of the second diagram in Figure~\ref{Figg=1d=4},
\e_ref{ContrEC_e},  \e_ref{ContrER_e}, \e_ref{Contr3V_e}, \e_ref{EtoLa_e}, 
and Lemma~\ref{LMHodge} give
\begin{gather*} 
\Cntr_{\Ga_2,\si_2;e_1}=\frac{-1}{4\al_1\al_3(\al_1\!+\!\al_3)^2}\,, \quad
\Cntr_{\Ga_2,\si_2;e_2}=\frac{1}{\al_1^2\!-\!\al_3^2}\,, \quad
\Cntr_{\Ga_2,\si_2;e_3}=\frac{1}{\al_3^2\!-\!\al_1^2}\,,\\
(-1)^{\fs_{v_1}}\Cntr_{\Ga_2,\si_2;v_1}(3^4)=-(-1)^{g_1}
2\al_1\big(\al_1^2\!-\!\al_3^2\big)\big(\al_1\!+\!\al_3\big)
I_{2;g_1}\!\big(\al_1\!-\!\al_3,2\al_1,\al_1\!+\!\al_3\big),\\
(-1)^{\fs_{v_2}}\Cntr_{\Ga_2,\si_2;v_2}(3^4)=-(-1)^{g_2}
2\al_3\big(\al_3^2\!-\!\al_1^2\big)\big(\al_3\!+\!\al_1\big)
I_{2;g_2}\!\big(\al_3\!-\!\al_1,2\al_3,\al_3\!+\!\al_1\big).
\end{gather*}
By~\e_ref{EquivLocal_e}, the contributions of the middle two diagrams 
to~\e_ref{EquivInt_e} with $d\!=\!4$ are~thus
\BE{d4c23_e}\begin{split}
\Cntr_{\Ga_2,\si_2}\!(3^4)&=-(-1)^{(g-1)/2}
I_{2;g_1}\!\big(\al_1\!-\!\al_3,2\al_1,\al_1\!+\!\al_3\big)
I_{2;g_2}\!\big(\al_3\!-\!\al_1,2\al_3,\al_1\!+\!\al_3\big),\\
\Cntr_{\Ga_3,\si_3}\!(3^4)&=
-(-1)^{(g-1)/2}I_{2;g_1}\!\big(\al_1\!+\!\al_3,2\al_1,\al_1\!-\!\al_3\big)
I_{2;g_2}\!\big(\al_1\!+\!\al_3,2\al_3,\al_3\!-\!\al_1\big);
\end{split}\EE
the second expression is obtained from the first by replacing $\al_3$ by~$-\al_3$
and using the second identity in~\e_ref{Isymm2}.
In the case of the last diagram in Figure~\ref{Figg=1d=4},
\e_ref{ContrEC_e},  \e_ref{Contr3V_e}, \e_ref{EtoLa_e},
 and Lemma~\ref{LMHodge} give
\begin{gather*} 
\Cntr_{\Ga_4,\si_4;e_1}=\frac{-1}{4\al_1\al_3(\al_1\!+\!\al_3)^2}\,, \qquad
\Cntr_{\Ga_4,\si_4;e_2}=\frac{1}{4\al_1\al_3(\al_1\!-\!\al_3)^2}\,,\\
(-1)^{\fs_{v_1}}\Cntr_{\Ga_4,\si_4;v_1}(3^4)=-(-1)^{g_1}
4\al_1^2\big(\al_1^2\!-\!\al_3^2\big)
I_{2;g_1}\big(\al_1\!-\!\al_3,\al_1\!+\!\al_3,2\al_1\big),\\
(-1)^{\fs_{v_2}}\Cntr_{\Ga_4,\si_4;v_2}(3^4)=-(-1)^{g_2}
4\al_3^2\big(\al_3^2\!-\!\al_1^2\big)
I_{2;g_2}\big(\al_3\!-\!\al_1,\al_3\!+\!\al_1,2\al_3\big).
\end{gather*}
By~\e_ref{EquivLocal_e}, the contribution of the last diagram 
to~\e_ref{EquivInt_e} with $d\!=\!4$ is~thus
\BE{d4c4_e}
\Cntr_{\Ga_4,\si_4}\!(3^4)=
(-1)^{(g-1)/2}I_{2;g_1}\!\big(\al_1\!-\!\al_3,\al_1\!+\!\al_3,2\al_1\big)
I_{2;g_2}\!\big(\al_3\!-\!\al_1,\al_1\!+\!\al_3,2\al_3\big).\EE
The genus~$g$ degree~4 real GW-invariant of $(\P^3,\tau_4)$ 
with 4~conjugate pairs of point constraints is the sum of 
the three expressions in~\e_ref{d4c23_e} and~\e_ref{d4c4_e} 
over all $g_1,g_2\!\in\!\Z^{\ge0}$ with $g_1\!+\!g_2\!=\!(g\!-\!1)/2$.
In particular, we obtain the numbers $\GW_{1,d}^{\phi}$, $\GW_{3,d}^{\phi}$, 
and $\GW_{5,d}^{\phi}$ listed in the $d\!=\!4$ column of Table~\ref{TabInvReal}.
\end{eg}

\subsection{Implications for Hodge integrals}
\label{Hurwitz_subs}

We put the Hodge integrals $I_{1;g}$ and $I_{2;g}$ into the generating functions
$$F_1(u_1,u_2,u_3;t)=\sum_{g=0}^{\i} I_{1;g}(u_1,u_2,u_3)t^{2g}, \qquad
F_2(u_1,u_2,u_3;t)=\sum_{g=0}^{\i} I_{2;g}(u_1,u_2,u_3)t^{2g}\,.$$

\begin{prp}\label{Hodge_prp}
With notation as above,
\begin{gather}
F_1(x\!+\!y,x,y;t)= \frac{\sin(t/2)}{t/2}\,,\label{F1}\\
\begin{split}
&\tfrac{x+y}{2x-y}F_1(x,x\!+\!y,y;t)F_2(x,x\!-\!y,-y;t)
+\tfrac{x+y}{2y-x}F_1(y,x\!+\!y,x;t)F_2(-y,x\!-\!y,x;t)\\
&-F_1(x,x\!-\!y,-y;t)F_2(x,x\!+\!y,y;t)
-F_1(-y,x\!-\!y,x;t)F_2(y,x\!+\!y,x;t)
=\bigg(\frac{\sin(t/2)}{t/2}\bigg)^{\!5}\,,
\end{split}\label{F12}\\
\begin{split}
F_2(x\!+\!y,x,y;t)F_2(x\!-\!y,x,-y;t)
&+F_2(x\!+\!y,y,x;t)F_2(x\!-\!y,-y,x;t)\\
&- F_2(x,y,x\!+\!y;t)F_2(x,-y,x\!-\!y;t)
=\bigg(\frac{\sin(t/2)}{t/2}\bigg)^{\!8}\,.\label{F2}
\end{split}
\end{gather}
\end{prp}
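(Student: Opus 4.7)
The plan is to derive each of~\e_ref{F1}--\e_ref{F2} by equating two expressions for real Gromov--Witten invariants of $(\P^3,\tau_4)$ in degrees $d=1,3,4$: the equivariant localization formulas of Examples~\ref{Exd1}, \ref{Exd3}, \ref{Exd4}, and the Fano-type decomposition~\e_ref{FanoGV_e} of Theorem~\ref{main_thm} combined with Castelnuovo-style vanishing of the relevant enumerative invariants. Since $\dim\ov\cM_{g,1}=3g-2$ and $\dim\ov\cM_{g,2}=3g-1$, dimension counting shows that $I_{1;g}$ and $I_{2;g}$ are homogeneous of total degree~$0$ in $(u_1,u_2,u_3)$; hence any identity verified for the particular two-parameter family of torus-weight substitutions extends to arbitrary formal $(x,y)$.

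For~\e_ref{F1}: lines in $\P^3$ are rational, so $\E^{\P^3,\tau_4}_{h,\ell}(H^3)=0$ for $h\ge 1$, while $\E^{\P^3,\tau_4}_{0,\ell}(H^3)=1$. With $c_1(\ell)=4$, formulas~\e_ref{FanoGV_e} and~\e_ref{Cdfn_e} give $\sum_{g'\ge 0}\GW^{\P^3,\tau_4}_{2g',\ell}(H^3)\,t^{2g'}=\sinh(t/2)/(t/2)$. Equating this with~\e_ref{d1GW_e}, substituting $x=\al_1-\al_3$, $y=\al_1+\al_3$ so that $2\al_1=x+y$, and absorbing the sign $(-1)^{g'}$ via $\sin(\fI s/2)=\fI\sinh(s/2)$ (equivalently $t\leftrightarrow\fI t$) produces~\e_ref{F1}.

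For~\e_ref{F12} and~\e_ref{F2}: Castelnuovo's bound for smooth irreducible curves in $\P^3$, combined with the observation that $k\ge 4$ general points of $\P^3$ do not lie in any plane, gives $\E^{\P^3,\tau_4}_{h,3\ell}=0$ for $h\ge 1$ and $\E^{\P^3,\tau_4}_{h,4\ell}=0$ for $h\ge 2$; the surviving values $\E^{\P^3,\tau_4}_{0,3\ell}=-1$ and $\E^{\P^3,\tau_4}_{1,4\ell}=-1$ are recorded in Table~\ref{TabInvReal}. By~\e_ref{FanoGV_e} with $c_1(3\ell)=12$ and $c_1(4\ell)=16$, the parity-compatible GW sums collapse to single terms and evaluate, respectively, to $-(\sinh(t/2)/(t/2))^{5}$ and $-(\sinh(t/2)/(t/2))^{8}$. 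On the other side, summing the four contributions~\e_ref{d3c12_e}--\e_ref{d3c34_e} (respectively the three surviving contributions~\e_ref{d4c23_e}--\e_ref{d4c4_e}, after the eight labelings of the square diagram in Example~\ref{Exd4} cancel in pairs by~\cite[Corollary~4.8]{RealGWsIII}) against $t^{2g'}$ over $g_1+g_2=g'$ converts each summand into a product $F_i(\cdots;\fI t)F_j(\cdots;\fI t)$. Under the substitution $x=\al_1-\al_3$, $y=\al_1+\al_3$, the rational prefactor $\pm 2\al_1/(3\al_3\mp\al_1)$ from~\e_ref{d3c34_e} becomes $\mp(x+y)/(2x\mp y)$, and the symmetries~\e_ref{Isymm}--\e_ref{Isymm2} reorder the arguments of each $F_1$ and $F_2$ to match the four terms of~\e_ref{F12} (respectively the three terms of~\e_ref{F2}) exactly. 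Setting the two expressions for the GW-sum equal and passing back from $\fI t$ to $t$ yields the claimed identities.

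The algebraic bookkeeping in matching arguments via~\e_ref{Isymm}--\e_ref{Isymm2} is routine; the substantive input is the Castelnuovo vanishing of $\E^{\P^3,\tau_4}_{h,d\ell}$ above the stated thresholds, which is precisely what collapses the sum~\e_ref{FanoGV_e} to a single non-zero term and thereby identifies a specific combination of Hodge-integral generating functions with a pure power of $\sin(t/2)/(t/2)$. The subtlest step is ensuring that no high-genus contributions arise from simple real maps whose image happens to lie on a special surface such as a quadric containing the constraint points; this is excluded by the classical numerical Castelnuovo bound together with the $J$- and $\f$-genericity built into Theorem~\ref{main_thm}\,(1).
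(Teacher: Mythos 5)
Your proposal follows essentially the same route as the paper's proof: compute the degree $1,3,4$ real GW-invariants by the localization formulas of Examples~\ref{Exd1}--\ref{Exd4}, use Castelnuovo-type vanishing together with the base values $\E^{\P^3,\tau_4}_{0,\ell}=1$, $\E^{\P^3,\tau_4}_{0,3\ell}=-1$, $\E^{\P^3,\tau_4}_{1,4\ell}=-1$ to collapse~\e_ref{FanoGV_e} to a single power of $\sin(t/2)/(t/2)$, and match the two sides via the substitution in $\al_1\pm\al_3$ and the symmetries~\e_ref{Isymm}--\e_ref{Isymm2}. The minor differences (your swapped choice of $x,y$, the homogeneity remark, and the cautionary note about curves on special surfaces) are cosmetic and do not affect correctness.
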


\begin{proof}
By the $n\!=\!1$ case of \cite[Examples~6.3]{Teh} or the $g\!=\!0$ case of Example~\ref{Exd1}, 
$$\E^{\P^3,\tau_4}_{0,1}(H^3)=\GW^{\P^3,\tau_4}_{0,1}(H^3)=1.$$
Since every degree~1 curve in~$\P^3$ is of genus~0, $\E^{\P^3,\tau_4}_{g,1}(H^3)\!=\!0$
for every $g\!\in\!\Z^+$.
Along with~\e_ref{FanoGV_e} and~\e_ref{Cdfn_e}, 
these two observations~give
$$\sum_{g=0}^{\i}(-1)^g\GW^{\P^3,\tau_4}_{2g,1}(H^3)t^{2g}
=\sum_{g=0}^{\i}\ti{C}_{0,\ell}^{\P^3}(g)(\fI t)^{2g}
=\bigg(\frac{\sin(t/2)}{t/2}\bigg)^{\!0-1+4/2}\,.$$
Combining this statement with~\e_ref{d1GW_e} and setting $x\!=\!\al_1\!+\!\al_3$ and 
$y\!=\!\al_1\!-\!\al_3$, we obtain~\e_ref{F1}.\\

By the $n\!=\!1$ case of \cite[Examples~6.4]{Teh} or the $g\!=\!0$ case of Example~\ref{Exd3}, 
$$\E^{\P^3,\tau_4}_{0,3}(H^3,H^3,H^3)=\GW^{\P^3,\tau_4}_{0,3}(H^3,H^3,H^3)=-1.$$
By~\cite[Example 4.11]{RealGWsIII} or the $g\!=\!1$ case of Example~\ref{Exd4}, 
$$\E^{\P^3,\tau_4}_{1,4}(H^3,H^3,H^3,H^3)=\GW^{\P^3,\tau_4}_{1,4}(H^3,H^3,H^3,H^3)=-1.$$
By the Castelnuovo bound, every degree~$3,4$ curve of genus~2 or higher lies in a~$\P^2$
and~so
$$\E^{\P^3,\tau_4}_{2g,3}(H^3,H^3,H^3),
\E^{\P^3,\tau_4}_{2g+1,4}(H^3,H^3,H^3,H^3)=0
\qquad\forall~g\!\in\!\Z^+.$$
Along with~\e_ref{FanoGV_e} and~\e_ref{Cdfn_e}, 
these three observations~give
\begin{equation*}\begin{split}
\sum_{g=0}^{\i}(-1)^g\GW^{\P^3,\tau_4}_{2g,3}(H^3,H^3,H^3)t^{2g}
&=-\!\sum_{g=0}^{\i}\ti{C}_{0,3\ell}^{\P^3}(g)(\fI t)^{2g}
=-\bigg(\frac{\sin(t/2)}{t/2}\bigg)^{\!0-1+12/2}\,,\\
\sum_{g=0}^{\i}(-1)^g\GW^{\P^3,\tau_4}_{1+2g,4}(H^3,H^3,H^3,H^3)t^{2g}
&=-\!\sum_{g=0}^{\i}\ti{C}_{1,4\ell}^{\P^3}(g)(\fI t)^{2g}
=-\bigg(\frac{\sin(t/2)}{t/2}\bigg)^{\!1-1+16/2}\,.
\end{split}\end{equation*}
Combining these two statements with the conclusions of Examples~\ref{Exd3}
and~\ref{Exd4} and setting $x\!=\!\al_1\!+\!\al_3$ and $y\!=\!\al_1\!-\!\al_3$, 
we obtain~\e_ref{F12} and~\e_ref{F2}.
\end{proof}

The strategy in the proof of Proposition~\ref{Hodge_prp} can be applied 
with complex GW-invariants
using \cite[Theorem~1.5]{FanoGV}.
The $d\!=\!1$ case would then give
\BE{F1sq} F_1(u,x,y;t)F_1(u,u\!-\!x,u\!-\!y;t)=\left(\frac{\sin(t/2)}{t/2}\right)^{\!2}\,.\EE
The $u\!=\!x\!+\!y$ case of this identity recovers~\e_ref{F1}.\\

It is straightforward to see that
$$F_1\big(u_1,u_2,u_3;t\big)=F_1\big(1,u_2/u_1,u_3/u_1;t\big), \qquad 
F_2\big(u_1,u_2,u_3;t\big)=F_2\big(u_1/u_3,u_2/u_3,1;t\big)$$
By Mumford's relation \cite[(5.3)]{Mu}, 
\begin{equation*}\begin{split}
F_1(-x,x,-2x;t)
&=1+\!\sum_{g=1}^{\i}(-1)^g t^{2g}\!\int_{\ov\cM_{g,1}}\!\!
 \frac{\La(-2)}{(-1)(-1\!-\!\psi_1)}
=1+\!\sum_{g=1}^{\i}\sum_{i=0}^g(-2)^i\!\int_{\ov\cM_{g,1}}\!\!\!\!\psi_i^{2g-2+i}\la_{g-i}\,.
\end{split}\end{equation*}
Thus, the $y\!=\!-2x$ case of~\e_ref{F1} is equivalent to the $k\!=\!-2$ case
of \cite[Theorem 2]{FP}.\\

\noindent
The coefficients of $F_1(1,x,y;t)$ are rational symmetric functions in $x,y$.
Thus, $F_1(1,x,y;t)$ is a function of $x\!+\!y$, $xy$, and~$t$. 
However, it does not appear to depend on~$xy$.

\begin{cnj}\label{F1_cnj}
The function $F_1\!=\!F_1(u_1,u_2,u_3;t)$ depends only on $u_1$, $u_2\!+\!u_3$, and~$t$.
\end{cnj}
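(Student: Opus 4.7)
Since $F_1$ is symmetric in its second and third arguments by~\e_ref{Isymm}, we can view $F_1(u_1,u_2,u_3;t)$ as a function of $u_1$, $s\!:=\!u_2\!+\!u_3$, $p\!:=\!u_2u_3$, and~$t$; the conjecture asserts that $\prt_p F_1\!\equiv\!0$. A direct genus-one calculation, using $\La(u)\!=\!u\!-\!\la_1$ and $\la_1^2\!=\!0$ on~$\ov\cM_{1,1}$, yields $I_{1;1}(u_1,u_2,u_3)\!=\!-(u_2\!+\!u_3)/(24u_1)$, confirming the conjecture in that case and suggesting that an intersection-theoretic cancellation mechanism is at work in every genus.

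My first plan of attack would be to exploit Mumford's relation $c(\bE)\,c(\bE^*)\!=\!1$, which on $\ov\cM_g$ is equivalent to the vanishing $e_k(x_1^2,\ldots,x_g^2)\!=\!0$ for $k\!\ge\!1$, where $x_1,\ldots,x_g$ are the Chern roots of~$\bE^*$. Writing $\de\!=\!(u_2\!-\!u_3)/2$ and $y_i\!=\!s/2\!+\!x_i$, one has
$$
 \La(u_2)\La(u_3) = \prod_{i=1}^g\big(y_i^2\!-\!\de^2\big)
 = \sum_{k=0}^g(-\de^2)^k\,e_{g-k}\big(y_1^2,\ldots,y_g^2\big),
$$
so the conjecture reduces to the family of Hodge-integral identities
$$
 \int_{\ov\cM_{g,1}}\!\!\frac{\La(u_1)}{u_1(u_1\!-\!\psi_1)}\,e_j\big(y_1^2,\ldots,y_g^2\big) = 0
 \qquad\forall\,j\!\in\!\{0,\ldots,g\!-\!1\}.
$$
At $s\!=\!0$ these are precisely Mumford's relations applied to the Chern roots of~$\bE^*$. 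For $s\!\ne\!0$, one would expand $e_j(y_i^2)$ as a polynomial in~$s$ with coefficients in the tautological ring of $\ov\cM_g$ and attempt to show, via the string and dilaton equations combined with the known tautological relations in the Faber-Pandharipande or Getzler-Ionel style, that each coefficient pairs trivially against $\La(u_1)/(u_1(u_1\!-\!\psi_1))$.

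A complementary geometric approach is to apply the Mari\~no-Vafa formula for one-partition cubic Hodge integrals, proved by Liu-Liu-Zhou and by Okounkov-Pandharipande, specialized to the partition~$(1)$; the right-hand side of this formula is a combinatorial expression in Schur functions and Hurwitz data, and one expects the conjectured dependence on $s$ alone to emerge transparently after simplification. This route avoids the case analysis of the direct method but imports substantial machinery from topological-vertex theory and must be extended beyond the Calabi-Yau specialization $u_1\!+\!u_2\!+\!u_3\!=\!0$ to generic parameters.

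The hard part of the first approach is that Mumford's relation vanishes only the \emph{unweighted} symmetric functions $e_k(x_i^2)$, whereas $e_j(y_i^2)$ mixes these with weighted lower-order invariants of the Chern roots multiplied by powers of~$s$; disentangling the resulting tautological combinatorics appears to require tautological relations stronger than those readily available. The principal difficulty in the second approach is precisely the identification of our three-parameter $F_1$ with a quantity to which the Mari\~no-Vafa machinery applies in its generic, non-Calabi-Yau form; bridging this gap will likely be the key technical step.
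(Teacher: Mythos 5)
This statement is labeled a \emph{conjecture} in the paper, and the paper offers no proof of it: the authors only observe that the coefficients of $F_1(1,x,y;t)$, which are symmetric rational functions of $x$ and $y$ and hence functions of $x\!+\!y$ and $xy$, ``do not appear to depend on~$xy$'' in the cases they computed. The only facts actually established there are the identities~\e_ref{F1} and~\e_ref{F1sq}, which pin down $F_1$ on the locus $u_1\!=\!u_2\!+\!u_3$ and relate its values under $(x,y)\!\mapsto\!(u\!-\!x,u\!-\!y)$, but do not determine it on the full two-parameter family. So there is no argument in the paper to compare yours against, and your submission must be judged on whether it closes the conjecture on its own. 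It does not.

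What you do have is correct and worthwhile as far as it goes. The genus-one value $I_{1;1}\!=\!-(u_2\!+\!u_3)/(24u_1)$ is right, and the reformulation via $\La(u_2)\La(u_3)\!=\!\prod_i(y_i^2\!-\!\de^2)$ with $y_i\!=\!x_i\!+\!(u_2\!+\!u_3)/2$ correctly reduces the conjecture to the vanishing of $\int_{\ov\cM_{g,1}}\La(u_1)\,e_j(y_1^2,\ldots,y_g^2)/\big(u_1(u_1\!-\!\psi_1)\big)$ for $0\!\le\!j\!\le\!g\!-\!1$; already the $j\!=\!0$ case is the nontrivial assertion $\sum_{r=0}^g(-1)^r\!\int_{\ov\cM_{g,1}}\!\la_r\psi^{3g-2-r}\!=\!0$ (which one can check for $g\!=\!1,2$). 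But neither of your routes is carried out. The first stalls exactly where you admit it does: Mumford's relations kill the unweighted $e_k(x_i^2)$, and you supply no mechanism for the $s$-shifted classes $e_j(y_i^2)$. The second cannot work as stated, since the Mari\~no--Vafa formula governs only the Calabi--Yau slice $u_1\!+\!u_2\!+\!u_3\!=\!0$, which by homogeneity is a single point of the two-parameter family $F_1(1,x,y;t)$ --- on that slice the conjecture asserts only that $F_1$ is constant, so no amount of simplification there can see the dependence on $xy$ off the slice. The genuine gap is therefore that the central identities are conjectured rather than proved; your write-up is a reasonable research plan and supporting evidence, consistent with the statement's status in the paper as an open conjecture, but it is not a proof.
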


If this is the case, \e_ref{F1sq} would imply that 
$$F_1(u_1,u_2,u_3;t)= 
\frac{\sin(t/2)}{t/2}\exp\Big(f_1\big((u_2\!+\!u_3)/u_1\!-\!1;t\big)\Big)$$
for some $f_1\!\in\!\Q(z)[[t^2]]$ such that $f_1(-z;t)\!=\!-f_1(z;t)$.\\

The coefficients of $F_2(x,y,1;t)$ are rational symmetric functions in $x,y$.
However, they appear to depend on $x\!+\!y$ and~$xy$ and not just up to 
a fixed uniform scaling factor.
On the other hand, the next statement appears plausible.

\begin{cnj}\label{F2_cnj}
With notation as above,
\BE{F2cnj_e}  F_2(x,y,x\!+\!y;t)F_2(x,-y,x\!-\!y;t)
 =-\left( \frac{\sin(t/2)}{t/2}\right)^{\!8} \frac{(x^2\!-\!y^2)^2}{x^2y^2}\,.\EE
\end{cnj}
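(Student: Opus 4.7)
The plan is to follow the two-step strategy that proved~\e_ref{F1}, \e_ref{F12}, and~\e_ref{F2}: compute a single real GW-invariant of $(\P^3,\tau_4)$ in two ways, once by the equivariant localization scheme of Section~\ref{LowDegree_subs} and once by Theorem~\ref{main_thm} combined with Castelnuovo-type vanishing of $\E^{\P^3,\tau_4}_{h,d\ell}$. The target is a configuration whose equivariant side, after subtracting the contributions already controlled by~\e_ref{F1}--\e_ref{F2}, isolates the product $F_2(x,y,x+y;t)F_2(x,-y,x-y;t)$ up to a rational prefactor in~$x,y$, and whose enumerative side is forced to reduce to an explicit power of $\sin(t/2)/(t/2)$ producing the RHS of~\e_ref{F2cnj_e}.

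First I would extend the analysis of Example~\ref{Exd4} to the degrees $d\!\ge\!5$ tabulated in~\cite{RealGWvsEnumApp}, searching for a pair $(g,d)$ at which the admissible pairs $(\Ga,\si)$ contributing to~\e_ref{EquivInt_e}, after the pairwise cancellation of simple-edge graphs from \cite[Corollary~4.8]{RealGWsIII}, contain exactly one new double-loop graph of the shape~$\Ga_4$ in Figure~\ref{Figg=1d=4}. By~\e_ref{Contr3V_e}--\e_ref{ContrER_e}, \e_ref{EtoLa_e}, and Lemma~\ref{LMHodge}, such a graph's contribution is a rational function of~$\al_1,\al_3$ times $F_2(\al_1\!-\!\al_3,\al_1\!+\!\al_3,2\al_1;t)F_2(\al_3\!-\!\al_1,\al_3\!+\!\al_1,2\al_3;t)$, which by~\e_ref{Isymm} becomes the LHS of~\e_ref{F2cnj_e} after the substitution $x\!=\!\al_1\!+\!\al_3$, $y\!=\!\al_1\!-\!\al_3$. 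The vertex Euler factors $4\al_1^2(\al_1^2\!-\!\al_3^2)\cdot 4\al_3^2(\al_3^2\!-\!\al_1^2)$ and the doubled edge denominators $16\al_1^2\al_3^2(\al_1\!+\!\al_3)^2(\al_1\!-\!\al_3)^2$ are expected to combine with the equivariant insertion weights of the chosen invariant to produce the factor $(x^2\!-\!y^2)^2/(x^2y^2)$ on the RHS of~\e_ref{F2cnj_e}.

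Second, I would evaluate the same real GW-invariant enumeratively. By~\e_ref{FanoGV_e} and~\e_ref{Cdfn_e}, the generating function in~$g$ of the relevant $\GW^{\P^3,\tau_4}_{g,d\ell}$ equals a $\Z$-linear combination of powers $(\sin(t/2)/(t/2))^{h-1+2d}$ with coefficients the integers $\E^{\P^3,\tau_4}_{h,d\ell}$. For $(g,d)$ at which Castelnuovo's bound, the parity constraint $\E^{\P^3,\tau_4}_{h,d\ell}\!=\!0$ for $d\!-\!h\!\in\!2\Z$, and the vanishing on low genera force all but one~$h$ to give~$0$---as in the degree~$3$ and~$4$ cases used in Proposition~\ref{Hodge_prp}---the generating function collapses to a single explicit term, selecting the exponent~$8$ (e.g.\ $d\!=\!4$, $h\!=\!1$, or a weighted combination at higher degree) with the sign~$-1$ and yielding~\e_ref{F2cnj_e}.

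The hard step is the first one: at the accessible degrees, equivariant localization generically produces mixed linear combinations of $F_1\!\cdot\!F_2$ and $F_2\!\cdot\!F_2$ products in which a single $F_2\!\cdot\!F_2$ factor is not cleanly separable, and the specific rational prefactor $(x^2\!-\!y^2)^2/(x^2y^2)$ appearing in~\e_ref{F2cnj_e} is not produced by any single graph with point-only insertions. A promising complementary route is to first establish Conjecture~\ref{F1_cnj}: once $F_1$ is known to depend only on $(u_1,u_2+u_3,t)$, the four-term relation~\e_ref{F12} collapses to a shorter identity among $F_1\!\cdot\!F_2$ products, which combined with~\e_ref{F2} becomes a linear system whose unique remaining unknown is precisely $F_2(x,y,x+y;t)F_2(x,-y,x-y;t)$. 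Specialization at degenerate values such as $y\!=\!0$ or $y\!=\!\pm x$, where several $F_2$-factors simplify via~\e_ref{Isymm} and~\e_ref{Isymm2} and the RHS of~\e_ref{F2cnj_e} has obvious poles or zeros, would provide further consistency checks and (one hopes) pin down the rational prefactor on the RHS completely.
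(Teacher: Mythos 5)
The statement you are addressing is a \emph{conjecture}: the paper offers no proof of~\e_ref{F2cnj_e}, only the remark that it ``appears plausible'' together with the observation that, combined with~\e_ref{F2}, it would yield a further identity. Your submission is likewise not a proof but a research plan, and the plan does not close the essential gap. Under $x\!=\!\al_1\!+\!\al_3$, $y\!=\!\al_1\!-\!\al_3$ and the symmetry~\e_ref{Isymm}, the left-hand side of~\e_ref{F2cnj_e} is exactly the generating function of the contributions~\e_ref{d4c4_e} of the single double-edge graph~$\Ga_4$ in the $d\!=\!4$ localization computation. The difficulty is that the two-sided comparison underlying Proposition~\ref{Hodge_prp} only ever equates the \emph{total} of all graph contributions (here the three-term sum of~\e_ref{d4c23_e} and~\e_ref{d4c4_e}) with a power of $\sin(t/2)/(t/2)$; it cannot isolate the contribution of one graph. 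Your step~2, as written (``$d\!=\!4$, $h\!=\!1$, exponent~$8$''), therefore reproduces~\e_ref{F2} verbatim rather than the conjecture, and your step~1 --- finding a degree at which \emph{only} a double-loop graph survives after the cancellations of \cite[Corollary~4.8]{RealGWsIII} --- is asserted to exist but not exhibited; you yourself concede that at accessible degrees the localization output is an inseparable mixture of $F_1\!\cdot\!F_2$ and $F_2\!\cdot\!F_2$ products.

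The fallback route via Conjecture~\ref{F1_cnj} also does not close the argument: even granting that conjecture, the relations~\e_ref{F12} and~\e_ref{F2} give two equations in more than two unknown products of $F_2$-factors evaluated at distinct argument triples, so the ``linear system'' you invoke is not visibly determined, and the specializations $y\!=\!0$, $y\!=\!\pm x$ sit precisely at the poles and zeros of the proposed right-hand side, where the $F_2$-factors themselves degenerate (note $I_{2;0}(u_1,u_2,u_3)\!=\!(u_1\!+\!u_2)u_3/(u_1u_2)$ already carries the prefactor $(x^2\!-\!y^2)/(xy)$ you are trying to explain). These checks can corroborate the conjecture but cannot establish it. In short: the approach is a sensible heuristic consistent with how~\e_ref{F1}--\e_ref{F2} were derived, but no step of it is carried out, and the key obstruction --- extracting an individual graph contribution from a localization total --- is identified without being overcome.
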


Along with \e_ref{F2}, \e_ref{F2cnj_e} would imply~that 
\begin{equation*}\begin{split}
F_2(x\!+\!y,x,y;t)F_2(x\!-\!y,x,-y;t)
&+F_2(x\!+\!y,y,x;t)F_2(x\!-\!y,-y,x;t)\\
&\hspace{.5in}
=-\left( \frac{\sin(t/2)}{t/2}\right)^{\!8}\frac{x^4\!-\!3x^2y^2\!+\!y^4}{x^2y^2}\,.
\end{split}\end{equation*}

\vspace{.5in}

\noindent
{\it Department of Mathematics, Stony Brook University, Stony Brook, NY 11794-3651\\
niu@math.stonybrook.edu, azinger@math.stonybrook.edu}

\end{document}